\newcommand{\Diff}{{\mathrm {Diff}}}
\newcommand{\R}{\mathbb{R}}
\newcommand{\Z}{\mathbb{Z}}
\newcommand{\HH}{\mathbb{H}}
\newcommand{\eps}{\varepsilon}
\newcommand{\bv}{{\bf{v}}}
\newcommand{\bw}{{\bf{w}}}
\newcommand{\bt}{{\bf{t}}}
\newcommand{\id}{\mathrm{Id}}
\newcommand {\p}{\partial}
\newcommand {\wt}{\widetilde}
\newcommand {\lk}{\operatorname{lk}}
\newcommand{\Distr}{\operatorname{Distr}}
\newcommand{\Cont}{\operatorname{Cont}}
\newcommand{\supp}{\operatorname{supp}}
\newcommand{\Int}{\operatorname{Int}}
\newcommand{\Map}{\operatorname{Map}}
\newcommand{\const}{\operatorname{const}}
\def\qed{\hbox{}\hfill\framebox{}}
\newtheorem{Thm}{Theorem}[section]
\newtheorem{Lem}[Thm]{Lemma}
\newtheorem{Cor}[Thm]{Corollary}
\newtheorem{Prop}[Thm]{Proposition}
\newtheorem{Rem}[Thm]{Remark}
\begin{document}
\title{Topologically Trivial Legendrian Knots}
\author{Y. Eliashberg\thanks{Partially supported by the NSF grants 
DMS-0707103 and DMS 0244663}\\Stanford University\\USA \and M. Fraser\\National Polytechnical Institute\\Mexico}
\date{November, 2008}

\maketitle
This paper deals with topologically trivial Legendrian knots in tight and overtwisted contact 3-manifolds.
The first parts (Sections~\ref{sec:prelim}-\ref{sec:main})  
contain a thorough exposition of the proof of the classification of topologically
trivial Legendrian knots (i.e. Legendrian knots bounding embedded
2-disks) in tight contact 3-manifolds (Theorem~\ref{thm:main}), and, in particular, in the standard contact $S^3$. 
These parts were essentially written more than 10 years ago, but only a short version
\cite{[EF]}, without the detailed proofs, was published. In that paper we also briefly  discussed   Legendrian knots in overtwisted contact 3-manifolds. The final   part of the present paper (Section \ref{sec:twist}) contains a more systematic discussion of the overtwisted case.
  In \cite{[EF]} Legendrian knots in overtwisted manifolds were divided into two classes: \emph{exceptional}, i.e. those with tight complement, and the complementary class of \emph{loose} ones. Loose knots can be {\it coarsely}, i.e. up to a global coorientation preserving
 contact diffeomorphism,  classified (see Section \ref{sec:twist} and also \cite{[D2]}) using the classification of overtwisted contact structures from \cite{[E5]}.   On the other hand, Giroux-Honda's  classification of tight contact structures on solid tori (see 
 \cite{[Gi3], [Ho1],[Ho2]})
allows us to  completely coarsely classify topologically trivial exceptional knots in $S^3$. In particular, we show the latter exist for only one overtwisted contact structure on $S^3$.  
 
Since the paper  \cite{[EF]}, several new techniques
(notably the Giroux--Honda method of convex surfaces, dividing curves and Legendrian bypasses) were developed which provide some shortcuts to the results of that paper, at least for knots in $S^3$. However, we think that our explicit geometric methods  may still have more than
just  a historic interest.  Theorem~\ref{thm:main}
 asserts that topologically trivial Legendrian knots in a tight contact manifold, which are isotopic as
framed knots are Legendrian isotopic.
A special case of this result was proved in \cite{[E1]} (for topologically trivial
 Legendrian knots with maximal possible value of the Thurston-Bennequin invariant, i.e.
$(tb,r)=(-1,0)$; see
Section~\ref{sec:invts} below). The result itself was then stated in \cite{[E2]}, where
an analogous theorem for  transversal
knots was proved.
  The proof in the present paper follows the
general scheme  of the proof
 first given in 
\cite{[F1]}.  The same classification result does not hold for non-trivial knot type, in other words the so-called {\it regular isotopy} of topologically non-trivial Legendrian knots does not imply their Legendrian isotopy. The first example of this kind was  established  by Yu.~Chekanov in \cite{[Ch2]}   using a  differential graded algebra of    a Legendrian knot, a new invariant inspired by the theory of holomorphic curves, which Chekanov defined combinatorially. 

Since the late 90's, many new invariants of Legendrian isotopy have been defined, starting with the Legendrian contact homology algebra (see \cite {[Ch2],[Ch3]} and \cite{[E9],[EGH]}) and several variations of it  (see \cite{[Sa], [EES1], [EtNS], [Ng]}.  Following some ideas from \cite{[E7]}, a different type  ``decomposition invariant'' was defined by  
Yu.~Chekanov and P. Pushkar in \cite{[ChP]} for their proof of Arnold's four-cusp conjecture. 
More recently  P. Ozsv\'ath, Z. Szab\'o and D. Thurston defined a new invariant for Legendrian knots in $S^3$ using a combinatorial version of knot Floer homology \cite{[OST]}.
  Besides the result on the unknot,   Legendrian knots are now  classified in some other topological isotopy classes of knots  in tight contact 3-manifolds,  
 Namely, J. Etnyre and K. Honda in \cite{[EtHo1]} classified Legendrian   torus knots and figure-eight knots.  F. Ding and H. Geiges in \cite{[DG]}  extended Etnyre-Honda results for other special classes of Legendrian knots and links.
 
 The problem of classification of Legendrian  (and Lagrangian)
knots
 was first explicitly formulated by V. I. Arnold in \cite{[A1]}. However,     problems
 related to Legendrian isotopy were studied earlier (see, for instance,
\cite{[A2], [A3], [A5],  [E4],  [B], [Gr1]}, and also papers \cite{[E7], [E8]}, which
were   written much earlier than they appeared). 
Legendrian knots nowadays  is a very active subject, and we will not even attempt
 here to list all relevant results. We refer the reader to Etnyre's paper \cite{[Et2]} for a survey of the status of Legendrian isotopy problem back in 2003.  The latest developments of the theory include, as we already mentioned
 above, applications of Heegaard Floer homology theory, as well as new SFT-inspired  Legendrian  isotopy invariants. 
\medskip
\tableofcontents

\section{Preliminaries}\label{sec:prelim}
Today there exist several good references  for the basic facts and notions about  contact 3-manifolds  
  (see, for instance, the book \cite{[Ge]} of H. Geiges). However, to fix the notation and terminology we review the necessary introductory information in this section. 
 
\subsection{Contact structures}\label{sec:contact-basic}
Let us recall that a {\it contact structure} on a connected $3$-manifold $M$ is a
completely non-integrable
tangent plane field $\xi$. Such a pair $(M,\xi)$ is called a
 {\it contact manifold}. The contact structure $\xi$ is said to be
{\it co-orientable}
when the bundle $\xi$ is co-orientable in $TM$.
In this case $\xi$ can be defined (as $\xi = \mbox{ker}(\alpha)$) by a global $1$-form $\alpha$ which is
called    a  {\it contact form}. In the present paper,
{\it we will henceforth assume all contact structures considered to be
co-orientable, unless otherwise specified}. The main results of the paper can be easily reformulated for the
  non-cooriented case.

Note that the orientation of $M$ defined by the volume form
$\alpha\wedge d\alpha$ depends only on the contact structure $\xi=\{\alpha=0\}$, and not on the choice of the contact form $\alpha$. When $M$'s orientation has been fixed a priori,
and happens to agree with that determined by $\xi$, $\xi$ is said to be
{\it positive};
when it disagrees, $\xi$ is said to be {\it negative}.
In the
present paper, {\it we will consider only positive contact structures};
i.e. we will always endow $M$ with the orientation determined by $\xi$.

Let  us denote by ${\Diff}_0(M,\xi)$
the connected component of the identity of the group
of {\it contactomorphisms} (i.e. group of $\xi$-preserving diffeomorphisms) of
$(M,\xi)$. This group acts transitively
 on points of any connected contact manifold (Darboux' theorem).
 Moreover, coordinates can
always be given locally so that $\xi = \{ dz=ydx\}$.
The space $\R^3$ endowed with the contact  structure $\xi = \{ dz=ydx\}$ is called
the {\it standard contact $3$-space}. 
The contact form $dz+\rho^2d\varpi$ in cylindrical coordinates defines on $\R^3$ a contact structure equivalent to the standard contact structure $\xi$.
On a closed manifold there are no deformations  of contact structures (J. Gray's
 theorem, \cite{[Gray]}):
{\it two contact structures on a closed contact manifold which are homotopic
in the class of contact structures are isotopic}.

It has proven useful (see \cite{[E2], [E3], [Gi1]}) to  divide contact
structures on 3-manifolds into two complementary classes: {\it tight} and
{\it overtwisted}. A contact structure $\xi$ is called {\it overtwisted} if
there exists an embedded 2-disk $D\subset M$ such that the boundary
$\partial D$ is tangent to $\xi$ while the disk $D$ is transversal to $\xi$
along $\partial D$. Equivalently, one can request that $D$ be everywhere tangent to $\xi$ along $\partial D$. As it is shown in \cite{[E3]} one can always find a disc in an overtwisted contact manifold such that the characteristic foliation on it (see
Section~\ref{sec:fol}) is given by Figure~\ref{fig-otwist}.

\begin{figure}[ht!]
\centerline{\epsfxsize=6cm \epsfbox{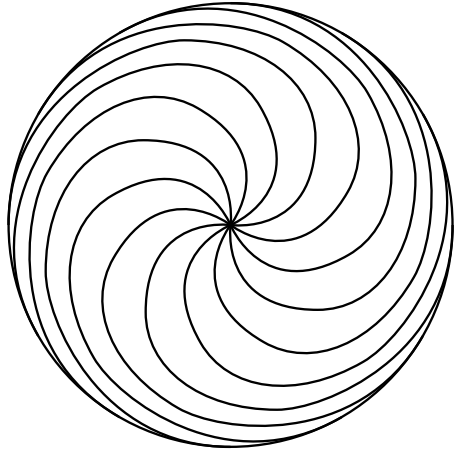}}\medskip
\caption{ An overtwisted
disk}\label{fig-otwist}\end{figure}

Non-overtwisted contact structures are called {\it tight}.
Most of this paper deals only with tight contact structures, however in
the final portion
(Section~\ref{sec:twist}) we discuss the situation for Legendrian
knots in overtwisted contact manifolds.

\subsection{Legendrian curves and characteristic foliation}\label{sec:fol}
A curve $L$ in a contact 3-manifold $(M,\xi)$ is called
{\it Legendrian} if
it is tangent to $\xi$. An oriented, closed, smooth Legendrian curve
 will be called a
{\it Legendrian knot}. In this paper we will study {\it topologically trivial }
Legendrian knots, by which we mean those Legendrian knots
 which bound embedded disks in $M$. We will sometimes use the expression
{\it Legendrian segment} to refer to a Legendrian embedding of a closed
segment of ${\bf R}$.

 According to a theorem of Rashevskii-Chow 
(see \cite{[Chow], [Ra]}), {\it  any
  embedded curve $\Gamma$ can be made Legendrian
by a $C^0$-small isotopy.} This statement    also holds  for families of curves, but not in a relative form: two isotopic Legendrian curves are not, necessarily, Legendrian isotopic. According to the Darboux-Weinstein theorem, {\it diffeomorphic Legendrian submanifolds have contactomorphic neighborhoods.}

Let us mention the two  standard ways of visualizing Legendrian curves in the
standard contact $\R^3$.  Let $L\subset \R^3$ be a Legendrian curve.
Its orthogonal projection to the $(x,y)$-coordinate plane is called the
 {\it Lagrangian projection}.\footnote{Alternatively, if instead
of standard coordinates we are using
cylindrical coordinates $(\rho,\varphi,z)$ on ${\bf R}^3$, with contact structure
defined  by the form $dz + \rho^2d\varphi$, then the orthogonal projection
to the $z=0$ plane would likewise be called Lagrangian projection.}
We will denote the projection by $p_{\rm Lag}$ and denote the image $p_{\rm Lag}(L)$
 by $L_{\rm Lag}$. The projection to the $(x,z)$-coordinate plane is
called the {\it front projection}.  We denote this projection by $p_{\rm Front}$ and call the image $L_{\rm Front}=
p_{\rm Front}(L)$ the {\it wavefront }  of $L$.

Let us observe that $L_{\rm Lag}$ is an immersed curve,
 because the contact planes are transversal
to the $z$-direction. The Legendrian curve $L$ can be reconstructed
 from $L_{\rm Lag}$ by adding
the coordinate $z=\int\limits_{L_{\rm Lag}}ydx.$ In particular, for a closed
$L$ we should have $\oint\limits_{L_{\rm Lag}}ydx=0$, i.e the algebraic area
bounded by the immersed curve $L_{\rm Lag}$ should be equal to $0$. Notice also that
 the Legendrian curve $L$ is embedded  iff   each self-intersection point
of $L_{\rm Lag}$ divides this curve into two parts of non-zero area.

On the other hand, the wavefront $L_{\rm
Front}$   may have singularities. If the
wavefront is smooth then it has to be a graph of a function $z=\alpha(x)$ defined on an
interval $I$ of the
$x$-axis. 
In the general case the front can be viewed as the graph
of a multivalued function. Generically, different branches join
pairwise in cusp-points, so that the wavefront near a singular point is diffeomorphic to
the semi-cubic parabola. In this generic case, therefore,
 the Legendrian curve $L$ can be reconstructed
from its wavefront by adding the $y$-coordinate equal to the slope of this multivalued
function. 
 Notice that the Legendrian curve $L$ is embedded iff each self-intersection point
of $L_{\rm Front}$ is transversal.


An important way in which Legendrian curves
arise is
as follows. Let $F\subset M$ be a 2-surface. If $M$ is transverse to $\xi$
then $\xi$  intersects $T(F)$ along a line field $K\subset T(F)$ which
integrates to a 1-dimensional foliation called the {\it
characteristic foliation} of $F$ and denoted $F_\xi$. The leaves of
$F_\xi$ are, of course, Legendrian curves.

Note that the foliation $F_\xi$ may
have singularities. A generic surface $F$ will be transversal to
$\xi$ except possibly at a discrete set of points where it is tangent.
 The characteristic foliation that results in this generic
case is a singular foliation with singularities  occurring
exactly at these isolated points of tangency
 (see \ref{sec:singularities} below).

The  characteristic foliation  $F_\xi$ determines a germ of contact structure
$\xi$ along $F$ (see \cite{[Gi2]}): a diffeomorphism between
 characteristic foliations  on surfaces $F$ and $\tilde F$
 extends as a contactomorphisms between the neighborhoods of
these surfaces.\footnote{
One should be more accurate in what the diffeomorphism means near singular points
\nobreak of the characteristic foliation, see \cite{[Gi2]} for the details.}

 On the other hand, if two characteristic foliations $F_\xi$ and $\tilde F_\xi$ are
{\it homeomorphic} then there exists (see \cite{[E5]})
 a  $C^0
$-perturbation $\hat F$ of the surface $\tilde F$ such that the characteristic foliations
$F_\xi$ and $\hat F_\xi$ are {\it diffeomorphic}.

\subsection{Singularities of the characteristic foliation}\label{sec:singularities}
\subsubsection{Elliptic  and hyperbolic singularities} \label{sec:ellhyp}
The index of the line field $K = \xi \cap T(F)$ at isolated singularities is well defined.
 Generically, it is equal to $+1$ or $-1$ and the
singular point is called respectively {\it elliptic} or {\it hyperbolic} in
these cases (see Figs.~\ref{fig-ellhyp}(a),(b) for elliptic, and Fig.~\ref{fig-ellhyp}(c) for hyperbolic).

\begin{figure}[hbt]
\centerline{\epsfxsize=10cm \epsfbox{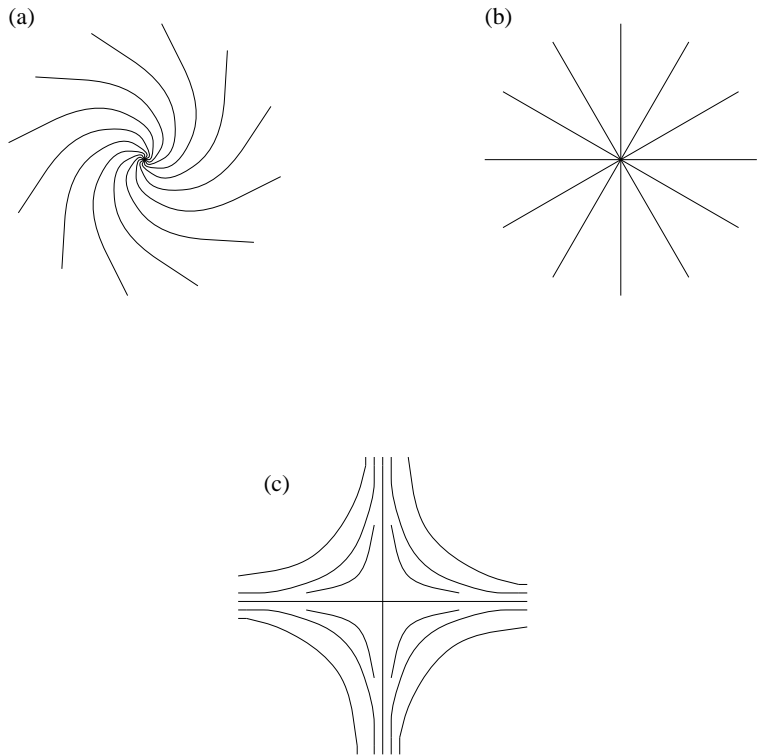}}\medskip
\caption{Elliptic singularities (top) and a
hyperbolic singularity (bottom)}
\label{fig-ellhyp}
\end{figure}

The two elliptic points shown in Figs.~\ref{fig-ellhyp}(a),(b)  are topologically
indistinguishable. Moreover, a surface can always be $C^1$-perturbed near an
elliptic point to achieve characteristic foliation of the (b)-type.
Near an elliptic point of (b)-type  one can choose Darboux cylindrical  coordinates
 $(\rho,\varphi,z)$
with the origin at
the singular point, so that the contact structure is defined by the contact form
$dz+\rho^2d\varphi$ and
 the surface $F$ is given by the equation $z=0$. We will always assume
in this paper
that  elliptic singularities have this form.

We also assume  that  characteristic
foliations we consider do not have separatrix connections between hyperbolic points.
This generic assumption  can be  achieved by a $C^\infty$-small perturbation
of the surface.

\subsubsection{The sign of a singular point} Suppose the surface $F$ is oriented.
Recall that $\xi$ is assumed to be co-oriented and $M$ oriented. So $F$ is
co-oriented in $M$. Also, $F_\xi$ is co-oriented in $F$ and so has a canonical
orientation determined by that of $F$.
 The singularities of $F_\xi$ will be called {\it positive} or {\it
negative} when the co-orientations of $F$ and $\xi$ respectively agree or
disagree at these points.
Since we are moreover assuming $\xi$ to be positive (i.e. $\alpha\wedge d\alpha
> 0$) it follows that the signs of elliptic singularities and the orientation
of $F_\xi$ are related:

\begin{Lem}
Positive elliptic
points are sources and negative elliptic points sinks of the
characteristic foliation $F_\xi$ of an oriented surface $F$ in $(M,\xi)$
(based on the assumption that $\xi$ is positive).
\end{Lem}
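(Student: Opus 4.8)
The plan is to reduce the statement to the local Darboux normal form established just above and then to read off every relevant orientation by a direct coordinate computation. Since being a source or a sink, and likewise the sign of the singularity, are unchanged under $C^1$-small perturbations of $F$, I may assume by the discussion preceding the lemma that the elliptic point is of $(b)$-type, so that there are Darboux cylindrical coordinates $(\rho,\varphi,z)$ centred at the singular point in which $\alpha=dz+\rho^2\,d\varphi$ and $F=\{z=0\}$. The foliation, the two co-orientations whose comparison defines the sign, and the canonical orientation of $F_\xi$ are all determined by $1$-jets along $F$, so it suffices to exhibit them in these coordinates.

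First I would identify the foliation and fix the ambient orientation. Restricting $\alpha$ to $TF$ gives $\alpha|_{TF}=\rho^2\,d\varphi$, so the leaves are the kernel lines $K=\ker(\rho^2\,d\varphi)=\operatorname{span}(\partial_\rho)$, i.e. the radial rays, confirming the elliptic picture. Since $\xi$ is positive, the orientation of $M$ is the one for which $\alpha\wedge d\alpha>0$; a short computation gives $\alpha\wedge d\alpha=2\rho\,d\rho\wedge d\varphi\wedge dz$, so (for $\rho>0$) the positive orientation of $M$ is $d\rho\wedge d\varphi\wedge dz=dx\wedge dy\wedge dz$. Next I would compare the two co-orientations at the singular point. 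Orienting $F$ by $dx\wedge dy$, and using the convention that the positive normal $n$ to $F$ makes $(n,e_1,e_2)$ a positive frame of $M$ for a positive frame $(e_1,e_2)$ of $F$, one finds the positive normal is $n=+\partial_z$. The co-orientation of $\xi$ is given by $\alpha$, and $\alpha(\partial_z)=1>0$, so the positive normal to $\xi$ is also $+\partial_z$. Thus the two co-orientations \emph{agree}, and the point is \emph{positive}. Finally, the induced co-orientation of $F_\xi$ in $F$ is the one for which $\alpha|_{TF}=\rho^2\,d\varphi>0$, so the positive conormal points in the $+\partial_\varphi$ direction; combining this with $dx\wedge dy=\rho\,d\rho\wedge d\varphi$ under the convention that (oriented tangent)$\,\wedge\,$(positive conormal) equals the orientation of $F$, the oriented tangent to the leaves is $+\partial_\rho$. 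Hence the leaves point radially \emph{outward} and the positive elliptic point is a \emph{source}.

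I would then observe that reversing the orientation of $F$ reverses its positive normal (now $-\partial_z$) while leaving that of $\xi$ unchanged, turning the point \emph{negative}, and simultaneously reverses the leaf orientation to $-\partial_\rho$, making the point a \emph{sink}; so the two cases correspond and the lemma holds in both. I expect the only genuine obstacle to be the bookkeeping of orientation conventions — in particular making sure the induced co-orientation of $F_\xi$ and the co-orientation of $F$ are the ones genuinely inherited from $\alpha$ and from the orientation of $M$. The sole place where positivity of $\xi$ enters is the sign of $\alpha\wedge d\alpha$, and this is exactly what couples the sign of the singularity to its being a source or a sink. Coordinate-freely, the clean reformulation is that the point is positive precisely when the orientation of $T_pF=\xi_p$ agrees with the symplectic orientation of $\xi_p$ defined by $d\alpha|_{\xi_p}$; in the normal form $d\alpha|_{\xi_0}=2\,dx\wedge dy$, so this is exactly the outward (source) orientation, and were $\xi$ negative the sign would flip, reversing the conclusion in accordance with the hypothesis.
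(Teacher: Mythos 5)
Your argument is correct, and it is essentially the argument the paper intends: the lemma is stated there without proof, as an immediate consequence of the $(b)$-type Darboux normal form $\alpha=dz+\rho^2\,d\varphi$, $F=\{z=0\}$ set up in the preceding paragraphs, which is exactly the model in which you carry out the computation. Your closing coordinate-free reformulation (the sign of the point compares the orientation of $\xi_p=T_pF$ induced by $F$ with the symplectic orientation of $d\alpha|_{\xi_p}$, and positivity of $\xi$ ties this to the outward/inward direction of the radial leaves) is the cleanest way to see that the source/sink dichotomy and the sign are coupled, and it also justifies your reduction step, since the sign of $d\alpha|_{\xi_p}$ against the area form is $C^1$-stable.
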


Notice that the sign of a hyperbolic point cannot be seen from the
$C^0$-topology of the oriented
characteristic foliation, because it is a $C^1$-, and not a $C^0$-invariant (see \cite{[Gi2]}).


\subsection{Legendrian isotopy}\label{sub:isoTrees}
\subsubsection{Legendrian isotopy vs. ambient contact isotopy}
 The following result is easily
shown using the relative version of Darboux--Gray's theorem.

\begin{Lem}\label{lem-gray}
Let $L_t , t\in[0,1]$ be a Legendrian isotopy.
Then there exists an ambient contact isotopy
 $f_t , t\in[0,1]$, in the space of contactomorphisms of $(M,\xi)$ to
itself such that
$f_0 ={\rm id}$ and $f_t(L_0)  = L_t$ for $t\in [0,1]$.
\end{Lem}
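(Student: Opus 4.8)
The plan is to produce the ambient isotopy $f_t$ as the flow of a time-dependent \emph{contact} vector field $X_t$ on $M$, chosen so that along $L_t$ it equals the velocity field of the given Legendrian isotopy. Concretely, writing the isotopy as $L:[0,1]\times S^1\to M$, $(t,s)\mapsto L_t(s)$, I will seek $X_t$ with $X_t\bigl(L_t(s)\bigr)=\partial_t L_t(s)$ for every $s$; then the flow of this family starting at $f_0=\id$ automatically satisfies $f_t(L_0)=L_t$, and since each $X_t$ is contact its flow consists of contactomorphisms. The construction can be arranged so that $X_t$ is supported in a small neighborhood of the trace $\bigcup_t L_t$, so no completeness issue arises even when $M$ is open.

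To build $X_t$ I use the standard correspondence between contact vector fields and their contact Hamiltonians. Fix a contact form $\alpha$ with $\ker\alpha=\xi$ and Reeb field $R$. For a function $H$ on $M$ the associated contact vector field $X_H$ is determined by $\alpha(X_H)=H$ together with $\iota_{X_H}d\alpha=(R\cdot H)\,\alpha-dH$; equivalently, writing $X_H=H\,R+Y$ with $Y\in\xi$, the component $Y$ is the unique vector in $\xi$ with $\iota_Y d\alpha|_\xi=-dH|_\xi$, using the nondegeneracy of $d\alpha|_\xi$. Thus to force $X_{H_t}\bigl(L_t(s)\bigr)=\partial_t L_t(s)$ I must prescribe the $1$-jet of $H_t$ along the curve $L_t$: its value must be $H_t=\alpha(\partial_t L_t)$, which fixes the Reeb component, and its $\xi$-differential must represent the $\xi$-part of $\partial_t L_t$, which fixes $Y$.

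The one point requiring care is the compatibility of these two prescriptions, since the tangent direction $e=\partial_s L_t$ of the curve already lies in $\xi$, so the value-prescription pins down $dH_t(e)$ and I must check it agrees with what the $Y$-prescription demands. Here the Legendrian hypothesis enters decisively. Writing $L^*\alpha=P\,ds+Q\,dt$, being Legendrian means $P=\alpha(\partial_s L_t)\equiv0$, whence $d\alpha(\partial_s L,\partial_t L)=\partial_s Q=\partial_s\!\bigl(\alpha(\partial_t L)\bigr)=dH_t(e)$; on the other hand the $Y$-condition evaluated on $e$ reads $dH_t(e)=d\alpha(e,Y)=d\alpha(\partial_s L,\partial_t L)$, the Reeb part dropping out because $\iota_R d\alpha=0$, so the two expressions for $dH_t(e)$ coincide. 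Hence the prescribed $1$-jet is consistent; the remaining transverse $\xi$-derivative of $H_t$ is free and is simply chosen to match $Y$. Extending this $1$-jet off the one-dimensional $L_t$ to a genuine smooth function $H_t$ on $M$ (cut off near $L_t$), smoothly in $t$, yields the contact vector field $X_t=X_{H_t}$, and its flow is the required contact isotopy. The main obstacle is exactly this jet-compatibility check, and the computation above shows it is resolved automatically by the Legendrian condition $P\equiv0$ rather than by any further genericity assumption.
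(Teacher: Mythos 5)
Your proof is correct, and it is a genuinely different (and more self-contained) route than the paper's. The paper offers no actual argument for Lemma~\ref{lem-gray}: it simply asserts that the statement ``is easily shown using the relative version of Darboux--Gray's theorem,'' i.e.\ it delegates to a parametric/relative Moser--Gray stability argument applied to a tubular neighborhood of the moving Legendrian. You instead construct the ambient isotopy directly as the flow of a time-dependent contact vector field built from a contact Hamiltonian whose $1$-jet is prescribed along $L_t$. Your key computation is exactly the right one: the only potential obstruction is the compatibility between the value-prescription $H_t=\alpha(\partial_t L_t)$ (which forces $dH_t(\partial_s L_t)=\partial_s\bigl(\alpha(\partial_t L_t)\bigr)$) and the $\xi$-differential prescription (which forces $dH_t(\partial_s L_t)=d\alpha(\partial_s L_t,\partial_t L_t)$), and the identity $L^*d\alpha=(\partial_s Q-\partial_t P)\,ds\wedge dt$ with $P\equiv 0$ shows these agree precisely because each $L_t$ is Legendrian. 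Extending the consistent $1$-jet off the embedded curves smoothly in $t$ and cutting off near the trace is routine, and compact support disposes of completeness of the flow. What your approach buys is an explicit, elementary proof requiring only the contact-Hamiltonian correspondence; what the paper's citation buys is brevity and the observation that the statement is an instance of general stability phenomena. Both are standard, and yours is the proof one finds written out in modern references on the Legendrian isotopy extension theorem.
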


 The classification of Legendrian knots up to Legendrian isotopy is thus
equivalent to
their classification up to ambient contact
isotopy; i.e., $L \sim L' \Leftrightarrow
L' = \phi (L)$ for some
global contactomorphism $\phi$ which is contactly isotopic to the identity.

\subsubsection{Legendrian trees}

 A {\it Legendrian  graph } (in particular, a {\it tree}) in a contact $3$-manifold
$(M,\xi)$  is a  graph (tree)  $L$ embedded
in $M$ in
such a way that all its edges are Legendrian segments, non-tangent to 
 each other  at the vertices.
We call two  graphs {\it diffeomorphic} if there exists a diffeomorphism between
their neighborhoods which moves one of the trees into the other.  Thus,
 diffeomorphic graphs not only have isomorphic
structure as abstract trees but also  have the same infinitesimal
 structure at the corresponding vertices.
For instance, for a pair of corresponding   vertices $p\in L$ and $p'\in L'$
the configurations of vectors in $\xi_p$ and $\xi_{p'}$
that are tangent to the Legendrian segments
beginning at these points, should be  linearly isomorphic.

If a  characteristic foliation $F_\xi$  has no closed leaves then
separatrices  of hyperbolic points form a Legendrian graph.
In Section~\ref{manipFoln}, we will consider
certain Legendrian graphs (which will turn out to be trees)
 formed by leaves of a characteristic foliation.

\begin{Lem}\label{lm:Leg-ambient} If two Legendrian trees $L$ and $L'$ in $M$
 are diffeomorphic,
then they can be deformed
one into the other via an ambient contact isotopy.
\end{Lem}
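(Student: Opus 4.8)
The plan is to argue by induction on the number of edges of the tree, building $L$ up one Legendrian arc at a time and, at each stage, upgrading a Legendrian isotopy of arcs to an ambient contact isotopy by means of the relative form of the Darboux--Gray argument behind Lemma~\ref{lem-gray}.

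\emph{Base case: a single edge.} Here $L$ and $L'$ are single Legendrian arcs, and any two such arcs are automatically diffeomorphic. I would first use the transitivity of the action of $\Diff_0(M,\xi)$ on $1$-jets of Legendrian directions (Darboux' theorem, together with the freedom to rotate a Legendrian direction inside a contact plane) to bring, by an ambient contact isotopy, the initial point and initial Legendrian direction of $L$ to those of $L'$, so that afterwards the two arcs share a common germ at their starting point. I would then \emph{shrink} each arc toward its starting point by a Legendrian isotopy, reparametrizing inside a Darboux chart so that a small germ at the base point is untouched; this reduces the problem to comparing two arbitrarily short Legendrian arcs lying in a single Darboux chart and sharing their initial germ, which are visibly Legendrian isotopic in the standard model. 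Composing the shrinking of $L$, this model isotopy, and the reverse of the shrinking of $L'$ yields a Legendrian isotopy from $L$ to $L'$, which the relative version of Lemma~\ref{lem-gray} promotes to an ambient contact isotopy.

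\emph{Inductive step.} Choose an ordering $e_1,\dots,e_n$ of the edges of $L$ so that every initial union $L_k=e_1\cup\cdots\cup e_k$ is a subtree, and transport it to $L'$ through the given diffeomorphism $h$. By the inductive hypothesis I may assume, after an ambient contact isotopy, that $L_k=L'_k$ as Legendrian subtrees, with matching germ data. Passing from $L_k$ to $L_{k+1}$ adjoins a single new edge, attached at one vertex $p\in L_k$ and ending at a new univalent vertex, so I must carry the arc $e_{k+1}$ to $e'_{k+1}$ by an ambient contact isotopy \emph{fixed on} $L_k$. First I would match the two Legendrian directions at $p$: the hypothesis that the vertex configurations are linearly isomorphic, together with the fact that the $L_k$-directions at $p$ already coincide, supplies a linear isomorphism of $\xi_p$ fixing the $L_k$-directions and sending the direction of $e_{k+1}$ to that of $e'_{k+1}$, which I would realize by a contact isotopy supported near $p$ and fixing $L_k$. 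Once the initial germs agree, I would run the shrink/standard-model argument of the base case in a chart near $p$, keeping the moving arc disjoint from the remaining edges at $p$ (using non-tangency), to obtain a Legendrian isotopy from $e_{k+1}$ to $e'_{k+1}$ rel $L_k$, and extend it by the relative form of Lemma~\ref{lem-gray}.

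\emph{Main obstacle.} The essential technical content, and the point I expect to be most delicate, is the relative and parametric use of the Darboux--Gray/Darboux--Weinstein theorem for a \emph{tree} rather than a smooth submanifold: one must produce the extending ambient contact isotopy so that it genuinely fixes the already-matched subtree $L_k$ together with the germ of $\xi$ along it, and one must control the behavior at vertices, where several Legendrian arcs meet non-tangentially. In particular the direction-matching step at $p$ must be carried out without forcing the moving arc to become momentarily tangent to an edge of $L_k$ (which could create spurious intersections near $p$); handling this, and checking that the shrinking isotopies can be chosen to preserve the prescribed germ at the base vertex, is where the real care is needed.
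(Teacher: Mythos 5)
Your overall strategy (inductive, edge-by-edge matching, promoting Legendrian isotopies of arcs to ambient contact isotopies via the relative Darboux--Gray argument) is the same as the paper's, but your inductive step has a genuine gap at vertices of valence $\geq 3$. You propose to attach $e_{k+1}$ at a vertex $p\in L_k$ by first realizing, "by a contact isotopy supported near $p$ and fixing $L_k$," a linear isomorphism of $\xi_p$ that fixes the $L_k$-directions and carries the direction of $e'_{k+1}$ to that of $e_{k+1}$. But if two or more edges of $L_k$ already meet at $p$ (which is unavoidable for any subtree-respecting ordering once $L$ has a vertex of valence $\geq 3$, as the skeletons in this paper do), then any diffeomorphism fixing $L_k$ pointwise has differential at $p$ fixing two linearly independent vectors of $\xi_p$, hence restricting to the identity on $\xi_p$; it cannot move the direction of $e'_{k+1}$ at all. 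And the mismatch it would need to correct is genuinely present: the linear isomorphism $A$ of vertex configurations supplied by the hypothesis fixes the two matched lines only as eigenlines, so if its eigenvalues differ, the image of the third direction under the matching performed so far need not be parallel to the target direction. So the step "match the two Legendrian directions at $p$ rel $L_k$" can be impossible exactly when it is needed.

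The paper circumvents this by organizing the induction around vertices rather than single edges: at each stage exactly one already-arranged edge enters the current vertex $p$, and \emph{all} the remaining germs at $p$ are matched simultaneously by one local contactomorphism required to fix only that single incoming edge. Concretely, one projects $L\cap U$ and $L'\cap U$ along the Reeb ($z$-) direction of a Darboux chart onto the contact plane $\xi_p$, takes an area-preserving planar diffeomorphism $h$ carrying one projected germ to the other while fixed on the projection of the incoming edge, and lifts it canonically to a contactomorphism $H(x,y,z)=(h(x,y),z+f(x,y))$ with $h^*(y\,dx)=df$. Since only one direction at $p$ is constrained to be fixed, the differential of $H$ at $p$ is not forced to be the identity, and the tree structure guarantees this "one incoming edge" situation at every vertex of the traversal. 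To repair your argument you would need to replace your one-direction-at-a-time matching with such a simultaneous matching of all germs at each vertex (or some equivalent device); as written, the inductive step fails for any tree with a trivalent vertex.
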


{\it Proof.\ \ \ } Any two Legendrian segments are obviously
 Legendrian isotopic,
 and according to Lemma~\ref{lem-gray},
 they can be moved one into the other via an ambient contact isotopy.
  In fact, this remains true even if the curves
 coincide near one of the ends and the isotopy is
required to be fixed near this end.
Let us enumerate the Legendrian segments forming the trees
$$L=\mathop{\bigcup}\limits_1^N L_i,\quad L'=\mathop{\bigcup}\limits_1^N L'_i$$
 in such a way that the diffeomorphism of our hypothesis takes
${L_i}'$ to $L_i$, for all $i=1,\dots,N$, and moreover
there is a vertex $p$ and some $r\leq N$, such that $L_1\cap L_i= \{p\}
\ \forall i=2,\dots,r$ while $L_1\cap L_j=\emptyset \ \forall j>r$.
We begin by deforming $L_1'$ into $L_1$ via an ambient isotopy.
 Next we want, keeping a neighborhood $U$ of $L_1'$
fixed, to deform  $L'$ into $L$ in a neighborhood of the vertex $p$.
The contact structure  in a  small neighborhood $U\ni p_2$ can be defined in Darboux coordinates,
$\xi=\{dz=ydx\}$, so that the plane $\Pi=\{z=0\}$ can be identified with
the contact plane  $\xi_{p}$.
Let us project $L\cap U$ and $L'\cap U$ to this plane along the $z$-axis, and denote by
$\tilde L$ and $\tilde L'$ their images.
By  the assumption there  is a (germ of a) diffeomorphism $h:\Pi\to \Pi$ which sends
$\tilde L'$ into $\tilde L$. One can assume that $h$ is  fixed at the points of the
 projection of $L_1'$, preserves the area
form $dx\wedge dy$, and is isotopic to the identity in the space of diffeomorphisms
with these properties.
This $h$ can be canonically lifted to a local contactomorphism $H:U\to U$,
defined by the formula $H(x,y,z)=(h(x,y),z+f(x,y))$, where
$f$ is determined by the conditions that $h^*(ydx)=df$ and  that $f$ vanish at the origin.
The contactomorphism
 $H$ sends $L'\cap U$ into $L\cap U$, and it is contactly isotopic to the identity.
Thus  we can use $H$ to make $L$ and $L'$ coincide along $L_1$, and in the
neighborhood of the vertex $p$.
Continuing  this process inductively (at each stage of induction using in the
role of $U$ a neighborhood that includes all edges arranged in
previous levels of induction),
we construct the required isotopy between the
Legendrian  trees
$L'$ and $L$.
\qed

\subsubsection{Elliptic pivot lemma}
 As was mentioned in Section~\ref{sec:ellhyp},
 we assume in this paper that a ($C^1$-small) perturbation of
a surface
near an elliptic point, if needed, has already been
performed so that the elliptic point is
of so-called (b)-type; in which case,
 we can choose Darboux cylindrical  coordinates
 $(\rho,\varphi,z)$
with the origin at
the singular point, so that the contact structure is defined by the contact form
$dz+\rho^2d\varphi$ while
 the surface $F$ is given by the equation $z=0$.

Let us denote by $L_c$ the piecewise-smooth Legendrian curve in $F$
 which consists of two rays
$\varphi=0$ and $\varphi=c$, where $c\in (0,\pi]$. In particular, $L_\pi$ is just a
Legendrian line. Notice that  for any smooth curve $\Gamma\subset F$ there is a Legendrian
curve $\Lambda\subset \R^3$ whose Lagrangian projection
equals $\Gamma$. Indeed, given the smooth curve
$\Gamma\subset F$, we add the coordinate
 $z=\int\limits_\Gamma \rho^2d\varphi$ and consider
the Legendrian lift of $\Gamma$. Suppose that a smooth embedded curve
$\tilde L_c\subset F$  approximates $L_c$ and coincides with $L_c$ outside a small
neighborhood $D_\varepsilon=
\{\rho\leq\varepsilon\}\subset F$. Suppose also that
$\int\limits_{\tilde L_c\cap D_\varepsilon}\rho^2d\varphi=0.$
Then $\tilde L_c$ lifts to an embedded Legendrian curve $\hat L_c$ which approximates
$L_c$ and which coincides with $L_c$  outside an $\varepsilon$-neighborhood of the origin.
Thus, retaining the definition of $L_c$ above, we have:

\begin{Lem}\label{lem-pivot}  For any $\varepsilon>0$ there exists a  Legendrian isotopy $\hat L_c,\,
c\in (0,\pi]$, such that $\hat L_\pi=L_\pi$ and  for all  $c\in (0,\pi]$
the curve $\hat L_c$ coincides with $L_c$ outside  of the $\varepsilon$-neighborhood of
the origin.
\end{Lem}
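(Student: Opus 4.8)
The plan is to carry out the single-angle construction of the preceding paragraph continuously in the parameter $c$, the only genuinely new point being the simultaneous control of the ``balance'' integral $\int \rho^2\,d\varphi$ over the whole family, together with the matching at $c=\pi$. First I would record what is essentially free: each ray $\varphi=\const$ is a leaf of the characteristic foliation of $F=\{z=0\}$ (there the contact form restricts to $\rho^2\,d\varphi$), so $L_c$ is Legendrian and its only failure of smoothness is the corner at the origin. If for each $c$ I produce a smooth embedded $\tilde L_c\subset F$ agreeing with $L_c$ outside $D_\varepsilon$ and satisfying $\int_{\tilde L_c\cap D_\varepsilon}\rho^2\,d\varphi=0$, then its Legendrian lift $\hat L_c$ (obtained by adding $z=\int \rho^2\,d\varphi$, as above) is embedded because it projects injectively onto the embedded curve $\tilde L_c$, and it coincides with $L_c$ outside $D_\varepsilon$ precisely because the vanishing of the integral returns the lift to $z=0$. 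Thus the whole problem reduces to choosing the family $\{\tilde L_c\}$ smoothly in $c\in(0,\pi]$, with $\tilde L_\pi=L_\pi$.

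Second, I would fix a smooth family of roundings: inside $D_\varepsilon$ replace the corner of $L_c$ by a smooth embedded arc $\gamma_c$ tangent to the two rays at gluing points $\rho=\delta$, depending smoothly on $c$ and degenerating to no rounding (the straight line $L_\pi$) as $c\to\pi$. Since $d\varphi=0$ along the radial parts of $\tilde L_c$, the balance integral reduces to $I(c):=\int_{\gamma_c}\rho^2\,d\varphi$. Writing $\rho^2\,d\varphi=x\,dy-y\,dx$, the quantity $I(c)$ is twice the signed area swept by $\gamma_c$, and a naive monotone rounding inside the sector makes $I(c)>0$.

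Third comes the real work, enforcing $I(c)\equiv 0$. I would correct each $\gamma_c$ by a small localized $S$-bend near the origin (where $\rho<\delta$ and the curve is free of the radial parts), arranged to contribute negative signed area exactly cancelling the positive area of the monotone rounding; keeping the bend $S$-shaped rather than looped preserves embeddedness of $\tilde L_c$. The amplitude of this correction is determined continuously by $I(c)$ and is taken to vanish as $c\to\pi$, so that $\tilde L_\pi=L_\pi$ and $\hat L_\pi=L_\pi$ as required. One then checks that the resulting family is smooth in $c$, whence $c\mapsto\hat L_c$ is a Legendrian isotopy with all the asserted properties.

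The main obstacle is exactly this third step: the swept-area defect $I(c)$ must be killed by a correction that depends continuously (indeed smoothly) on $c$, keeps each $\tilde L_c$ embedded, and fades to zero at the endpoint $c=\pi$ where no rounding is performed at all. Verifying smoothness of the correction across this endpoint --- matching a genuinely rounded, balanced corner to the straight Legendrian line $L_\pi$ --- is the delicate point; everything else (the Legendrian property, embeddedness, and coincidence with $L_c$ outside $D_\varepsilon$) follows formally from the lift construction already set up.
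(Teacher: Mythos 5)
Your proposal is correct and follows essentially the same route as the paper: the paper reduces the lemma to exactly the reduction you describe (produce a smooth embedded $\tilde L_c$ agreeing with $L_c$ outside $D_\varepsilon$ with $\int_{\tilde L_c\cap D_\varepsilon}\rho^2\,d\varphi=0$, then take the Legendrian lift), stating this in the paragraph preceding the lemma and leaving the construction of the balanced family implicit. Your third step, killing the swept-area defect by a small $S$-bend whose amplitude varies continuously in $c$ and vanishes at $c=\pi$, is a correct way to supply the detail the paper omits.
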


\subsection{The invariants $tb$ and $r$ and Main Theorem}\label{sec:invts}
\subsubsection{The invariants $tb$ and $r$}\label{sec:tbr}
The two classical invariants of Legendrian  knots are defined
as follows.
Let $L$ be a Legendrian knot in $(M,\xi )$ which is homological to 0.
Suppose $L'$ is the
result of slightly pushing $L$ along some vector field transversal to $\xi$.
The intersection number $tb(L)$ of $L'$ with a spanning surface $S$ for $L$
is independent
of the choice of this vector field and of the surface $S$. It is  called the
{\it Thurston-Bennequin invariant}
\footnote{The invariant $tb$ is closely related
to Arnold's invariant $J^+$ and the Legendrian linking polynomial defined in
$S^1\times\R^2$; let us point out that for topologically trivial Legendrian
knots, this Legendrian linking polynomial provides no additional information
beyond $tb$. } of $L$. Equivalently,
$tb(L)$ is
the number of clockwise (positive) $2\pi$ twists of $\xi$ with
respect to the natural framing 
along $L$ induced by a spanning
surface for $L$.

Let $\beta\in H_2(M,L)$ be a
relative homology class and
$F$ a surface in the class of $\beta$.
Suppose $\tau$ is a positive
tangent vector to the oriented curve $L$. The degree $r(L|\beta)$ of $\tau$
with respect to a trivialization of the bundle $\xi |_F$ does not depend on
the choice of trivialization. Nor does it depend on the choice of a representative $F$ of
the class $\beta$. It is called the {\it rotation number} (or {\it Maslov index}) of
$L$ computed with respect to $\beta$.
If $\tilde\beta$ is another class from $H_2(M,L)$ then we have
$$r(L|\beta)-r(L|\tilde\beta)=e(\xi)[\beta-\tilde\beta].$$
Here $e(\xi)\in H^2(M)$ denotes the Euler class of the
$2$-dimensional oriented bundle $\xi$,
and the difference $\beta-\tilde\beta$ is considered as an absolute
class from $H_2(M)$. For homotopically trivial knots, one can always  choose $F$ to be
a (not necessarily embedded) disk. As it is proven in \cite{[E3]}, the Euler class
$e(\xi)$ of a tight contact structure $\xi$ vanishes on spherical classes
and thus $r(L|\beta)$ in this case is independent of $\beta$.
In this case (or if $\beta$ is clear from context), we write simply $r(L)$
instead of $r(L|\beta)$.
Notice that  $r(L)$ changes sign when the
orientation of the knot $L$ is reversed,
while $tb(L)$  is independent of the orientation of $L$.

\subsubsection{Main Theorem}
Our main result is the following:

\begin{Thm}\label{thm:main}
Let $L$ and $L'$ be two topologically trivial Legendrian knots in a tight contact
$3$-manifold. If $tb(L) = tb(L')$ and $r(L)=r(L')$ then L and L' are Legendrian
isotopic.
\end{Thm}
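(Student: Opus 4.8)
The plan is to reduce the Legendrian isotopy problem to a comparison of characteristic foliations on spanning disks, and then to bring these foliations to a common normal form determined by $tb$ and $r$. Since $L$ and $L'$ are topologically trivial, I would choose embedded spanning disks $D$ and $D'$ with $\p D=L$ and $\p D'=L'$. After a $C^\infty$-small perturbation (Section~\ref{sec:singularities}) I may assume that the characteristic foliations $D_\xi$ and $D'_\xi$ are generic: their only singularities are elliptic and hyperbolic, there are no separatrix connections, and each elliptic point is in the (b)-type normal form of Section~\ref{sec:ellhyp}, with the boundary a closed leaf in each case. The end goal is to produce an ambient contact isotopy of $(M,\xi)$ carrying $D$ to a disk whose characteristic foliation agrees with that of $D'$; by the germ principle recalled in Section~\ref{sec:fol} (a diffeomorphism of characteristic foliations extends to a contactomorphism of neighborhoods) together with Lemma~\ref{lem-gray}, this yields a Legendrian isotopy from $L$ to $L'$.

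The core of the argument is the normalization of $D_\xi$. First I would eliminate hyperbolic singularities by cancelling them against elliptic singularities of the same sign, using the standard elimination lemma, and remove any closed leaves. It is here that tightness enters essentially: a closed leaf, or an obstruction to such a cancellation, would encode an overtwisted disk, which the tight hypothesis forbids. Iterating, I reduce $D_\xi$ to a foliation all of whose leaves issue from elliptic sources and terminate either at elliptic sinks or at the boundary leaf, with the positive and negative elliptic points joined through the surviving hyperbolic saddles. A Poincar\'e--Hopf count for the oriented line field $\xi\cap TD$ gives $(e_++e_-)-(h_++h_-)=\chi(D)=1$, where $e_\pm,h_\pm$ denote the numbers of positive and negative elliptic and hyperbolic points; the standard index formulas expressing $tb(L)$ and $r(L)$ through these signed counts, together with the minimality of the reduced configuration, pin down the quadruple $(e_+,e_-,h_+,h_-)$. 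Thus the hypotheses $tb(L)=tb(L')$ and $r(L)=r(L')$ force the normalized foliations $D_\xi$ and $D'_\xi$ to carry identical singularity data.

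Next I would note that the separatrices of the hyperbolic points, together with the elliptic points they connect, form a \emph{Legendrian tree} $T\subset D$ (a tree precisely because no closed leaves and no saddle connections survive), and likewise a tree $T'\subset D'$. The elliptic pivot lemma (Lemma~\ref{lem-pivot}) lets me rotate the separatrices emanating from any elliptic point into a prescribed cyclic position, so I can match the infinitesimal configuration of Legendrian directions at corresponding vertices; combined with the agreement of the singularity data, this makes $T$ and $T'$ diffeomorphic as Legendrian trees. By Lemma~\ref{lm:Leg-ambient} the two trees are then carried one into the other by an ambient contact isotopy. Since each disk is recovered from a neighborhood of its tree by attaching the leaves that flow out to the boundary leaf, and the foliation germ along the tree determines the contact germ, this isotopy extends to carry $D_\xi$ onto $D'_\xi$ and hence $L$ onto $L'$, finishing the proof through Lemma~\ref{lem-gray}.

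I expect the main obstacle to be the normalization step of the second paragraph --- specifically, proving that tightness really does permit cancellation of all excess singularities and elimination of every closed leaf, so that one reaches the canonical minimal configuration. This is the only place where the tight hypothesis is used in an essential way, and it is exactly the step that fails for overtwisted structures, as anticipated in Section~\ref{sec:twist}. Making it rigorous will require controlling the global behaviour of the characteristic foliation --- ruling out recurrent leaves and limit cycles, and handling one-parameter families of cancellations --- rather than a purely local index computation.
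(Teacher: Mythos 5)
There is a genuine gap at the heart of your argument, in the passage from ``identical singularity data'' to ``diffeomorphic Legendrian trees.'' The counts $(e_+,e_-,h_+,h_-)$ determined by $tb$ and $r$ (via Lemma~\ref{lem-count}) fix only the number of vertices and edges of the tree formed by the separatrices; they do not determine its isomorphism type as an abstract tree, let alone its signed planar embedding type. A path and a star on the same vertex set are not isomorphic, and nothing in the elliptic pivot lemma changes the combinatorics of which vertices are joined to which --- it only rotates the tangent directions at a single elliptic point. So two reduced (or elliptic-form) disks with the same $(tb,r)$ can exhibit genuinely different trees, and Lemma~\ref{lm:Leg-ambient} does not apply to them. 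This is exactly the difficulty the paper's proof is built to overcome: it converts each signed acceptable tree embedding into a wavefront $\mathcal W_{\mathcal T}$ (Section~\ref{sec:algo}), and then proves in Lemma~\ref{lem:homoToCat} that any tree-based front can be carried by a transversal homotopy of fronts --- moving end edges between same-sign vertices, i.e.\ sliding zig-zags past cusps and crossings --- to the unique catalog front with the given $(tb,r)$. Your proof has no substitute for this step, and without it the argument only shows that knots bounding disks with \emph{diffeomorphic} foliations are isotopic (the content of Lemma~\ref{lm:shrink}), not that the pair $(tb,r)$ alone suffices.

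Two smaller points. First, the boundary $L$ is not a closed leaf of $D_\xi$: being Legendrian it is a union of leaves and singular points, and the Bennequin inequality forces singularities on it; the paper spends real effort putting these boundary singularities into normal absorbing form and then elliptic form, which is what makes the interior tree extractable and the later shrinking argument work. Second, your normalization stops at what the paper calls reduced form; the further passage to elliptic form (Lemma~\ref{lem-ellForm}), where every interior singularity is elliptic and the disk decomposes into type~(a) and type~(b) regions, is what makes the skeleton a deformation retract with the controlled structure needed for both the wavefront construction and Lemma~\ref{lm:shrink}. Your instinct about where tightness enters (ruling out limit cycles and enabling the cancellations) is correct, but that is the part of the proof that was already essentially available from \cite{[E2]}; the missing ingredient is the comparison of distinct tree types.
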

Theorem~\ref{thm:main} is proved in
Section~\ref{sec:mainProof}.

\subsubsection{The range of the invariants $tb$ and $r$}

The following inequality was proved by D. Bennequin (see \cite{[B]})
 for the standard contact structure on $S^3$ and
 was generalized  in \cite{[E3]} to general tight contact manifolds:

\begin{Thm}\label{thm:ineq}
Let $(M,\xi)$ be a tight contact manifold and
let $L$ be a Legendrian curve which is homological to $0$ in $M$. Then for any
homology class $\beta\in H_2(M,L)$ and surface $F\in\beta$, we have
$$ tb(L) \leq -\chi(F) -|r(L|\beta)|.$$
\end{Thm}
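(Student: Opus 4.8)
The plan is to compute the three quantities $tb(L)$, $r(L|\beta)$ and $\chi(F)$ from the characteristic foliation of a surface $F$ representing $\beta$, and then to extract the inequality from tightness via the elimination of singularities.

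First I would choose a surface $F$ with $\partial F=L$ in the class $\beta$ and, after a $C^\infty$-small perturbation (Section~\ref{sec:singularities}), arrange that the characteristic foliation $F_\xi$ is generic: its only singularities are elliptic and hyperbolic, there are no separatrix connections between hyperbolic points, and $L$ itself is a closed leaf. Orienting $F$ so that $\partial F=L$, I record the signed counts $e_\pm$ of positive and negative elliptic points and $h_\pm$ of positive and negative hyperbolic points.

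Second, I would establish three index-theoretic identities. Poincar\'e--Hopf for the line field $K=\xi\cap T(F)$, which is tangent to the boundary leaf $L$, gives
$$\chi(F)=(e_++e_-)-(h_++h_-).$$
Comparing the positive tangent $\tau$ of $L$ with a trivialization of $\xi|_F$ and tracking how it rotates past each singularity expresses the rotation number as a signed index sum,
$$r(L|\beta)=(e_+-h_+)-(e_--h_-),$$
so that $\chi(F)+r(L|\beta)=2(e_+-h_+)$ and $\chi(F)-r(L|\beta)=2(e_--h_-)$. Finally, measuring the contact framing against the surface framing along $L$ expresses $tb(L)$ through the same data.

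Third --- and this is the heart of the matter --- I would feed in tightness. The Giroux--Eliashberg elimination lemma cancels an elliptic and a hyperbolic point of the \emph{same} sign that are joined by a single separatrix, changing $e_\pm$ and $h_\pm$ by $-1$ simultaneously and hence leaving $\chi(F)$ and $r(L|\beta)$ untouched. The point is that in a tight manifold this cancellation can be carried out until no further same-sign pairs remain, and the surviving configuration cannot contain an isolated positive elliptic basin bounded by a closed leaf, since such a basin would be an overtwisted disk. From the resulting normal form, nonnegativity of the surviving counts converts the framing identity for $tb(L)$ into the bound $tb(L)\le -\chi(F)-|r(L|\beta)|$, the sign of $r(L|\beta)$ selecting which of $e_+-h_+$ or $e_--h_-$ controls the estimate. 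I expect this step to be the main obstacle: the identities above are bookkeeping, but turning the qualitative input ``no overtwisted disk'' into a quantitative bound on the signed singularity counts is where the genuine contact topology lies. An alternative, streamlined route (as hinted at by the convex-surface methods mentioned in the introduction) is to take $F$ convex with dividing set $\Gamma$, use the Kanda--Giroux formulas $tb(L)=-\tfrac12\#(L\cap\Gamma)$ and $r(L|\beta)=\chi(R_+)-\chi(R_-)$ for the regions $R_\pm$ cut out by $\Gamma$, and invoke Giroux's criterion (tightness forbids a contractible dividing curve) to rule out disk components of $R_\pm$ disjoint from $L$; the Euler-characteristic bookkeeping then yields the same inequality.
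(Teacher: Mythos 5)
Your overall strategy is the right one, and it is in fact the argument behind this theorem: the paper itself does not prove Theorem~\ref{thm:ineq} (it is quoted from Bennequin and from \cite{[E3]}), but the machinery of Section~\ref{manipFoln} --- reduce the interior foliation using tightness and the elimination lemma, then count singularities via Lemma~\ref{lem-count} --- is exactly the standard proof in the disk case. You have also correctly located the crux: tightness enters by forbidding the limit cycle one could manufacture if, say, a negative elliptic basin were bounded only by positive hyperbolic points (this is precisely the argument in the proof of Lemma~\ref{lem-stdize}).

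There is, however, a concrete gap in your bookkeeping that would prevent the proof from closing. A Legendrian boundary cannot be a singularity-free closed leaf of $F_\xi$ unless $tb(L)=0$: the points of $L$ where $\xi$ coincides with $T_pF$ are singularities of the characteristic foliation, there are at least $2|tb(L)|$ of them, and it is precisely through these boundary singularities that $tb$ enters the count. As written, your two identities $\chi(F)=(e_++e_-)-(h_++h_-)$ and $r(L|\beta)=(e_+-h_+)-(e_--h_-)$ determine $e_\pm-h_\pm$ from $\chi$ and $r$ alone, leaving no room for $tb$ and hence no way to extract an inequality on it; moreover the second identity is the closed-surface (relative Euler class) formula and is false for Legendrian boundary. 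The correct statement, after normalizing the boundary to normal absorbing form, is $e_\pm-h_\pm=\frac12\bigl(\chi(F)\mp tb(L)\pm r(L|\beta)\bigr)$ for the interior counts (Lemma~\ref{lem-count} for the disk; the general surface is identical), whose difference is $r-tb$, not $r$. With this correction the argument does close: the directed elimination (kill all interior negative elliptic and positive hyperbolic points, not arbitrary same-sign pairs) gives $e_-=h_+=0$, so $0\geq e_--h_-=\frac12(\chi+tb-r)$ yields $tb\leq-\chi(F)+r$, and reversing the orientation of $L$ yields $tb\leq-\chi(F)-r$, hence the stated bound. Your convex-surface alternative via Kanda's formulas and Giroux's criterion is a legitimate second route, though it uses technology that postdates the methods this paper deliberately employs.
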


We remark that the numbers $tb$ and $r$ also satisfy the following congruence relation
(follows, for instance, from Lemma~\ref{lem-count} below).

\begin{Prop}\label{prop:congr}
Let $(M,\xi)$ and $L$ be as above.
Then
$$tb(L)+r(L)\equiv 1({\rm mod} 2). $$
\end{Prop}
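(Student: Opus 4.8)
The plan is to prove the congruence $tb(L)+r(L)\equiv 1\pmod 2$ by computing both invariants in terms of the singularities of the characteristic foliation on a spanning disk $F$ for $L$, and then invoking a parity bookkeeping argument. The reference to Lemma~\ref{lem-count} in the statement strongly suggests the intended route: that lemma presumably expresses $tb(L)$ and $r(L)$ as signed counts of elliptic and hyperbolic singularities of $F_\xi$. So first I would arrange, after a generic $C^\infty$-small perturbation (as permitted by the standing assumptions in Section~\ref{sec:singularities}), that the characteristic foliation $F_\xi$ has only finitely many nondegenerate singular points---elliptic ($\pm$) and hyperbolic ($\pm$)---with no separatrix connections, and that $L=\partial F$ is a Legendrian leaf.

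The key computation I would carry out is to recall the two counting formulas for a disk. Writing $e_\pm,h_\pm$ for the numbers of positive/negative elliptic and hyperbolic points, the Poincar\'e--Hopf theorem applied to the line field $K=\xi\cap T(F)$ on the disk gives an index relation of the form $\chi(F)=(e_++e_-)-(h_++h_-)$, i.e.\ $1$ equals the total elliptic count minus the total hyperbolic count. Separately, $tb(L)$ and $r(L)$ are each expressed as \emph{signed} differences of these same quantities---schematically $tb(L)=-\bigl((e_++e_-)-(h_++h_-)\bigr)/\text{(something)}$ and $r(L)=(e_+-e_-)-(h_+-h_-)$ type expressions coming from the degree of $\tau$ against a trivialization of $\xi|_F$. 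The precise coefficients I would take from Lemma~\ref{lem-count}. Then I would simply add the two formulas and reduce modulo $2$.

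The essential point making the argument work modulo $2$ is that the signs drop out: modulo $2$, a signed count $e_+-e_-$ agrees with the unsigned count $e_++e_-$, and likewise for the hyperbolic points. Thus modulo $2$ both $tb(L)$ and the Euler-characteristic/Poincar\'e--Hopf relation collapse to the \emph{same} total parity $E+H$ of singularities (where $E=e_++e_-$, $H=h_++h_-$), and similarly $r(L)$ reduces to a combination of $E$ and $H$ mod $2$. Combining these, $tb(L)+r(L)$ reduces to an expression in $E$ and $H$ modulo $2$ which, using $E-H=\chi(F)=1$, must equal $1$. So the whole statement follows from the single topological identity $\chi(F)=1$ for the disk together with the parity-insensitivity of signs.

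The main obstacle I expect is purely bookkeeping rather than conceptual: getting the coefficients and signs in the counting formulas exactly right, and confirming that the particular integer combinations defining $tb$ and $r$ reduce, modulo $2$, precisely to $E$, $H$, and $\chi(F)$ in the way required. In particular one must be careful that the formula for $tb(L)$ (the twisting of $\xi$ along $L$ relative to the surface framing) and the formula for $r(L)$ (the Maslov/degree computation) are both referred to the \emph{same} spanning disk and the same orientation conventions, so that adding them is legitimate. Once Lemma~\ref{lem-count} supplies these formulas, the residue computation is a one-line reduction; the care lies entirely in matching conventions so that $\chi(F)=1$ delivers the odd residue rather than an even one.
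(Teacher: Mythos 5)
Your proposal is correct and follows essentially the route the paper intends: Proposition~\ref{prop:congr} is stated with the parenthetical remark that it follows from Lemma~\ref{lem-count}, and your mod-$2$ reduction (signs drop out, and the total count of interior singularities equals $\chi(F)=1$, which is exactly the sum of the two equations of that lemma) is a valid way to extract the parity. The only remark worth making is that the derivation is even shorter than you anticipate: since $e_{\pm}-h_{\pm}$ is an integer, the right-hand side $\tfrac{1}{2}(1\mp tb(L)\pm r(L))$ of Lemma~\ref{lem-count} forces $1\mp tb(L)\pm r(L)$ to be even, which is already the claimed congruence.
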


Let  $[{\mathcal E}]$ denote the set of  isotopy classes of (conventional) knots in $M$
and let $[{\mathcal L}]=\pi_1({\mathcal L})$
denote the set of Legendrian isotopy classes of Legendrian knots.

Suppose, for simplicity, that $e(\xi)=0$.
 Then we have a map
$$\lambda :[{\mathcal L}]\rightarrow [{\mathcal E}]\times{\Z}\times{\Z},$$
where the first factor gives the topological class associated to a given
Legendrian class and the second  and the third factors
give the values of the invariants $r$ and $tb$ on this class.

The  inequality (Theorem~\ref{thm:ineq}) and the congruence
 (Proposition~\ref{prop:congr}) impose restrictions on the
image of the map $\lambda$. However, these are
 are not the
only restrictions. Additional restrictions have also been found by Y. Kanda (see \cite{[Ka]}),
D. Fuchs and S. Tabachnikov (see \cite{[FuTa]}), P. Lisca and G. Mati\'c (see \cite{[LiMa1], [LiMa2]}), L. Rudolph (see \cite{[Ru1], [Ru2]}) and for the analogous map in the case of Legendrian links K. Mohnke \cite{[Mo]}.

 Our main theorem~\ref{thm:main} states that the map $\lambda$ is injective when restricted
to
$\lambda^{-1}([0]\times{\Z}\times{\Z})$, where $[0]$ denotes the
topological class of the unknot. The map is not however injective in
general (see \cite {[Ch2], [EGH]}).

Let $\mathcal D\subset \Z\times \Z$  be the range of the invariants
$(r,tb)$ on the space of topologically trivial Legendrian knots.
Even though  Theorem~\ref{thm:ineq} and Proposition~\ref{prop:congr}
preclude surjectivity of $\lambda_0=\lambda|_{[{\mathcal L}]_0}$, where
$[{\mathcal L}]_0=\lambda^{-1}([0]\times{\Z}\times{\Z})$, the domain ${\mathcal D}$ of
$\lambda_0$ is as large as possible (see Fig. \ref{fig-catalog}). Indeed:

\begin{Lem}
${\mathcal D}=\{(m,-|m|-2k-1)\;|\; k\geq 0\}$

\end{Lem}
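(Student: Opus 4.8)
The plan is to establish the two inclusions separately. The inclusion $\mathcal{D}\subseteq\{(m,-|m|-2k-1)\mid k\geq 0\}$ is a formal consequence of the two results already quoted, while the reverse inclusion is constructive. For the restriction, let $L$ be a topologically trivial Legendrian knot and take for $F$ the embedded $2$-disk it bounds, so that $\chi(F)=1$ and the relevant class $\beta\in H_2(M,L)$ is the disk class. Theorem~\ref{thm:ineq} then reads $tb(L)\leq -1-|r(L)|$. Writing $m=r(L)$ and $n=tb(L)$, this says $n+|m|+1\leq 0$. On the other hand Proposition~\ref{prop:congr} gives $n+m\equiv 1\pmod 2$, and since $|m|\equiv m\pmod 2$ we get that $n+|m|$ is odd, hence $n+|m|+1$ is even. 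An even nonpositive integer has the form $-2k$ with $k\geq 0$, so $n=-|m|-2k-1$, which proves one inclusion.

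For the realization I would work inside a Darboux ball, which we identify with the standard contact $\R^3$; any unknot built there is topologically trivial in $M$. Start from the standard Legendrian unknot $U_0$ whose wavefront is a simple closed curve with exactly two cusps; a direct computation from the definitions in Section~\ref{sec:tbr} gives $(r,tb)(U_0)=(0,-1)$. The basic move is \emph{stabilization}: in the front projection one replaces a smooth arc by a small added zigzag (a pair of new cusps) supported in a disk, which leaves the knot type unchanged but modifies the invariants. A computation with the front shows that each such move decreases $tb$ by $1$ and changes $r$ by $\pm1$ according to the orientation of the zigzag; call these moves $S_+$, with effect $(r,tb)\mapsto(r+1,tb-1)$, and $S_-$, with effect $(r,tb)\mapsto(r-1,tb-1)$.

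Applying $a$ copies of $S_+$ and $b$ copies of $S_-$ to $U_0$ produces a topologically trivial Legendrian knot with $(r,tb)=(a-b,\,-1-(a+b))$. To hit a prescribed pair $(m,-|m|-2k-1)$ with $k\geq 0$ I solve $a-b=m$ together with $a+b=|m|+2k$: for $m\geq 0$ take $(a,b)=(m+k,k)$, and for $m<0$ take $(a,b)=(k,-m+k)$. In either case $a,b\geq 0$, so every such pair is realized, giving the reverse inclusion and hence equality.

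The main thing to pin down carefully is the effect of stabilization on the pair $(r,tb)$ — that is, verifying from the definitions of Section~\ref{sec:tbr} (or, equivalently, from the standard front formulas for $tb$ and $r$) that a single added zigzag lowers $tb$ by exactly $1$ and shifts $r$ by exactly $\pm1$, and that the stabilized front still lifts to an embedded Legendrian unknot. Everything else is the elementary arithmetic above. I expect the verification of the stabilization formulas, rather than the combinatorial bookkeeping, to be the only genuinely nonroutine point.
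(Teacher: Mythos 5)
Your proof is correct and follows essentially the same route as the paper: the inclusion $\mathcal{D}\subseteq\{(m,-|m|-2k-1)\}$ from Theorem~\ref{thm:ineq} applied to the spanning disk together with the congruence of Proposition~\ref{prop:congr}, and realization by explicit fronts in a Darboux ball. The only cosmetic difference is that the paper points to the catalog of Figure~\ref{fig-catalog}, whereas you generate the same wavefronts algorithmically by stabilizing the two-cusp unknot; the stabilization formulas you flag do follow directly from the front formulas for $tb$ and $r$ given in Section~\ref{sec:catalog}.
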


{\it Proof.\ \ }  The inequality of Theorem~\ref{thm:ineq} and the congruence
of Proposition~\ref{prop:congr} show that $${\mathcal
D}\subset\{(m,-|m|-2k-1)\;|\; k\geq 0\}.$$ On the other hand,
 for any pair
$(m,n)\in{\mathcal D}$ the catalog given in Fig. \ref{fig-catalog}
provides a wavefront of  a Legendrian knot $L$ in $\R^{3}$ with
$tb(L)=n,\, r(L)=m$.
 Since any contact manifold contains the standard contact $\R^3$
(by Darboux's theorem), all the
examples of the catalog can be constructed in a general $(M,\xi)$.
\qed

\subsubsection{Catalog of Legendrian unknots}\label{sec:catalog}

\begin{figure}[ph!]
\vspace{2cm}
\centerline{\epsfxsize=10cm \epsfbox{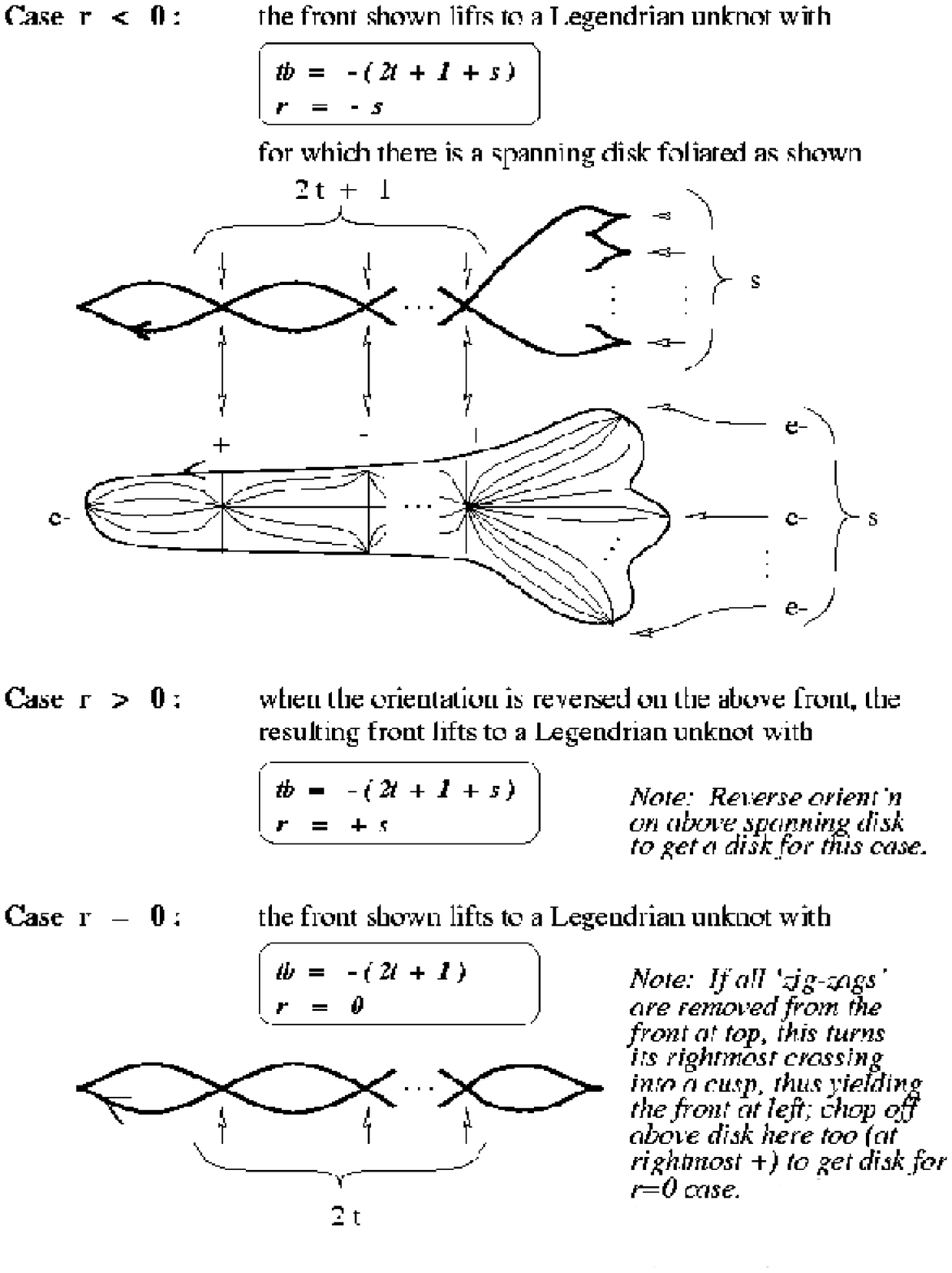}}\medskip
\caption{Catalog of Wavefronts}\medskip
\label{fig-catalog}
\end{figure}

Figure ~\ref{fig-catalog} provides a list of
Legendrian wavefronts each lifting
to a Legendrian unknot in standard $\R^3$ with specified values
of $tb,r$.

In general, the values of $tb,r$ can be read from a
 wavefront projection as
follows:

Given a Legendrian knot $L$ in $\R^3$ (or in $S^1\times\R^2=ST^*(\R^2)$)
let $\Omega$ be the
set of all points of self-intersection of the front
$L_{\rm Front}=p_{\rm Front}(L)$, and let $K$ be the set of all cusps of
the front. Note that
for the standard $(\R^3, dz-ydx)$, the projection $p_{\rm Front}$
is projection to the $(x,z)$-plane.
Now, for each
$p\in\Omega$,  define\footnote{This is equivalent to defining
$or (p) $ as the orientation determined by the pair of emanating rays from $p$
written in the
order (ray with greater slope, ray with lower slope).}   $or (p)$
to be
$+1$ or $-1$ depending on whether the two rays of $L_{\rm Front}$
emanating {\it from} $p$ lie on opposite sides of a vertical line through
$p$ or on
the same side.
These conventions are indicated in Figure~\ref{fig-frntConv}.
Also, for each $p\in K$, define $\kappa (p)$ to be
$+1$ or $-1$ depending on whether the ray emanating from $p$ lies above
or below the ray entering $p$ (i.e. has
$z$-value higher or lower than the other ray for given $x$-value).
Thus $\kappa$ indicates whether the original
curve $L$ was rising ($\kappa (p) > 0$) or falling ($\kappa (p) < 0$) as it
passed through the preimage of $p$.

Then,
\begin{eqnarray*}
  tb(L)&=&-{\sum_{p\in\Omega}}\ or(p) -{\frac{1}{2}}{\sum_{p\in K}}\ |\kappa (p)|         \\
 r(L)&=& {\frac{1}{2}}{\sum_{p\in K}}\ \kappa (p)
 \end{eqnarray*}

\begin{figure}[h!]
\centerline{\epsfxsize=6cm \epsfbox{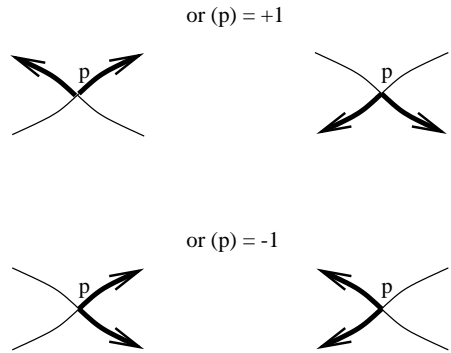}}\medskip
\caption{Conventions for Crossings of Fronts}
\label{fig-frntConv}
\end{figure}

 In the Catalog of Figure~\ref{fig-catalog}, each $(tb,r)$ case also includes a
description or a sketch of a singular foliation on a disk $D$.
It is easily checked that the Legendrian lift for each front has a spanning
disk with foliation as shown (see Section~\ref{sec:frntSpDisk} for details).


\section{Manipulation of characteristic foliation}\label{manipFoln}

The goal of this section is  Lemma \ref{lem-ellForm} below which, given a topologically
trivial Legendrian knot $L$, establishes existence of a spanning disc whose characteristic foliation
has a special form, which we call {\it elliptic}. We will achieve this
by taking an arbitrary spanning disc for $L$ and then altering its characteristic foliation via gradual
deformation of the surface.

\subsection{Birth, death and conversion}
The basic tools which will allow us to effect the desired alterations
of  characteristic
foliation  are the controlled birth and death of elliptic-hyperbolic
singularity pairs. The controlled death of such a pair, i.e. its killing, or {\it elimination},
is the subject of Lemma~\ref{lem-elim}. It was first proved by  E. Giroux in \cite{[Gi2]}, and in
a form
improved  by D.B. Fuchs, was presented in \cite{[E2]}. The creation of an elliptic-hyperbolic pair of
singularities is more local and is relatively
 straightforward; in its most
basic form (Lemma~\ref{lem-birth}) it is often stated without proof.
In the present paper, another
form of pair-creation (called {\it elliptic-hyperbolic conversion}, given by
Lemma~\ref{lem-convert}) will also be used. All three types
 (elimination, conversion, creation )
rely on the same basic idea: twisting a strip of surface along a Legendrian
curve in a manner dictated by the twisting of $\xi$ along the curve.

\begin{figure}[h!]
\centerline{\epsfxsize=12cm \epsfbox{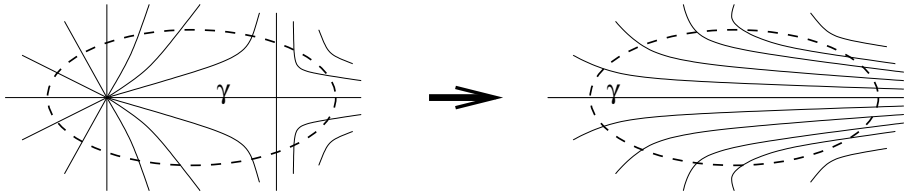}}\medskip
\caption{ Elliptic-hyperbolic elimination}\label{fig-GElim}
\end{figure}

\begin{Lem}\ {\rm (Elimination)}
Let $S$ be an embedded surface in $(M,\xi)$ such that $S_\xi$ has exactly
2 singularities, $p$ and $q$, which are respectively elliptic and
hyperbolic. Suppose that they are both of same sign and are connected by
$\gamma$, one of $q$'s
separatrices.
Then given any arbitrarily small neighborhood of $\gamma$,
there exists a $C^0$-small isotopy of $S$ supported in that neighborhood and
fixing $\gamma$ which
results in a new surface having no singularities of its characteristic foliation (see
Figure~{\rm \ref{fig-GElim}}).
\label{lem-elim}
\end{Lem}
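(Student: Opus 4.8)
The plan is to construct the eliminating isotopy explicitly in Darboux coordinates adapted to the separatrix $\gamma$. Since $\gamma$ is a Legendrian segment joining the elliptic point $p$ to the hyperbolic point $q$, I would first invoke the standard neighborhood theorem for Legendrian curves (the Darboux--Weinstein theorem cited in Section~\ref{sec:fol}) to fix coordinates $(x,y,z)$ in a tubular neighborhood of $\gamma$ in which $\xi=\{dz=y\,dx\}$ and $\gamma$ lies along the $x$-axis. In such coordinates the surface $S$ near $\gamma$ is a graph $z=u(x,y)$, and the characteristic foliation is governed by where $\partial_x u = y$ (the tangency locus of $S$ with $\xi$, i.e.\ the singular set, sits where the contact plane $\{dz=y\,dx\}$ becomes tangent to the graph). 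The two singularities $p$ and $q$ correspond to the two zeros of the relevant defining function along $\gamma$, and the key observation is that since $p$ is elliptic (index $+1$) and $q$ is hyperbolic (index $-1$) of the \emph{same sign}, these two zeros are set up to cancel.

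The central step is then to realize the elimination as a one-parameter deformation $u_t$ of the graphing function, supported in an arbitrarily small neighborhood of $\gamma$ and fixing $\gamma$ itself, that pushes the two index-$\pm1$ singular points together and annihilates them. I would encode this as a local model: writing the characteristic foliation's singular locus as the zero set of a smooth function $g_t(x,y)$ whose restriction to the $x$-axis has a simple double-zero degenerating to no real zero as $t$ increases, precisely a fold/cancellation in the sense of Morse theory, but adapted to the constraint that the perturbation preserve the contact-compatibility condition. Because the contact form is fixed and we are only moving the surface, the deformation is controlled entirely by choosing $u_t$; the requirement that $\gamma$ be fixed means the deformation must vanish identically along the $x$-axis, so I would take $u_t(x,y)-u(x,y)$ to be of the form $y^2\,\phi_t(x,y)$ for a suitable bump-supported $\phi_t$, which guarantees both the fixing of $\gamma$ and that the support stays in the prescribed neighborhood.

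The main obstacle, and the point requiring genuine care, is verifying that the deformation can be chosen to actually \emph{remove} both singularities simultaneously rather than merely sliding or splitting them, and that no new singularities are created elsewhere in the support region. This is exactly the content that makes the lemma nontrivial: one must check that the sign hypothesis (both singularities positive, or both negative) is what makes the local index sum zero and hence permits a global cancellation, whereas opposite signs would obstruct it. Concretely I expect to reduce to a normal form for the pair $(p,q)$ along $\gamma$ and then to exhibit the cancelling homotopy in that normal form, checking transversality of $S_t$ to $\xi$ throughout the interior of the support and at the boundary matching. I would verify $C^0$-smallness by controlling the sup-norm of $\phi_t$ through the width of the bump, and confirm that the resulting surface has characteristic foliation with no singularities in the neighborhood, as depicted in Figure~\ref{fig-GElim}. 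Since the construction is local and explicit, the relative and family versions follow by the same recipe, and the result agrees with the statement due to Giroux~\cite{[Gi2]}.
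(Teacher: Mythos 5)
There is a genuine gap, and it lies precisely in your ansatz $u_t-u=y^2\,\phi_t(x,y)$. In your coordinates $\xi=\{dz=y\,dx\}$ with $\gamma$ the $x$-axis and $S=\{z=u(x,y)\}$, the full tangency condition is $u_x=y$ \emph{and} $u_y=0$ (you only record the first equation). Along $\gamma$ we have $u(x,0)\equiv 0$, so $u_x(x,0)=0=y$ automatically, and the singular points of $S_\xi$ on $\gamma$ --- which is where both $p$ and $q$ sit, being the two endpoints of the separatrix --- are exactly the zeros of $u_y(x,0)$. They are therefore determined by the $1$-jet of $u$ along $\{y=0\}$. A perturbation of the form $y^2\phi_t$ preserves that $1$-jet: $\partial_y(y^2\phi_t)|_{y=0}=0$, so $u_y(x,0)$ is unchanged for all $t$ and the two singularities can never be cancelled. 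Your requirement ``the deformation must vanish identically along the $x$-axis'' conflates fixing the curve $\gamma$ (which only needs $u_t(x,0)=0$, i.e.\ a perturbation of order $y$) with fixing the tangent planes of $S$ along $\gamma$ (order $y^2$); the lemma requires the former but is impossible under the latter. Separately, your appeal to the vanishing of the index sum does not isolate the role of the same-sign hypothesis, since the indices $+1$ and $-1$ sum to zero regardless of the signs of $p$ and $q$.

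The mechanism the paper uses, and which your construction must be modified to capture, is to \emph{rotate} the surface about $\gamma$, i.e.\ to change its normal direction along the axis --- the ``twisting a strip of surface along a Legendrian curve'' common to Lemmas~\ref{lem-elim}--\ref{lem-birth}. Concretely, in a standard tubular neighborhood of $\gamma$ one works with staircase surfaces $A_f=\{\theta=f(x)\}$ swept out by a normal line at angle $f(x)$; singularities of $(A_f)_\xi$ on $\gamma$ occur where $f$ vanishes, elliptic or hyperbolic according to the sign of $f'$ there. The configuration of Lemma~\ref{lem-elim} is modelled by $f=g=\epsilon\cos x$ (two zeros on $(-\pi,\pi)$), and elimination amounts to replacing it by $f=h=-\epsilon$ (no zeros). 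The remaining issue you flag --- that no new singularities appear and that the deformation is $C^0$-small and supported near $\gamma$ --- is handled not by a straight-line homotopy of graphing functions but by observing that both $A_g$ and $A_h$ meet the boundary cylinder $Z$ in curves transverse to $\xi$ (since $|f'|<1$ while the leaves of $Z_\xi$ have slope $\geq 1$), and gluing in the singularity-free band of $Z$ between those two curves. If you replace $y^2\phi_t$ by a first-order twist of this kind, the rest of your outline can be carried through.
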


\begin{Lem}\ {\rm (Elliptic-hyperbolic conversion)}
Let $S$ be an embedded surface in $(M,\xi)$ having just one singularity of its
characteristic foliation called $p$. Suppose $p$ is elliptic (hyperbolic).
Let $\gamma$ and
$\tau$ be two leaves of the characteristic foliation which pass through $p$,
intersecting transversally and at this point only.  Moreover,
when $p$ is hyperbolic, suppose the name $\gamma$ is assigned to the
stable (or unstable) separatrix of $p$ for the case $p$ negative (or positive).
Then
given any arbitrarily small neighborhood of $p$, there exists a $C^0$-small isotopy
of $S$ supported in that  neighborhood, fixing both
$\gamma$ and $\tau$, which results in a
new surface having a hyperbolic (elliptic) singularity at the intersection of $\gamma$
and $\tau$ as well as two additional elliptic (hyperbolic) singularities, one on each side
of $p$ as shown in Figure~{\rm \ref{fig-convert}}, all three singularities being of the same sign
as $p$ and there being no further singularities.

\begin{figure}[h!]\centerline{\epsfxsize=10cm\epsfbox{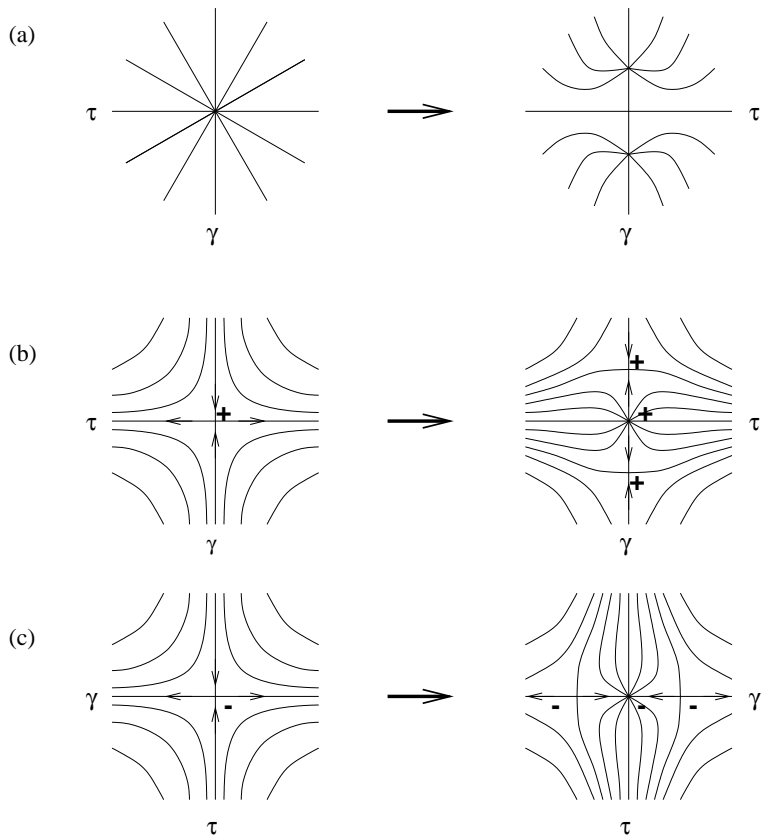}}\medskip
\caption{The cases of elliptic-hyperbolic conversion}
\label{fig-convert}
\end{figure}

\label{lem-convert}
\end{Lem}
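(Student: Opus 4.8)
The plan is to reduce to a local normal form by Darboux's theorem and to realize the conversion as an explicit change of the height of $S$ over the contact plane $\xi_p$ — geometrically, a controlled twisting of a thin strip of $S$, in the same spirit as the elimination Lemma~\ref{lem-elim}. I will take Darboux coordinates $(x,y,z)$ near $p$ with $\xi=\ker\alpha$, $\alpha=dz-y\,dx$, and write $S=\{z=f(x,y)\}$, so that $S_\xi$ is the kernel line field of $\beta_f:=\alpha|_S=(f_x-y)\,dx+f_y\,dy$; its singularities are the zeros of $\beta_f$ (the tangencies of $S$ with $\xi$), elliptic or hyperbolic according as the linearization of $(f_x-y,\,f_y)$ has positive or negative determinant. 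Two observations will organize the whole construction. First, $d\beta_f=(d\alpha)|_S=dx\wedge dy$ is \emph{independent} of $f$; writing $\beta_f=\iota_X(dx\wedge dy)$ for the vector field $X$ generating the co-oriented foliation, this says $\operatorname{div}(X)$ has constant sign on the chart, and by the sign lemma of Section~\ref{sec:singularities} this common sign is exactly the sign of every singularity of $S_\xi$ in the chart (a positive elliptic point being a source, a negative one a sink, and the — otherwise $C^0$-invisible — sign of a hyperbolic point being this same first-order datum). This is what will force all newly created singularities to share the sign of $p$. Second, replacing $f$ by $f+g$ alters $\beta_f$ only by the exact form $dg$, so the positivity (contact) condition persists for every choice of $g$; the entire construction thus reduces to exhibiting one compactly supported height correction $g$.

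For the elliptic case I would normalize as in Section~\ref{sec:ellhyp} to the model $\alpha=dz+x\,dy-y\,dx$, $S=\{z=0\}$ (radial source at $0$), and rotate coordinates so that the transverse leaves $\gamma,\tau$ become the $x$- and $y$-axes. Choosing the correction in the product form $g=h(x,y)\,xy$ makes $\beta(\p_x)$ vanish on $\{y=0\}$ and $\beta(\p_y)$ vanish on $\{x=0\}$, so both axes stay leaves and $\gamma,\tau$ are fixed for free. Setting $h$ equal to a constant $m_0<-1$ near the origin turns the central source into a saddle (the linearization acquires negative determinant), while cutting $h$ off to $0$ on $\{\rho\le\eps\}$ restores the radial picture on the outer circle. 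Since the index over that circle is still $+1$ but the origin now contributes $-1$, index conservation forces exactly two new $+1$ (elliptic) points to appear; the symmetry of the ansatz places them at $(\pm a,0)$ on $\gamma$, one on each side of $p$. It then remains to confirm that there are no zeros off the axes and that the three zeros are nondegenerate, and to note that $|g|=O(|m_0|\eps^2)$ makes the isotopy $z=t\,g$ $C^0$-small and supported where required. By the first paragraph all three singularities are automatically of the sign of $p$ (the negative-$p$ case being the mirror model $\alpha=dz+y\,dx$).

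The hyperbolic case is the time-reverse of this and I would run it in the saddle model $\beta_0=y\,dx+2x\,dy$ (graph $f_0=2xy$, one hyperbolic point at $0$), whose two separatrix lines are taken to be $\gamma,\tau$; a linear change to the axes again lets the product ansatz $g=h\,xy$ fix them. A suitable constant value of $h$ makes the origin elliptic, and the cutoff then forces, by index conservation $-1=(+1)+(-1)+(-1)$, two new hyperbolic points along one separatrix. This is the one genuinely delicate step, and the reason for the separatrix hypothesis in the statement: the emerging central elliptic point is a source or a sink according to the rotational sense of the twist, yet this source/sink type is a $C^0$ invariant that must match the sign of $p$ forced by the first paragraph. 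Because the stable and unstable separatrices of a saddle are flow-asymmetric, exactly one choice of which separatrix is split — the one named in the hypothesis, stable for $p$ negative and unstable for $p$ positive — produces an elliptic point of the correct (sink, resp.\ source) type, the other choice being obstructed. I expect this sign-matching to be the main obstacle; once it is settled, checking that the two lateral hyperbolic points are nondegenerate, of the sign of $p$, and the only new singularities, and that the isotopy is $C^0$-small, supported near $p$, and fixes $\gamma,\tau$, proceeds exactly as in the elliptic case.
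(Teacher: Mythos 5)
Your overall route is viable and genuinely different from the paper's. The paper proves Lemmas~\ref{lem-elim}, \ref{lem-convert} and \ref{lem-birth} simultaneously (Section~\ref{sec:manipProof}) by modelling $S$ as a staircase surface $A_f$ swept along the fixed Legendrian curve $\gamma$ by a normal line at angle $f(x)$: singularities sit at the zeros of $f$, their elliptic/hyperbolic type is read off from the sign of $f'$ there, and the modified model is glued back to the original surface along curves transverse to $\xi$ on a small cylinder around $\gamma$. Your graph-over-$\xi_p$ computation, with the observation that $d\beta_f=(d\alpha)|_S$ is independent of $f$ (so all singularities in one oriented graph chart share a sign) and the product ansatz $g=h(x,y)\,xy$ keeping both axes invariant, is a legitimate alternative; the elliptic case goes through essentially as you describe, modulo the off-axis zero check you defer, which does require care in choosing $h$: for radial $h$ the system $h_xx+h=1$, $h_yy+h=-1$ has solutions unless you keep $|h'|\rho$ small on the set where $h\le -1$.

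The gap is in the hyperbolic case, precisely at the step you yourself flag as the main obstacle and then settle by assertion. Run your own model: $f_0=2xy$ gives $\beta_0=y\,dx+2x\,dy$, $X=(2x,-y)$, a \emph{positive} saddle whose unstable separatrix is the $x$-axis and whose stable separatrix is the $y$-axis. With $f=(2+h)xy$ one finds $\beta(\partial_x)=y(1+h+h_xx)$ and $\beta(\partial_y)=x(2+h+h_yy)$, so new zeros on the $x$-axis require $h=-2$ while those on the $y$-axis require $h=-1$; making the centre elliptic forces $-2<m_0<-1$, hence $h$ crosses $-1$ but never $-2$, and the two new saddles necessarily land on the $y$-axis, i.e.\ on the \emph{stable} separatrix of the positive point $p$, with a source at the centre. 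The necessity argument says the same thing (the source's outflow must collide with the unperturbed inflow along the stable manifold, and is compatible with the outflow along the unstable one), and the orientation-reversed computation forces the pair onto the \emph{unstable} separatrix when $p$ is negative. Yet you assert that your construction realizes the opposite pairing, ``stable for $p$ negative and unstable for $p$ positive,'' copied from the hypothesis. So as written the delicate step is not proved, and what your construction actually produces contradicts what you claim it produces. You must carry the computation through, determine which separatrix your model genuinely splits, and reconcile that with the wording of the statement and with Figure~\ref{fig-convert}, checking your conventions for source/sink and stable/unstable against the paper's; until that discrepancy is resolved the hyperbolic half of the lemma is not established.
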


Note that cases (b),(c) in Figure~\ref{fig-convert} can be summarized by the
observation that the newly created pair of elliptic sink (source) singularities will of
necessity be created on whichever separatrix flowed into (out of) $p$; we
have used the name $\gamma$ for it. For
case (a) we have the freedom to choose which of the leaves
will be
granted new singularities and we label it $\gamma$.
For all three cases, the process can be viewed, on the one hand, as
the {\it conversion} of the singularity-type  of $p$ together with the {\it creation} of two elliptic (or else two hyperbolic singularities) on either side
 or, on the other hand,
 as  controlled birth
of an elliptic-hyperbolic pair during which one of the two newly created singularities
`displaces'
$p$.

\begin{Lem}\ {\rm (Basic form of pair-creation)}\label{lem-birth}
Let $S$ be an embedded surface in $(M,\xi)$ having no singularities in its
characteristic foliation $S_\xi$. Choose a leaf $\gamma$ of $S_\xi$.
Then given any arbitrarily small neighborhood through which $\gamma$ passes,
there exists a $C^0$-small isotopy of $S$ supported in that neighborhood and
fixing
$\gamma$ which results in a new surface having exactly two singularities (of same sign)
which both lie on $\gamma$, one being elliptic, the other hyperbolic as shown in
Figure~\ref{fig-birth}.
\end{Lem}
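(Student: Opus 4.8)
The plan is to work entirely in a standard local model around $\gamma$ and to create the pair by an explicit, $C^0$-small dimpling of the surface. First I would normalize the picture. Since $\gamma$ is a Legendrian leaf and $S$ is transverse to $\xi$ along it (there are no singularities), the Darboux--Weinstein theorem together with Giroux's fact that the characteristic foliation determines the germ of $\xi$ along a surface (see \cite{[Gi2]}) lets me choose coordinates in which $\xi=\ker(dz-y\,dx)$, $\gamma$ is the $x$-axis, and $S=\{z=y\}$ near $\gamma$; indeed the characteristic foliation of $\{z=y\}$ is the nonsingular foliation directed by $\partial_x+y\,\partial_y$, so any nonsingular foliation with $\gamma$ a leaf can be matched to it by a surface diffeomorphism and then realized by a contactomorphism fixing $\gamma$. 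In this model a point of a graph $\{z=u(x,y)\}$ is a singularity of the characteristic foliation exactly where $u_x=y$ and $u_y=0$, and on $\gamma$ (where $u(x,0)=0$ forces $u_x(x,0)=0$) this reduces to the single condition $u_y(x,0)=0$.

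Next I would write the perturbed surface as $\{z=\tilde u\}$ with $\tilde u=y+v$, where near $\gamma$ I take $v(x,y)=y\,g(x)$, cut off smoothly so as to be supported in the prescribed neighborhood, and $g$ is a bump with $g\le 0$, $g\to 0$ outside a small interval, dipping below $-1$ so that $1+g$ has exactly two transverse zeros $x_1<x_2$, with $g'(x_1)<0$ and $0<g'(x_2)<1$. Because $v(x,0)=0$ the curve $\gamma$ stays in $\tilde S$ and remains a leaf, and because $|v|\le\eps|g|$ on a box of height $\eps$ the isotopy $z=y+sv$, $s\in[0,1]$, is $C^0$-small and supported where required. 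In the region where $v=y\,g(x)$ one has $\tilde u_y=1+g(x)$ and $\tilde u_x-y=y\big(g'(x)-1\big)$, so the only singularities there are $(x_1,0)$ and $(x_2,0)$; in the transition/cutoff region I would keep $|v_y|<1$, forcing $\tilde u_y>0$ so that no spurious singularities appear.

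Finally I would identify the two singularities. Directing the characteristic foliation by $X=\tilde u_y\,\partial_x-(\tilde u_x-y)\,\partial_y$ and linearizing at a zero lying on $\gamma$ gives
\[
\det DX = \tilde u_{xy}\,(1-\tilde u_{xy})+\tilde u_{yy}\,\tilde u_{xx},
\]
which along $\gamma$ (where $v_{yy}=0$ and $v_{xx}=0$ for $v=yg$) equals $g'(x_i)\big(1-g'(x_i)\big)$. At $x_1$ this is negative, a hyperbolic point, and at $x_2$ it is positive, an elliptic point. Since $v_x(x_i,0)=0$, the normal of $\tilde S$ at each singularity points in the $\partial_z$ direction, which agrees with the co-orientation of $\xi$ (given by $dz-y\,dx$) along $\gamma$; hence both singularities are positive, and in particular of the same sign. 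This produces the required surface.

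I would expect the main obstacle to be the normalization step together with the bookkeeping that guarantees \emph{exactly} two singularities: making the reduction to the model $\{z=y\}$ genuinely relative (fixing $\gamma$) via the characteristic-foliation germ theorem, and choosing the slope of $g$ carefully so that the index computation yields precisely one elliptic and one hyperbolic point while the cutoff region contributes none.
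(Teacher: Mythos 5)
Your proof is correct, but it takes a somewhat different technical route from the paper's. The paper proves Lemmas~\ref{lem-elim}, \ref{lem-convert} and \ref{lem-birth} all at once (Section~\ref{sec:manipProof}) by working in the standard tubular neighborhood of a model Legendrian curve and using ruled \emph{staircase surfaces} $A_f=\{\theta=f(x)\}$: the singularity-free left-hand model is $A_h$ with $h\equiv-\epsilon$, the right-hand model is $A_g$ with $g=\epsilon\cos x$ on a suitable $x$-interval, and the two are joined by inserting a band of the bounding cylinder $Z$ between their boundary curves, the point being that both models meet $Z$ in curves transverse to $\xi$, so the splicing creates no extra singularities. You instead normalize the surface itself to the graph $\{z=y\}$ via the germ theorem of \cite{[Gi2]}, perturb the graphing function to $y(1+g(x))$, and verify everything from the explicit zero set of $(\tilde u_y,\tilde u_x-y)$ and the determinant $g'(1-g')$ of the linearization; the patching back to the original surface is done by a cutoff of the defining function rather than by a cylinder band. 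Both arguments implement the same geometric idea---twisting the surface through the contact plane along $\gamma$, with the sign of the derivative at the crossing deciding elliptic versus hyperbolic---but your version buys a completely explicit count of the singularities and their indices, at the cost of the uniformity of the paper's setup, which handles elimination, conversion and the singularity-curve manipulations with the same family of models. The one place you should add a line is the cutoff: with $v=\eta(y)g(x)$, $\eta(y)=y$ near $0$ and $\eta$ monotone to zero on the transition annulus, one has $\eta'\le 0$ and $g\le 0$ there, hence $\tilde u_y=1+\eta'g\ge 1$, so no spurious singularities appear (alternatively, stretch the transition region until $|\eta'g|<1$ literally holds); as written, the bald claim that $|v_y|<1$ can be arranged needs this small justification, but it is not a gap in the argument.
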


\begin{figure}[h!]
\centerline{\epsfxsize=12cm \epsfbox{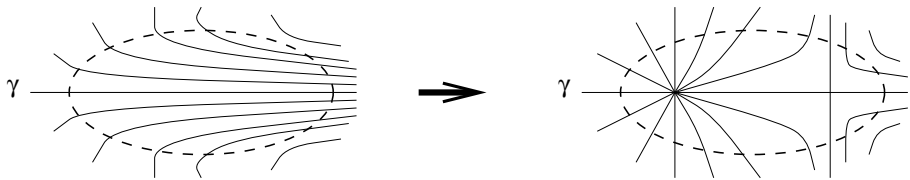}}\medskip
\caption{ Elliptic-hyperbolic creation}\label{fig-birth}
\end{figure}

\subsubsection{Singularity curves}\label{singCurves}
Given an oriented embedded surface $F$, suppose we have
the non-generic situation where there is a Legendrian curve $L \subset
F$, consisting entirely of singularities of $F_\xi$. We then say $L$ is
a {\it singularity curve} on $F$.
A model for such a situation is the $x$-axis (i.e. the line $\{y=z=0\}$)
on the surface $\{z=0\}$ in the standard $({\bf R}^3,dz-ydx)$.
This is illustrated in Figure~\ref{fig-singCurve}.

\begin{figure}[ht!]
\centerline{\epsfxsize=10cm \epsfbox{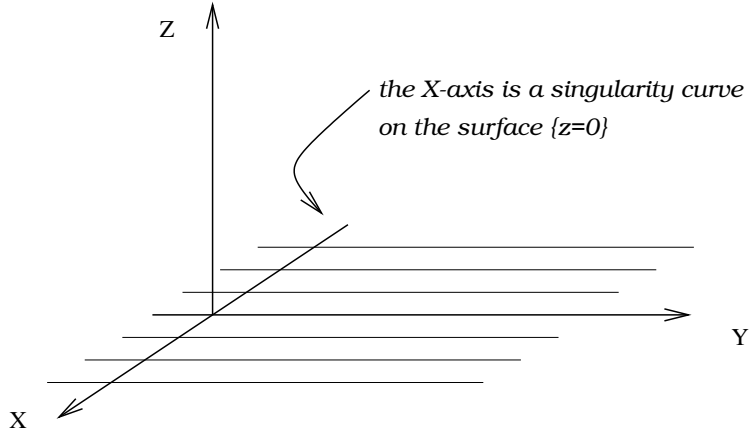}}\medskip
\caption{ Model for singularity curve in standard $({\bf R}^3, dz-ydx)$}
\label{fig-singCurve}
\end{figure}

Note that on a given oriented embedded surface $F$ all the singularities in a singularity curve are of the
same sign, and we can thus speak of positive or negative singularity curves on $F$.

\subsubsection{Proof of manipulation lemmas}\label{sec:manipProof}

{\it Proof of Lemmas~\ref{lem-elim},~\ref{lem-convert},~\ref{lem-birth}.\ \ }
First, consider a model Legendrian curve and its neighborhood.
In the standard contact $({\bf R}^3, dz-ydx)$, let $L$ be the $x$-axis
and let $N$ be a small tubular neighborhood thereof.

Let $Z$ be a cylinder of radius 1
with $L$ as its core, parameterized by $\theta, x$ where $(r, \theta)$ are polar coordinates in
the $(y,z)$-plane. Note that the
characteristic foliation of the surface $Z$ has singularity curves along the
top and along the bottom, and has no other singularities.
Specifically, $\{y = 0, z= \pm 1\}$ are singularity curves and
the characteristic foliation flows
from the positive ($z = +1$) to the negative ($z = -1$)
singularity curve, crossing $\{z=0\}$ at an angle of
${\frac{\partial\theta}{\partial x}}= 1$.

Note that along any Legendrian curve we have a dilating (in normal plane)
contact flow and the tubular neighborhood this defines is standard, i.e.
contactomorphic to such a tubular neighborhood constructed around any
other Legendrian curve, in particular to the neighborhood $W = \{z^2 + y^2\leq r\}$
around $L$, with $W = \partial Z$. 
To prove each Lemma, let $N$ be a standard neighborhood of $\gamma$, i.e.
one contactomorphic to $W$
(with $\gamma$ corresponding to $L$).

Now consider model surfaces in $W$, each of the form $A_f$ for a smooth function $f: {\bf R} \to {\bf R}$, where $A_f$ is the smooth
surface swept out along $L$ by a line normal to $L$ whose angle
with the horizontal at each point is $f(x)$. Specifically, $A_f = \{ (x, y, z):
y \sin f(x)=z \cos f(x) \} = \{ (x, r, \theta): \theta = f(x) \mod 2\pi$. These are so-called {\it staircase surfaces}.

For positive real $\epsilon$, let $g(x) = \epsilon \cos x$ 
and $h(x)= -\epsilon$.
It is easily verified that
the characteristic foliation on $A_h$ within $W$ is
singularity-free (as given for Lemma~\ref{lem-birth}), and also that
it meets $Z$ in a transverse curve.
One may also verify that for $\epsilon < 1$, the characteristic foliation on $A_g$ for
$-\pi<x<\pi$ is as given for
Lemma~\ref{lem-elim};
and finally, that $A_g$ for $-\pi < x <-{\frac{\pi}{4}}$, has characteristic foliation like that
on the left of part (a) in
Lemma~\ref{lem-convert}, while $A_g$ for ${\frac{\pi}{4}} < x  < \pi$ has
foliation as on the left of parts (b),(c).
\begin{figure}
\centerline{\epsfxsize=6cm \epsfbox{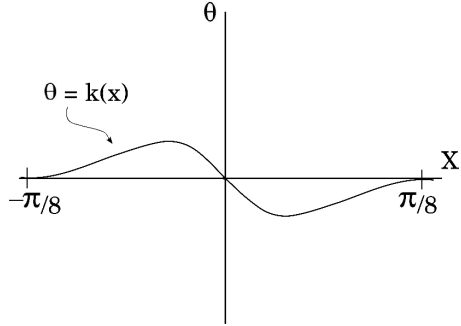}}\medskip
\caption{ The function $k(x)$ used to perturb $g(x)$ in proof of
Lemma~\ref{lem-convert} }
\label{fig-wiggle}
\end{figure}
In making this verification of characteristic foliation,
one observes
 that the hyperbolic vs. elliptic nature of the singularity
 is a result
of $g'$ being respectively negative or positive\footnote{Cases will correspond to rate of rotation of surface compared to $\xi$ having resp. different or same sign along two Legendrian axes through singularity.}
where $g(x)=0$, since $A_g$ twists in the same direction
as $\xi$ along lines $\{z = 0, x = \pm \frac{\pi}{2}\}$ albeit more slowly (due to $\epsilon < 1$).
Note that the curves $C=A_g\cap Z$
stay within an $\epsilon$-neighborhood of the horizontal curves
$H=\{z=0\}\cap Z$ and that
the slope $\frac{\partial\theta}{\partial x}$
of the curves $C$ (in the cylinder $Z$) is
$-\epsilon \sin x$, so the absolute slope is $\leq \epsilon$.
Meanwhile, the leaves of $Z_\xi$ have slope $\frac{\partial\theta}{\partial x}=1$ along $H$ and even higher slope elsewhere.
Thus, in particular, $C$ is transverse to $\xi$. In general, 
any staircase surface $A_f$, such that $|f'| < 1$, will meet $Z$ in a transversal curve.

We now define two further functions that will be used to produce the model surfaces for the right hand sides of parts (a),(b),(c) in Lemma~\ref{lem-convert}. Assume $\epsilon < \frac{1}{4}$.
Let $k(x)$ be a function as shown in Figure~\ref{fig-wiggle},
with the property that $|k'| \leq 2\epsilon$, that
 $k'(0)=-2\epsilon$, and that the absolute value of the function is bounded by $\epsilon$. 
We use $k$ to
perturb $g$ near $\pm {\frac{\pi}{2}}$. Specifically, let
$g_-(x)=g(x)+k(x+{\frac{\pi}{2}})$ in the region  $x<-{\frac{\pi}{4}}$
and let $g_+(x)=g(x)-k(x-{\frac{\pi}{2}})$ in the region $x>{\frac{\pi}{4}}$.
Then $g_-'(-{\frac{\pi}{2}})=-\epsilon$, while $g$ had slope $+\epsilon$ there.
And, $g_+'({\frac{\pi}{2}})=+\epsilon$, while $g$ had slope $-\epsilon$ there.

Moreover, $|g_-'| \leq 3\epsilon < 1$ and likewise $|g_+'| < 1$
so the surfaces
$A_{g_-}$ and $A_{g_+}$ (for the relevant $x$-regions) meet Z in transversal curves and
it is easily checked
that they exhibit the characteristic foliations respectively
for the right hand sides
of part (a) and of parts (b),(c) in Lemma~\ref{lem-convert} (with $\gamma$ as the $x$-axis $L$).

We now have model surfaces for the left and right hand sides of all
parts of the three Lemmas. Each of these meets $Z$ along curves transverse to $\xi$
and stays within a small neighborhood of $H$. For each Lemma statement,
let $B$ be the portion of $Z$ between
the two curves, and add it to the model for the left side, smoothing along
the two curves where the joining is performed. This can be done
without introducing singularities in the characteristic foliations
 since the curves are transverse to $\xi$.
In each case the new surface is a model for the right side (i.e. exhibits its foliation), contains $\gamma$,
is $C^0$-close to the old surface and coincides with it away from $L$. We thus have the required $C^0$-small deformations.

\qed

\subsubsection{Manipulation lemma for singularity curves}

With a modification of the above proof, one can also prove Lemma~\ref{lem-singCurve}
below. One uses new
model surfaces of the form $A_f$ (once again with $L$ in the role of $\gamma$)
where $f$ is defined piecewise as follows:
for values of $x$ on which we want a singularity curve use $f(x) = 0$, where we want a
half elliptic point (at $x = a$) use a function that is locally of the form $\epsilon (x-a)^3$, and
where we want a half hyperbolic point (at $x = b$) one of the form
$-\epsilon (x-b)^3$. Assume that the factor $\epsilon$ is
chosen sufficiently small (as in previous proof) and the
patching together of $f$ is done reasonably, so that the resulting $f$
has small absolute value and absolute slope and we can thereby
ensure $A_f$ meets $Z$ along
a transverse curve. The same argument as above then applies to
construct the desired surface deformation.

\begin{Lem}\ \label{lem-singCurve}
Let $S$ be an embedded surface in $(M,\xi)$ and let $\gamma\in S$ be an
embedded open interval.
Suppose the characteristic foliation near $\gamma$ is as shown on the
left (resp. right) hand side of Figure~\ref{fig-lemSingCurve}, then there is
a $C^0$-small isotopy of $S$ supported in an arbitrarily small neighborhood
of $\gamma$ that fixes $\gamma$, $\rho$,
and (where applicable) $\tau$, while resulting in
a new surface with characteristic foliation as shown on the right (resp. left)
side of this Figure.
\end{Lem}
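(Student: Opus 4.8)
The plan is to adapt the model-surface construction from the proof of Lemmas~\ref{lem-elim}, \ref{lem-convert}, and \ref{lem-birth} almost verbatim, changing only the choice of the generating function $f$. First I would invoke the standard tubular neighborhood of a Legendrian curve: since the contact flow dilates in the normal plane, a neighborhood of $\gamma$ is contactomorphic to the neighborhood $W=\{z^2+y^2\le r\}$ of the $x$-axis $L$ in the standard $(\mathbb{R}^3, dz-ydx)$, with $\gamma$ corresponding to $L$. This reduces the problem to producing, inside $W$, two staircase surfaces $A_f$ realizing the left- and right-hand foliations of Figure~\ref{fig-lemSingCurve}, each meeting the boundary cylinder $Z=\partial W$ in a curve transverse to $\xi$.

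Next I would specify the generating functions. Following the recipe indicated above, $f$ is built piecewise from three local pieces: $f(x)=0$ on any interval where the model is to carry a singularity curve (so that $A_f$ locally coincides with $\{z=0\}$ and $L$ is a line of singularities), $f(x)=\epsilon(x-a)^3$ near any prescribed half-elliptic point $x=a$, and $f(x)=-\epsilon(x-b)^3$ near any prescribed half-hyperbolic point $x=b$. These pieces are patched together so that the resulting $f$ is smooth with small absolute value and small absolute slope; in particular I would check $|f'|<1$, which by the transversality computation already carried out (the slope of $A_f\cap Z$ inside $Z$ is bounded by $|f'|$, while the leaves of $Z_\xi$ have slope $\ge 1$) guarantees that $A_f$ meets $Z$ in a transverse curve.

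I would then verify that $A_f$ realizes the prescribed foliation, reusing the sign criterion established earlier: a singularity of $A_f$ occurs exactly where $f(x)=0$, and it is elliptic or hyperbolic according as $f'$ is positive or negative there. On an interval where $f\equiv 0$ this yields a singularity curve; at $x=a$ the cubic $\epsilon(x-a)^3$ interpolates between the singularity-curve regime and an open $f>0$ regime, producing exactly a half-elliptic point, while the minus-cubic at $x=b$ produces a half-hyperbolic point. With model surfaces for both sides in hand, I would finish exactly as before: cut $Z$ along the two transverse boundary curves, take the annular piece $B$ between them, glue it to the left-hand model, and smooth along the transverse curves. Because the gluing curves are transverse to $\xi$, no new singularities are introduced, and the result is a surface $C^0$-close to the original, agreeing with it off a neighborhood of $\gamma$; arranging the patching to fix the marked leaves $\gamma$, $\rho$, and $\tau$ completes the construction, and exchanging the roles of the two models gives the ``resp.'' direction.

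The step I expect to be the main obstacle is the foliation verification at the half-singular points. Unlike the nondegenerate case $g(x)=\epsilon\cos x$, where $g'\ne 0$ at the zeros of $g$ makes the elliptic/hyperbolic dichotomy immediate, the cubic pieces have $f(a)=f'(a)=0$, so the singularity sits precisely on the degenerate boundary between the elliptic and hyperbolic types. The care lies in confirming that this degeneracy is exactly what manifests as a \emph{half} singularity adjoining the singularity curve, and in choosing the patching between the cubic pieces and the flat ($f\equiv 0$) pieces so that neither spurious interior singularities appear nor the bounds $|f|,|f'|$ needed for transversality to $Z$ are violated.
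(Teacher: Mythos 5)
Your proposal is correct and follows essentially the same route as the paper: the paper's own (brief) argument likewise builds piecewise staircase surfaces $A_f$ with $f\equiv 0$ along the intended singularity curve, $\epsilon(x-a)^3$ at a half-elliptic point and $-\epsilon(x-b)^3$ at a half-hyperbolic point, keeps $|f|$ and $|f'|$ small so that $A_f$ meets $Z$ transversely, and then glues in the annular piece of $Z$ exactly as in the proofs of Lemmas~\ref{lem-elim}--\ref{lem-birth}. Your added care about the degenerate zeros $f(a)=f'(a)=0$ is a sensible elaboration of what the paper compresses into ``the patching together of $f$ is done reasonably.''
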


\begin{figure}
\centerline{\epsfxsize=10cm\epsfbox{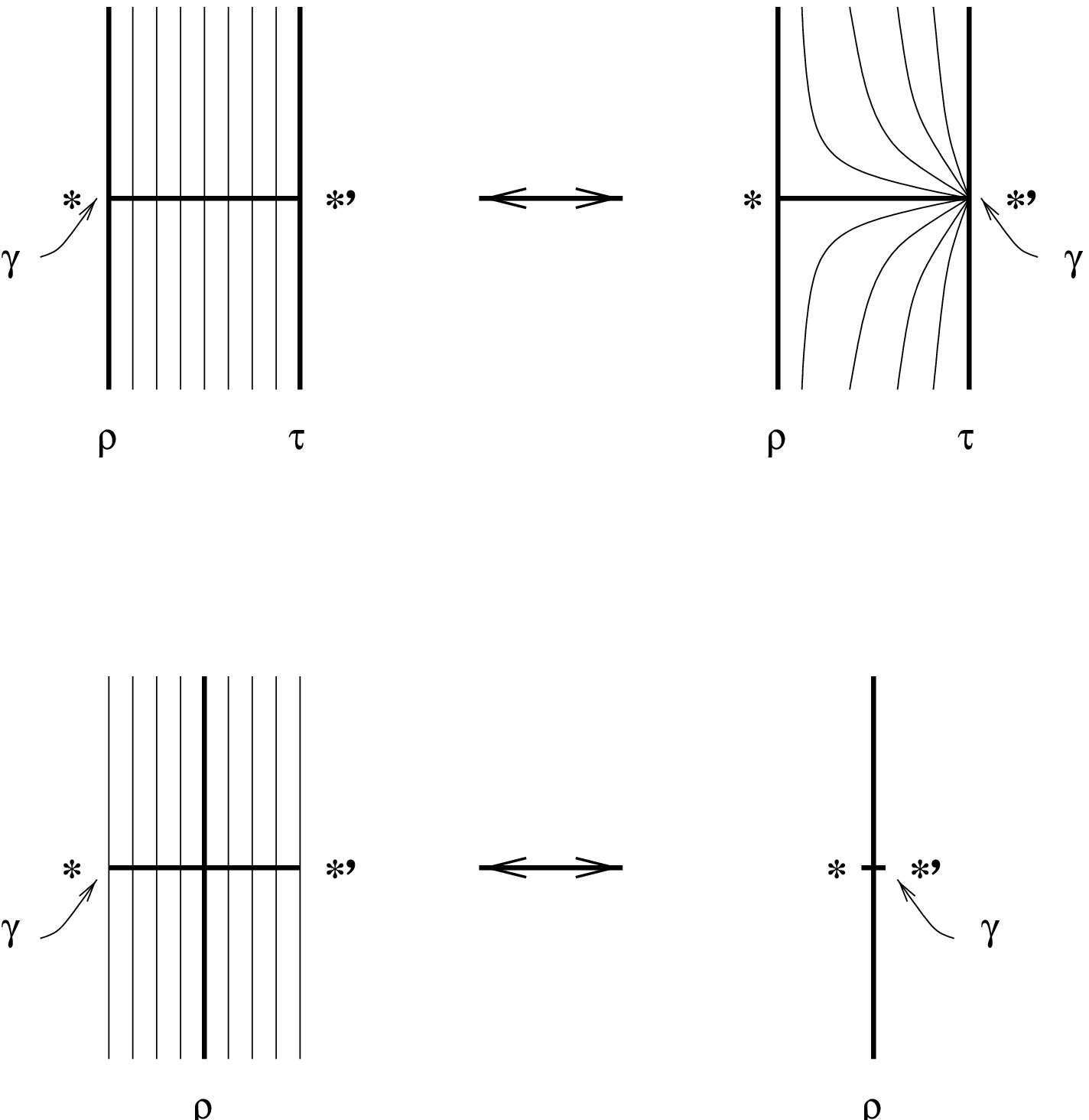}}
\medskip
\caption{Manipulations of singularity curves}
\centerline{(each *, *' means a fixed choice of elliptic or hyperbolic)}
\label{fig-lemSingCurve}
\end{figure}

\subsection{Characteristic foliation on the disk}
\subsubsection{Program for standardizing the foliation}
Given an (oriented) spanning surface $S$ of a Legendrian knot $L$,
the characteristic
foliation $S_\xi$ will be said to be in {\it  normal absorbing form} (abbrev. NAF)
along $L$
if the singularities on $L$ alternate in sign and the positive ones are
hyperbolic, the negative
elliptic.

Once we have this form of boundary foliation, we know all
flow between interior and boundary is directed towards the boundary.
This makes it easier to control the interior foliation.

In particular, by obtaining normal absorbing form on the boundary,
we are then able to eliminate all but positive elliptic and negative hyperbolic
singularities on the interior; this process is described in the proof of
Lemma~\ref{lem-stdize}.
Interior characteristic foliation that has been
reduced to this simple form, i.e. for which all singularities are
either positive elliptic or negative hyperbolic,
will be said to be {\it reduced}. Thus we will speak
of a disk whose foliation is {\it reduced with normal absorbing form on the
boundary}.
In this state,
we have control over the number and placement of the interior singularities as
we shall see in sections~\ref{sec-count} and~\ref{sec-skel}.
Then, once this control is achieved we can retain it while altering
the types of some boundary and interior singularities so as to make the over-all
foliation more suited to the present proof. This final form of foliation
will be called {\it elliptic form}; it is
the subject of the section~\ref{ellForm}.

To summarize, the sequence of characteristic foliation types we will pass
through is:
\begin{enumerate}
\item NAF on boundary:\hfill\hbox{just h+ and e- on boundary.}
\item Reduced with NAF on boundary:
\hfill\hbox{just h+ and e- on boundary,}\break
\hbox{}\hfill\hbox{and, just e+ and h- on interior.}\break
\item Elliptic form:\hfill\hbox{mostly
h+ and h- on boundary,}
\footnote{For precise definition,
see section~\ref{ellForm}.}\break
\hbox{}\hfill\hbox{just e+ and e- on interior.}\break
\end{enumerate}

\subsubsection{First steps of standardization}

The following Lemma deals with achieving steps 1. and 2. of the
above standardization program.

\begin{Lem} In tight $(M,\xi)$, let $D$ be an oriented, embedded disk with
Legendrian boundary $L$.
Then there exists a
$C^0$-small deformation of the disk which produces
a new disk whose characteristic foliation is reduced with
normal absorbing boundary.
\label{lem-stdize}
\end{Lem}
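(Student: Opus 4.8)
The plan is to establish the two defining properties of the target foliation in turn: first bring the boundary foliation into normal absorbing form, and then, holding that boundary form fixed, reduce the interior. Throughout I would use the manipulation Lemmas~\ref{lem-elim}, \ref{lem-convert}, \ref{lem-birth} and \ref{lem-singCurve}, each of which effects a $C^0$-small deformation supported near a chosen leaf, and I would invoke tightness only at the very end. To begin, $C^\infty$-perturb $D$ rel $\partial D$ so that $D_\xi$ is generic in the sense of Section~\ref{sec:singularities}: finitely many non-degenerate (elliptic or hyperbolic) singularities and no separatrix connections between hyperbolic points. Since $L=\partial D$ is Legendrian it is an invariant set of $D_\xi$, i.e. a union of leaf-arcs and singular points, and after the perturbation these boundary singularities are also non-degenerate.

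\emph{Boundary into NAF.} Along $L$ the planes $\xi$ and $T D$ both contain $T L$, so they differ by a rotation angle $\phi$ about $T L$; the singular points on $L$ are exactly the zeros of $\phi \bmod \pi$, and the total increment of $\phi$ around $L$ is governed by $tb(L)$. When $\phi$ is monotone the tangencies alternate in sign, since the coorientations of $\xi$ and of $D$ agree at $\phi\equiv 0$ and disagree at $\phi\equiv\pi$, giving $2|tb(L)|$ boundary singularities in an alternating pattern. I would therefore first cancel the superfluous boundary singularities arising where $\phi$ fails to be monotone — these occur in adjacent same-sign pairs and can be removed by the boundary forms of Lemmas~\ref{lem-singCurve} and \ref{lem-elim} — so as to make the signs alternate. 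Then I would apply the conversion Lemma~\ref{lem-convert} to turn each positive boundary elliptic into a positive hyperbolic and each negative boundary hyperbolic into a negative elliptic, keeping the relevant leaves along $L$ so that the construction stays near the boundary; this produces NAF, and one reads off from the local models that the interior flow along $L$ is then directed into $L$, i.e. the boundary is absorbing. The extra singularities that the conversions deposit just inside $L$ are harmless, as they will be consumed in the next step.

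\emph{Interior reduction.} With NAF in place the boundary is a global attractor and the interior flow is gradient-like, so no leaf runs from the boundary into the interior. I would now eliminate the interior singularities of the two unwanted types one at a time. To remove a positive hyperbolic point $p$, trace its two stable separatrices backwards; in backward time the absorbing boundary repels and the limit of each is a source, which can only be a positive elliptic point (by the source/sink Lemma for elliptic points). If the two sources are distinct, cancel $p$ against one of them by Giroux's Lemma~\ref{lem-elim} (same sign, joined by a separatrix), lowering the count of positive hyperbolics; symmetrically, each negative elliptic sink is cancelled against an adjacent negative hyperbolic by tracing the latter's unstable separatrix into it. An index (Poincar\'e--Hopf) count relative to the fixed NAF boundary guarantees that positive elliptic points are present whenever positive hyperbolic ones are, and likewise for the negative pair, so the supply needed for cancellation is never exhausted.

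The main obstacle is the degenerate case in which the two stable separatrices of $p$ both return to the \emph{same} positive elliptic point: they then bound an embedded subdisk $\Delta\subset D$, and if $\Delta$ contained no further singularities its closure would be an overtwisted disk. Ruling this out — equivalently, ruling out closed leaves and limit cycles of $D_\xi$, which would likewise bound overtwisted disks — is exactly where the hypothesis that $(M,\xi)$ is tight enters (the nonexistence of overtwisted disks underlying Theorem~\ref{thm:ineq}), and it is the crux of the argument; when $\Delta$ does contain singularities one instead runs the elimination inside the smaller disk $\Delta$. Granting the tightness input, an admissible cancelling pair is always available, so induction on the number of interior positive-hyperbolic and negative-elliptic singularities terminates, leaving a disk whose characteristic foliation is reduced with normal absorbing boundary. \qed
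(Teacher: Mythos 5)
Your boundary step is essentially the paper's: arrange $2|tb(L)|$ alternating-sign tangencies along $L$ and then apply the conversion Lemma~\ref{lem-convert} with $L$ in the role of $\tau$ to reach normal absorbing form. The interior reduction, however, has a genuine gap, and it sits exactly where the order of operations matters. You eliminate interior positive hyperbolic points first, and you correctly notice the degenerate configuration in which both stable separatrices of such a point $h$ return to the same positive elliptic source $e$, bounding a subdisk $\Delta$. But your fix --- ``run the elimination inside $\Delta$'' --- is circular: the unstable separatrix of $h$ that enters $\Delta$ must terminate at a sink, i.e.\ at a negative elliptic point of $\Delta$ (limit cycles being excluded by tightness), so $\Delta$ always contains singularities of precisely the type whose removal you have deferred to your second phase; moreover $\partial\Delta$ is a polycycle rather than an NAF boundary, so the induction is not well-founded. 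The paper avoids this by eliminating the interior \emph{negative elliptic} points first; once they are gone, the unstable separatrix of $h$ trapped in $\Delta$ would have nowhere to go, so the degenerate case cannot arise and every interior positive hyperbolic point cancels cleanly against a positive elliptic source.

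The second, more substantive, missing ingredient is the argument that a given negative elliptic sink $p$ actually admits an \emph{adjacent} negative hyperbolic point to cancel against. Your appeal to a global Poincar\'e--Hopf count (in effect Lemma~\ref{lem-count}) only shows that negative hyperbolic points exist somewhere on the disk whenever negative elliptic ones do; it says nothing about which hyperbolic points are joined to $p$ by a separatrix, which is what the elimination Lemma~\ref{lem-elim} requires. The paper supplies this via the \emph{basin} argument: the boundary of the basin of $p$ consists of alternating hyperbolic and positive elliptic points, and if all of those hyperbolic points were positive one could cancel them pairwise against the intervening positive elliptic points and produce a limit cycle, contradicting tightness; hence some hyperbolic point on the basin boundary is negative and can be used to kill $p$. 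This is the essential use of tightness in the lemma --- beyond merely excluding closed leaves, which is where you locate it --- and it is absent from your proposal.
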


{\it Proof.} Given $D$ and $L$ as in the hypotheses, we will
first arrange that signs alternate along $L$. By definition of
$tb(L)$, this invariant counts
the net $2\pi$-twisting along $L$ of $\xi$ with
respect to the framing induced by $D$. This is the same as the
net $2\pi$-twisting along $L$ of a normal to $\xi$ with respect to a
normal to $D$. Wherever these two normals are aligned along
$L$ there will be a singularity of $D_\xi$. Let us
assume the normals to $\xi$ and $D$ that we use are those induced
by the respective co-orientations of $\xi$ and $D$, then this
singularity will be positive (negative) when the normals coincide with
(are negatives of) each other.
Note that for each $2\pi$-twist of $\xi$, there will thus be
at least 2 singularities of $D_\xi$ (one of each sign).
Moreover, by twisting $D$
along $L$,
one can clearly attain the minimal situation
where there are exactly $2|tb(L)|$ singularities along $L$
with alternating signs. Note that this deformation can be taken to be
$C^0$-small. We now suppose it has been performed, and label the resulting
disk once again $D$.

\begin{figure}
\centerline{\epsfxsize=10cm\epsfbox{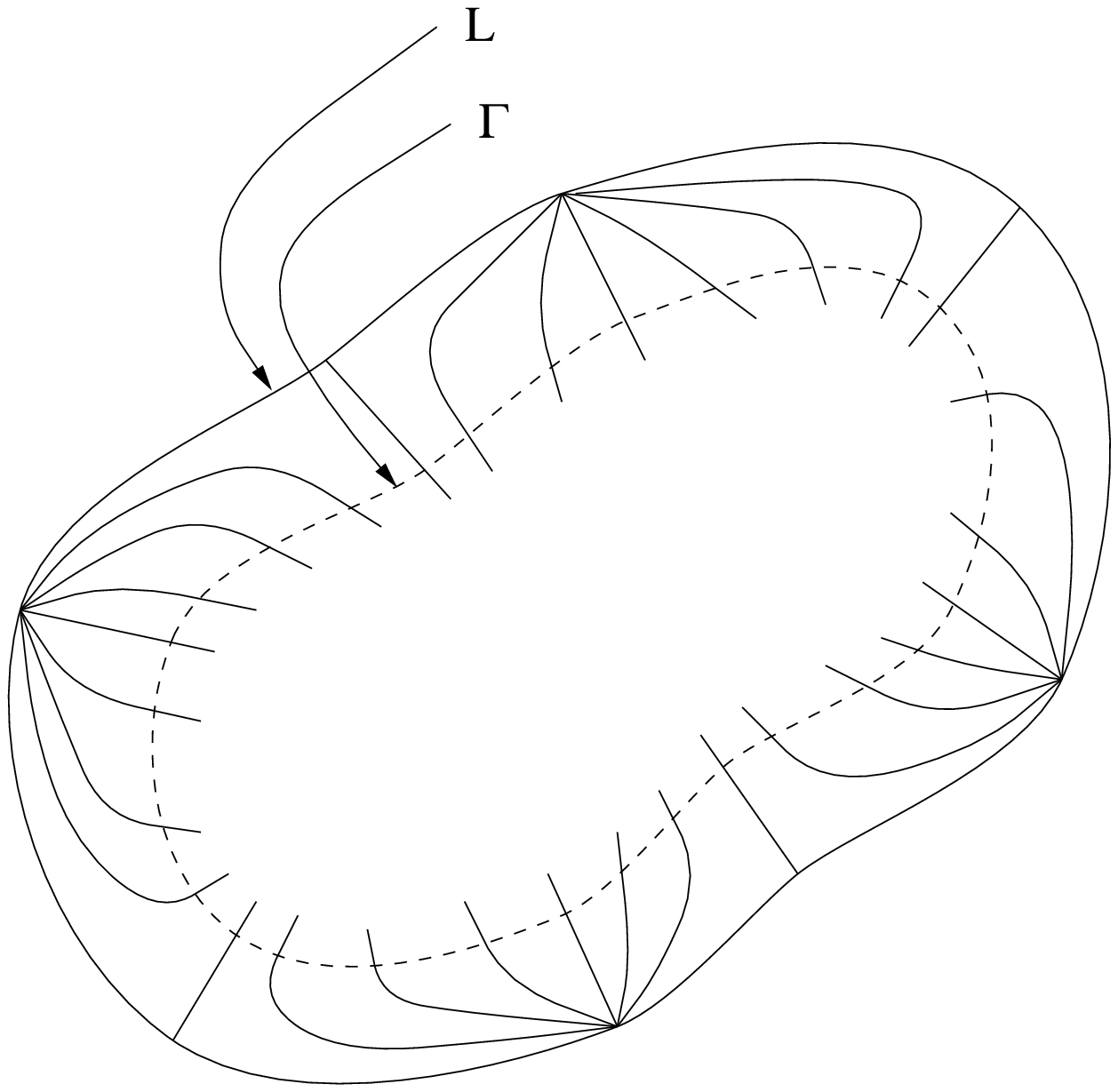}}
\caption{View of $D$ near $\partial D=L$;}
\centerline{Note normal absorbing form along $L$}
\centerline{and resulting $\Gamma$ just inside $L$.}
\label{fig-gamma}
\end{figure}

Using Lemma~\ref{lem-convert} allows us to keep $L$ fixed and
convert any positive elliptic singularities of $L$ into positive
hyperbolic singularities. Likewise, we can convert any negative
hyperbolic singularities into positive hyperbolic singularities.
If we let $L$ correspond to $\tau$ of Lemma~\ref{lem-convert} in
each case, then no new singularities will be introduced along $L$
and we will eventually, after successive conversions, reach normal
absorbing form. Note that the deformations of the disk
which accomplish these conversions can be made $C^0$-small.
Once again, label the resulting disk $D$.

Now that NAF has been achieved on $D$, there is an
obvious transversal unknot
$\Gamma$ lying on $D$ just inside $L$ (see Figure~\ref{fig-gamma}).
Because $D_\xi$ was in normal
{\it absorbing} form along $L$, the characteristic foliation flows
outward across $\Gamma$ and onto
$L$. This flow exiting across a transversal unknot
is the situation assumed in Prop. 2.3.1 and Section 4.4
of  the paper \cite{[E2]} where an account is given of how to
eliminate all interior
singularities  except for the positive elliptic and negative
hyperbolic ones. The basic idea is as follows:\break
- First, destroy all hyperbolic-hyperbolic connections\footnote{Such
connections are highly unstable; a $C^1$-small pushing upwards of the surface
at a point along such a shared separatrix will slightly shift the
characteristic flow
so that the two separatrices no longer connect to each other.}.\hfill\break
- Then, suppose there exists a negative elliptic point $p$ on the interior.
All of the arcs flowing into $p$ must originate at interior singularities,
since the flow exits across $\Gamma$ and there are no limit cycles in $D_\xi$
(by tightness). Consider the
{\it basin} of $p$, i.e. the closure of the set of all points connected to
$p$ by smooth arcs. It will be an immersed disk, with self intersection
only at (possibly) some subsegments of the boundary, and an embedding on
the interior. Let $B$ denote the boundary before immersion.
No two hyperbolic points
will be adjacent as we travel along $B$,
since all h-h connections were destroyed. No two
elliptic points can be adjacent either, since they must all be positive
(sources flowing to $p$).
Thus hyperbolic and (positive) elliptic
singularities will alternate along $B$. Moreover, there must be
at least 2 singularities in $B$.
If all $B$'s hyperbolic points were positive we
could kill them all, together with the positive elliptic points which must
separate them, thus obtaining a limit cycle. So in fact, there must exist
a negative hyperbolic point along $B$ and we can use it to kill $p$.
Thus we may assume there do not exist any negative elliptic points in the
region enclosed
by $\Gamma$.\hfill\break
- Finally, suppose there exists a positive hyperbolic point $p$ on the
interior. Look at its stable separatrices. These cannot come from a limit
cycle, a point outside $\Gamma$, or a hyperbolic point,
so they must come from an interior positive elliptic point. Kill the pair.
Thus we may assume there do not exist any positive hyperbolic points
in the region enclosed by $\Gamma$. The characteristic foliation
in this region is therefore now
in reduced form.\hfill\break
- Besides the breaking of hyperbolic connections,
 all deformations were
of the elimination type given in Lemma~\ref{lem-elim}, and so we may assume the
overall deformation required is $C^0$-small.
\qed

\subsubsection{Counting singularities}\label{sec-count}
The following count of interior singularities holds as long as we have normal absorbing
form on the boundary, whether or not the interior has been reduced.

\begin{Lem}\label{lem-count}
Let F be an embedded disk spanning the Legendrian knot L
and having normal absorbing form on L. Let $e_\pm$ and $h_\pm$ be the number of
$\pm$-ve interior elliptic and hyperbolic singularities respectively.
Then,

$$e_\pm - h_\pm = {\frac{1}{2}}(1 \mp tb(L) \pm r(L)).$$
\end{Lem}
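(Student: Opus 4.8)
The plan is to extract the two stated identities (the $+$ and the $-$ versions) from two separate index computations, one carried out for the tangent bundle $TF$ and one for the contact bundle $\xi$, and then to solve the resulting $2\times 2$ linear system for $e_\pm-h_\pm$. Throughout I write $d_\pm=e_\pm-h_\pm$. First I would record the boundary data supplied by normal absorbing form: as arranged in the realization of NAF in the proof of Lemma~\ref{lem-stdize}, the boundary $L$ carries $2|tb(L)|$ singularities that alternate in sign, the positive ones hyperbolic and the negative ones elliptic; hence there are $|tb(L)|$ positive hyperbolic and $|tb(L)|$ negative elliptic boundary singularities. Since we are in a tight manifold, $tb(L)<0$ by Theorem~\ref{thm:ineq} (for a disk its right-hand side is $-1-|r|$), so $|tb(L)|=-tb(L)$.

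The first identity comes from Poincar\'e--Hopf applied to the characteristic foliation, regarded as a tangent vector field on the subdisk $D'\subset F$ bounded by the transversal unknot $\Gamma$ lying just inside $L$ (the curve $\Gamma$ from the proof of Lemma~\ref{lem-stdize}). Because NAF makes the flow exit transversally across $\Gamma$, the field points outward along $\partial D'$ with no boundary zeros, so its index sum equals $\chi(D')=1$. Counting an elliptic zero with index $+1$ and a hyperbolic zero with index $-1$, this gives
\begin{equation*}
(e_+ + e_-) - (h_+ + h_-) = 1,\qquad\text{i.e.}\qquad d_+ + d_- = 1.
\end{equation*}

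The second identity comes from computing $r(L)$, the winding of the tangent $\tau$ in a trivialization of $\xi|_F$, equivalently the Euler number of $\xi|_F$ relative to the boundary framing $\tau$. The oriented characteristic foliation is also a section $v$ of $\xi|_F$, vanishing exactly at the singularities and equal to $\pm\tau$ along $L$; thus $r(L)$ is the sum of the indices of $v$ as a section of $\xi$, interior zeros counted fully and boundary zeros with weight $\tfrac12$. The place where the signs of the singularities enter is the local claim that the index of $v$ as a section of $\xi$ at a singularity $p$ equals $\sigma(p)\cdot\mathrm{ind}_{TF}(p)$, with $\sigma(p)=\pm1$ the sign of $p$: the co-orientation sign records precisely the relative orientation twist between $\xi$ and $TF$ around $p$ (this is where positivity of $\xi$ is used, cf.\ the source/sink dichotomy for elliptic points). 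So positive elliptic and negative hyperbolic each contribute $+1$, while negative elliptic and positive hyperbolic each contribute $-1$. The interior contributions sum to $d_+-d_-$, whereas the $|tb(L)|$ positive hyperbolic and $|tb(L)|$ negative elliptic boundary zeros all have $\xi$-index $-1$ and contribute $\tfrac12(-|tb(L)|-|tb(L)|)=-|tb(L)|=tb(L)$. Hence
\begin{equation*}
r(L) = (d_+ - d_-) + tb(L),\qquad\text{i.e.}\qquad d_+ - d_- = r(L) - tb(L).
\end{equation*}

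Adding and subtracting the two displayed relations yields $d_+=\tfrac12(1-tb(L)+r(L))$ and $d_-=\tfrac12(1+tb(L)-r(L))$, which is exactly the asserted formula $e_\pm-h_\pm=\tfrac12(1\mp tb(L)\pm r(L))$. The main obstacle is the rotation-number step: one must justify that the relative Euler number of $\xi$ may be evaluated on the characteristic section $v$ with boundary zeros contributing half their index, and, more delicately, verify the local index identity $\mathrm{ind}_\xi(p)=\sigma(p)\,\mathrm{ind}_{TF}(p)$ through an explicit model computation near each type of elliptic and hyperbolic singularity. Everything else is routine bookkeeping, and the Poincar\'e--Hopf step is standard once the transversal curve $\Gamma$ has been used to remove the boundary singularities.
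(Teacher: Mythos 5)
Your proposal is correct, and the two displayed relations do combine to give exactly the stated formula, but you arrive there by a different route than the paper. The paper's proof is essentially a reduction by citation: it observes that the transversal unknot $\Gamma$ lying just inside the NAF boundary is isotopic to the positive transversal push-off $T_+(L)$, recalls from \cite{[E2]} both the transversal counting formula $e'_\pm-h'_\pm=\tfrac12(1\mp l(T_+(L)))$ and the identity $l(T_+(L))=tb(L)-r(L)$, and notes that the collar between $\Gamma$ and $L$ contains no interior singularities, so the counts transfer. You instead prove the two underlying index identities from scratch: Poincar\'e--Hopf on the subdisk bounded by $\Gamma$ for $d_++d_-=1$, and a relative Euler number computation for $\xi|_F$ giving $d_+-d_-=r(L)-tb(L)$; your second identity is exactly the transversal formula for $\Gamma$ combined with $l=tb-r$, unpacked into the local statement $\mathrm{ind}_\xi(p)=\sigma(p)\,\mathrm{ind}_{TF}(p)$ plus the half-weight convention for boundary zeros. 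What your approach buys is self-containedness (the reader need not chase \cite{[E2]} or \cite{[HE]}), at the cost of having to justify carefully the two points you yourself flag: that the relative Euler number of $\xi|_F$ with respect to the framing $\tau$ may be evaluated on the characteristic section with boundary zeros weighted by $\tfrac12$, and the local index comparison between $\xi$ and $TF$ at a tangency. Both are standard and can be checked in the explicit models of Section~\ref{sec:singularities}, so this is a gap of exposition rather than of substance; one small additional remark worth making explicit is that your use of $|tb(L)|=-tb(L)$ rests on tightness via Theorem~\ref{thm:ineq}, which is the setting in which the lemma is applied.
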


{\it Proof.\ \ } A similar fact is mentioned for transversal knots in \cite{[E2]}.
Either version can be directly derived from the more general calculation in \cite{[HE]}.
One may also easily derive the Legendrian  version from the transversal version.
Indeed, given a Legendrian knot $L=\partial F$ with NAF, and considering a small
tubular neighborhood of $L$, one may assume standard coordinates and write down an
explicit isotopy taking the usual (see \cite{[E2]}) positive transversalization $T_+(L)$ of
$L$ (whose intersection number with $F$ is $tb(L)$) to the positive
transversal knot which lies just inside any NAF boundary on the spanning surface (this
is $\Gamma$ of Figure~\ref{fig-gamma}). In the present setting let us also refer
to this curve as $\Gamma$. Recall
  from \cite{[E2]} that ${\it l}(T_+(L))= tb(L)-r(L)$ and also the transversal version of the
counting formula we are trying to establish, namely $e'_\pm-h'_\pm= {\frac{1}{2}}(1 \mp
{\it l}(T_+(L))$, where $e'_\pm, h'_\pm$ refer to those singularities inside
$\Gamma$ . Since the small collar of surface separating
$\Gamma$ from
$L$ in Figure~\ref{fig-gamma} contains no interior singularities of $F_\xi$, the number
of interior singularities is the same for $\Gamma$ and $L$, i.e.
$e_\pm=e'_\pm, h_\pm=h'_\pm$. Thus we obtain the desired formula.
\qed

 Having the interior in
reduced form as well would simply mean that in the equation of Lemma~\ref{lem-count},
$h+=e-=0$. In the spirit of this equation and also the elimination lemma
(Lemma~\ref{lem-elim}), pairs
of opposite-type but same-sign singularities
can be viewed as ``non-essential''; since, in the
equation, the contributions
made by members of such a pair cancel each other, and by
the elimination lemma (Lemma~\ref{lem-elim})
the members themselves can sometimes be ``canceled''.
Therefore, particularly
useful forms for the interior characteristic foliation on a surface are ones
where no such non-essential pairs exist, i.e. for each sign
only one type occurs, in particular: {\it reduced form} where we have only $e+,h-$
or {\it elliptic form} (to be defined in Section~\ref{ellForm})
where we have only $e+,e-$.

\subsubsection{Legendrian tree of a reduced form disk}
 We will consider {\it Legendrian trees} (see Section~\ref{sub:isoTrees})
embedded into a spanning surface.
 We assume that vertices of such trees are located at  singularities
of the  characteristic foliation, and  each edge  is either singularity-free, or
 contains exactly
one hyperbolic singularity. Usually the trees we consider will have edges
all of the same type, either all singularity-free or all hyperbolic-containing.

When the interior characteristic foliation of a disk is reduced
(and the boundary is Legendrian in NAF or is transversal) it exhibits a
Legendrian tree with elliptic singularities as vertices and hyperbolic-containing
edges. To see this note that
the only interior singularities are positive elliptic and
negative hyperbolic, and the latter have stable separatrices coming from the
former thus making up the hyperbolic-containing edges of our graph. In fact
the graph is a tree, as discussed in \cite{[E2]}.
Indeed, the unstable separatrices
of these interior hyperbolic points must go to the boundary which allows one
to show that the graph is a deformation retract of the whole disk and
is thus connected and simply-connected.

Also important for dealing with these trees is
the following observation: each positive elliptic interior point (vertex of the
tree) must be connected to {\it at least one} positive hyperbolic point
on the boundary. This follows from the tightness of $(M,\xi)$,
because if a vertex has no connection to boundary hyperbolic points
then all the negative interior hyperbolic
points to which it is connected\footnote{(whose
separatrices form the edges adjacent to that vertex)} will have unstable
separatrices flowing to boundary elliptic points in pairwise fashion (i.e.
two hyperbolic separatrices flowing to each of these boundary
elliptic points). With the help of the elliptic pivot lemma, we would  thus
obtain a closed Legendrian curve consisting only of negative elliptic
points (from boundary) and negative hyperbolic points (from interior).
This would violate the tightness of $(M,\xi)$. Indeed, using the elimination
lemma (Lemma~\ref{lem-elim}) we could pairwise eliminate all singularities
and thereby create a limit cycle.
 Figure~\ref{tight}
illustrates this for the case of a vertex with only one attached edge.

\begin{figure}
 \centerline{\epsfxsize=10cm\epsfbox{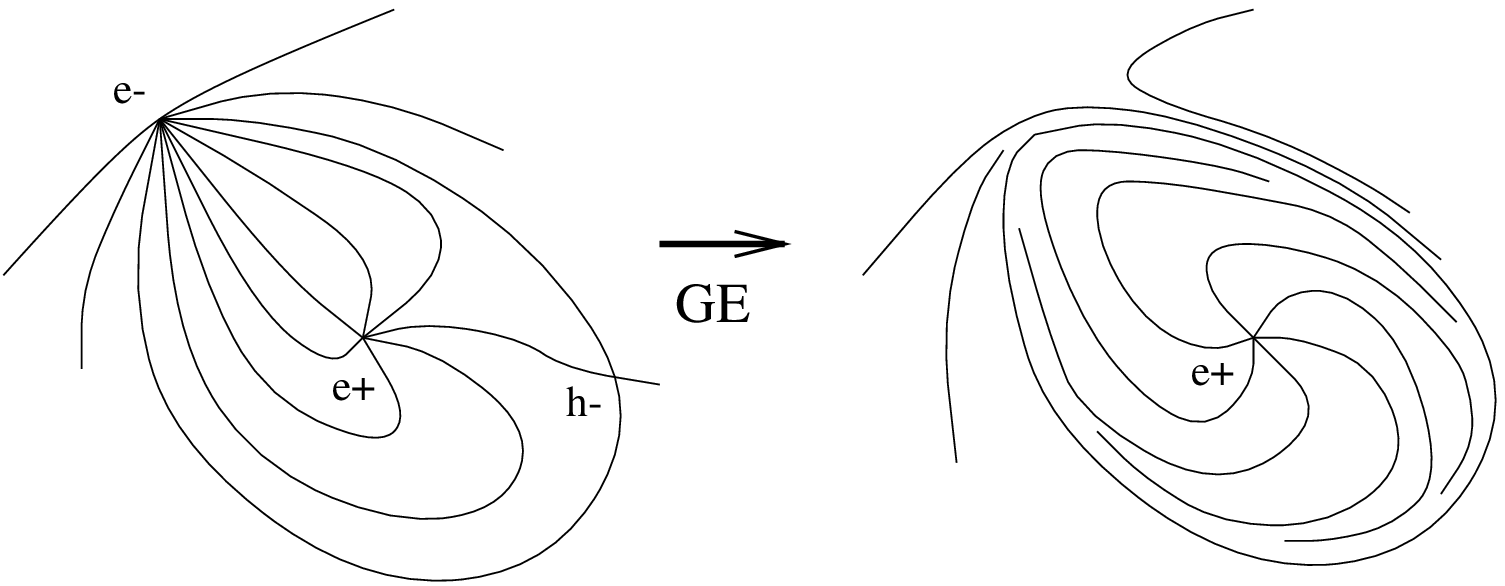}}\medskip
\caption{Tightness of $(M,\xi)$}
\centerline{$\Rightarrow$ Each $e+$ inside attached to $h+$ on $L$}
\label{tight}
\end{figure}

\subsubsection{Elliptic form on the spanning disk}\label{ellForm}
Let $D$ be a spanning
disk for the Legendrian unknot $L$. The characteristic foliation $D_\xi$ will be said to
be in {\it elliptic form} when the signs of boundary singularities alternate, all interior
singularities are elliptic positive or elliptic negative and, besides the direct connection
to its neighbor via a subsegment of $L$, each boundary point is connected only to
interior points, and moreover each (elliptic) interior point is connected
to at least 2 boundary hyperbolic points.\footnote{This last condition
is included to rule out the pathological situation with only one
interior elliptic point
and two boundary points (one elliptic, one hyperbolic); when
we have more than one interior elliptic point, it is implied by
the other conditions, and so redundant.} An example of an elliptic form
disk foliation is given in Figure~\ref{fig-simpleEFD}.
\begin{figure}
\centerline{\epsfxsize=10cm\epsfbox{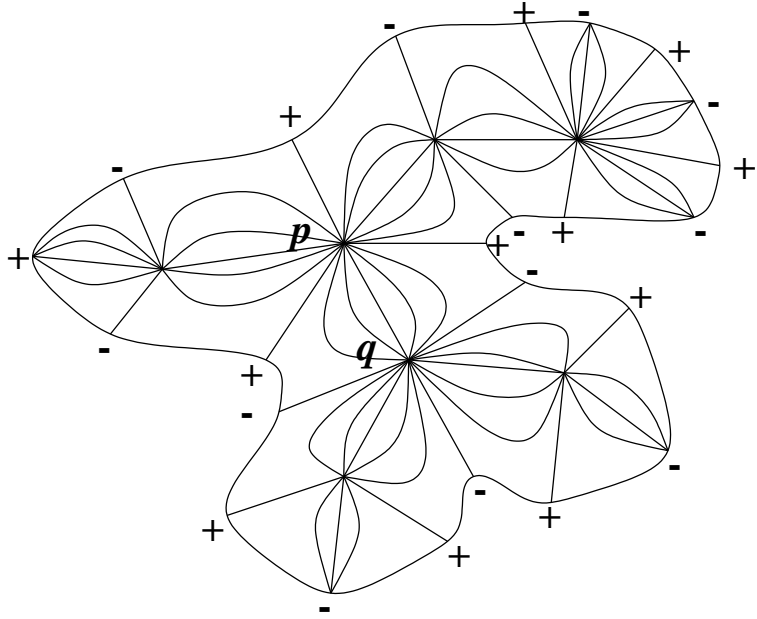}}\medskip
\caption{Example of an elliptic form disk foliation}
\label{fig-simpleEFD}
\end{figure}

These conditions imply that boundary hyperbolic points
of a given sign are connected to
each other in groups of
two or more via  their separatrices  which meet
in an elliptic interior point of that same sign. These separatrices thus divide
the surface into
regions\footnote{The word {\it region}
will be informally used in this paper to refer to a topological embedding
of the disk minus some segments of the boundary, such that the boundary is
piecewise smooth.} of
type(a) or type(b) on which there are no other interior
singularities (see Figure~\ref{fig-regionTypes}).
This is illustrated in
Figure~\ref{fig-flowSpaces}
for several vertices of a spanning disk.
\begin{figure}
\centerline{\epsfxsize=10cm\epsfbox{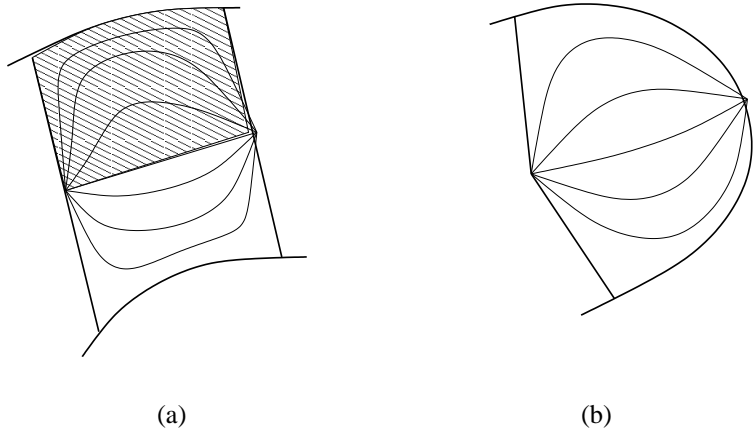}}\medskip
\caption{Types of region on disk in elliptic form}
\label{fig-regionTypes}\end{figure}
When we consider just half of a type(a) region, as shaded in
Figure~\ref{fig-regionTypes}, we will call it a semi-type(a) region.
Note, on the other hand, that if a disk can be divided into type(a) and
type(b) regions, then it automatically satisfies the properties of elliptic
form. So for an embedded disk $D$, the existence of a
decomposition of $D_\xi$ into type(a) and type(b) regions is equivalent to
$D_\xi$ being in elliptic form.
\begin{figure}[h!]
\centerline{\epsfxsize=10cm\epsfbox{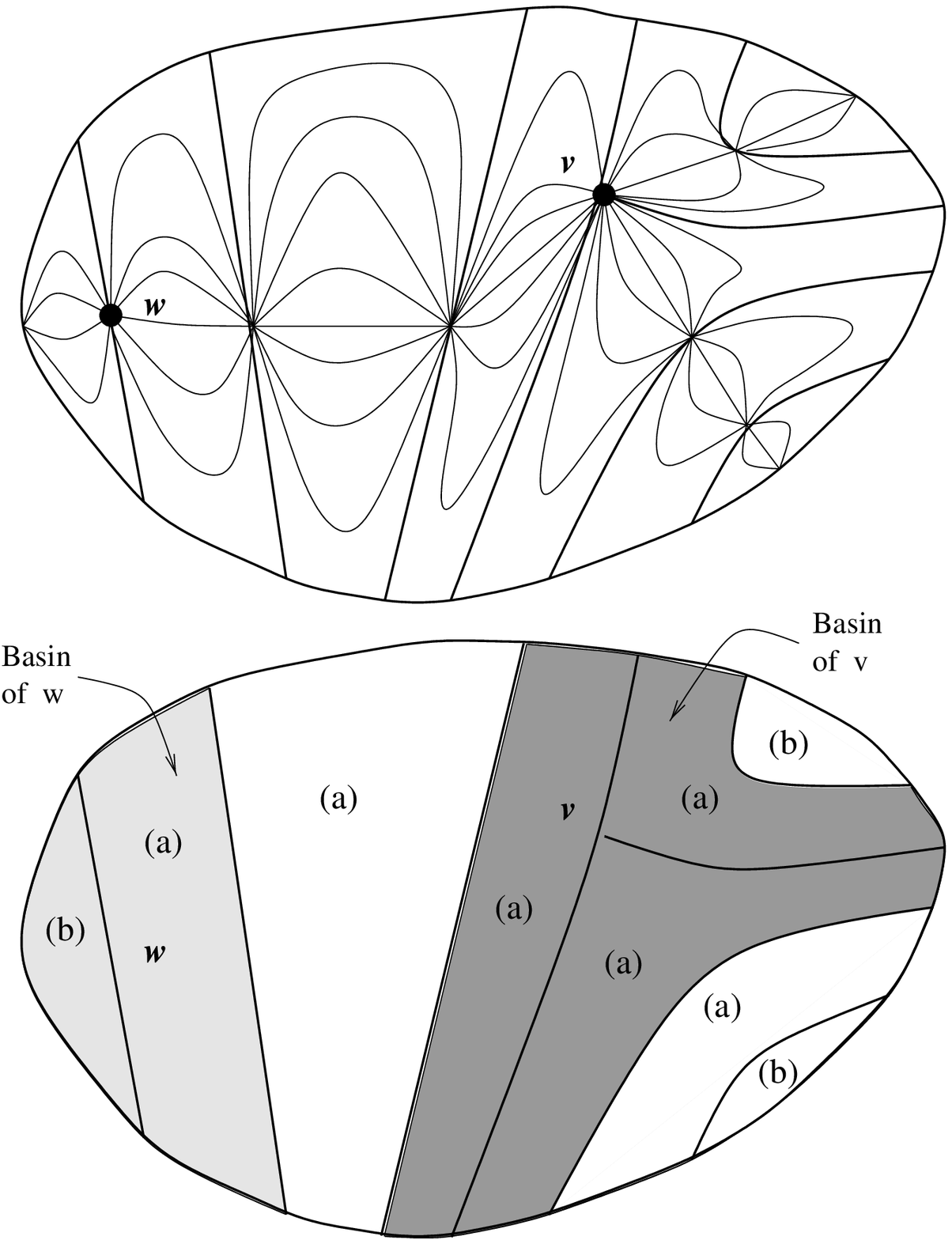}}\medskip
\caption{Example of how an elliptic form disk}
\centerline{decomposes into type(a) and type(b) regions}
\label{fig-flowSpaces}\end{figure}

It is also useful to note that if a disk with elliptic form foliation
is oppositely oriented, it is still in elliptic form; i.e. the definition is
invariant under reversal of sign of the singularities.

\subsubsection{Final step of standardization:
reduced form $\longrightarrow$ elliptic form}

The following lemma shows we can realize step 3. of the standardization
program for disk foliation, namely we can achieve elliptic form.

\begin{Lem} In tight $(M,\xi)$,
let D be an oriented embedded disk with Legendrian boundary $L$. Suppose
$D_\xi$ is in reduced form with normal absorbing boundary. Then there is a $C^0$-small
isotopy of
$D$ fixing $L$ which results in a new spanning disk having characteristic foliation in
elliptic form.\label{lem-ellForm}
\end{Lem}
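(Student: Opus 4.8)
The plan is to leave the positive singularities of $D_\xi$ essentially where they are and to \emph{invert} the negative ones. In reduced form the positive part is already almost the structure demanded by elliptic form --- an interior tree of positive elliptic sources attached to boundary hyperbolic points $h^+$ --- whereas the negative part is ``inside out'': the sinks $e^-$ sit on $L$ and the saddles $h^-$ sit in the interior. (That the target negative configuration should be the mirror image of the positive tree is just the remark that elliptic form is invariant under reversing the orientation of $D$.) So the whole content of the lemma is to trade the interior $h^-$ for interior $e^-$ and the boundary $e^-$ for boundary $h^-$, and then to check that the resulting separatrices cut $D$ into the type(a) and type(b) model regions whose existence is, as noted just before the lemma, equivalent to elliptic form.

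The workhorse is the conversion Lemma~\ref{lem-convert}, always applied in a small neighborhood of a single singularity with the Legendrian boundary $L$ playing the role of the fixed leaf $\tau$, so that $L$ never moves and the supporting isotopy is $C^0$-small. First I would traverse $L$ and convert each negative elliptic sink into a negative hyperbolic point. By Lemma~\ref{lem-convert} each such conversion (sign is preserved, so $p$ becomes $h^-$) simultaneously deposits a new negative elliptic point just inside $L$, on the very leaf $\gamma$ that previously flowed into the sink. That leaf is an unstable separatrix of an adjacent interior $h^-$, so the freshly created interior $e^-$ is joined to that $h^-$ by a single leaf and carries the same (negative) sign --- precisely the hypothesis of the elimination Lemma~\ref{lem-elim}. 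Cancelling the pair removes the interior saddle; when instead the converted sink is not fed by any surviving interior saddle, the new interior $e^-$ is simply kept. Carrying out these conversion--elimination steps along all of $L$, and bookkeeping with the counting Lemma~\ref{lem-count}, removes every interior $h^-$ and every boundary $e^-$ while installing the negative elliptic sinks and boundary saddles of the target, and the boundary signs stay alternating throughout.

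To reach genuine elliptic form I would finish with the elliptic pivot Lemma~\ref{lem-pivot} together with the tree isotopy Lemma~\ref{lm:Leg-ambient}, rotating the rays issuing from each interior elliptic point (of either sign) so that it is joined to at least two boundary hyperbolic points of its own sign; this also repairs the one case where a positive source began life attached to a single $h^+$. These separatrices then partition $D$ into type(a) and type(b) regions, which is elliptic form. Throughout, tightness of $(M,\xi)$ is used to exclude closed leaves and limit cycles, guaranteeing that every separatrix terminates either on $L$ or at a singularity, and the generic absence of hyperbolic--hyperbolic connections is restored after each step by a $C^\infty$-small perturbation, exactly as in the proof of Lemma~\ref{lem-stdize}.

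The difficulty is global rather than local. Each conversion, elimination, and pivot is a transparent local model, but one must verify that carrying them out in sequence keeps $D$ \emph{embedded}, produces no surviving interior hyperbolic singularities and no forbidden hyperbolic--hyperbolic separatrices, and leaves every interior elliptic point with its required pair of boundary connections --- in other words, that the local moves assemble into an honest type(a)/type(b) decomposition of the entire disk. Controlling this global bookkeeping, while keeping the accumulated isotopy $C^0$-small and fixing $L$, is where the real work lies; the local lemmas only certify that no single step does any harm.
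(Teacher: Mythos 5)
Your toolkit (conversion with $L$ in the role of $\tau$, followed by elimination of same--sign pairs) is the same as the paper's, but the way you deploy it diverges from the paper's proof, and the divergence creates real problems. The paper does \emph{not} convert every boundary $e^-$: it first cuts $D$ along the set $P$ of stable separatrices of the boundary $h^+$'s (together with those points and the interior $e^+$'s they run into), and in each complementary region meeting $L$ in $n$ segments it converts only $n-1$ of the $n$ boundary $e^-$'s, so that exactly one negative elliptic point survives per region; the regions with $n=1$ are left untouched and become the type(b) regions of the final decomposition. If, as you propose, you convert \emph{all} boundary $e^-$'s, then in every $n=1$ region you manufacture an interior $e^-$ whose only boundary partner is the single new $h^-$; and on the positive side each boundary $h^+$ still sends exactly one separatrix into the interior, so an interior $e^+$ of the reduced--form tree that was attached to just one boundary $h^+$ stays attached to just one. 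Both situations violate the clause of elliptic form requiring every interior elliptic point to be joined to at least two boundary hyperbolic points of its sign; already for $tb=-1$ your procedure terminates in a disk with two interior elliptic points each attached to a single boundary hyperbolic point, which is not elliptic form.

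The repair you offer --- the elliptic pivot Lemma~\ref{lem-pivot} together with Lemma~\ref{lm:Leg-ambient} --- cannot do this job: those lemmas isotope Legendrian \emph{curves} inside a fixed contact structure; they neither deform the disk nor change its characteristic foliation, so they cannot create a new separatrix connection between an interior elliptic point and a second boundary hyperbolic point. The paper's remedy is again a foliation surgery: when a component of $\partial R\setminus L$ is a single pendant separatrix from a boundary $h^+$ to an interior $e^+$, it converts that $h^+$ into a boundary $e^+$ and eliminates the newly created interior $h^+$ against that $e^+$, removing the offending vertex altogether (and producing a type(b) region). Finally, the ``global bookkeeping'' you defer at the end is not a residual technicality: it is precisely what the $P$--decomposition supplies, since inside each region the $n$ boundary sinks and the $n-1$ interior saddles form a tree whose surgery can be performed and whose resulting type(a)/type(b) structure can be checked locally, region by region. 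Without some such decomposition your sequence of local moves is not shown to terminate in elliptic form, and in the cases above it demonstrably does not.
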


{\it Proof.\ \ \ }
We are assuming $D_\xi$ is reduced so it has a Legendrian tree $T$.
Consider the singularities along $L$. Every positive singularity is
hyperbolic, and so connected to only one interior singularity.
The negative singularities, on the other hand, are elliptic and may be
connected to several negative interior singularities (which are hyperbolic).
We want to alter the foliation so that each boundary singularity is
attached to only one interior singularity of its same sign.

The strategy for accomplishing this will be to
use the positive boundary singularities' unique
connections to interior points in order to divide the disk into regions
that are easier to deal with.
Let $P \subset D_\xi$ consist of the stable separatrices of positive
boundary hyperbolic points, together with those points, and all positive
interior elliptic points. This divides $D$ into regions: the connected
components of $D\setminus P$. Every region
meets $L$ in a disjoint union of open Legendrian segments
which contain
exactly one singularity (negative elliptic). Those regions which
meet $L$ in a single segment are actually of type(b)\footnote{(see
Figure~\ref{fig-regionTypes})}, having a positive elliptic point on the
interior and a negative elliptic point on the boundary. These we will
not alter. Suppose, on the other hand, that $R$ is a region meeting
$L$ in more than one segment, i.e. that
$R\cap L$ has $n$ components for $n\geq 2$.
These are Legendrian segments.
Note that on the interior of $R$ there are $n-1$ negative
hyperbolic points. Each of these is attached to exactly 2 of the segments
just mentioned. Convert all $n-1$ negative elliptic points
on these segments to hyperbolic points (Lemma~\ref{lem-convert}),
and then cancel negative pairs (Lemma~\ref{lem-elim}). One negative interior
elliptic point remains, and no other interior points. Now consider
$\partial R \setminus L$. It
has $n$ connected
components (since $R\cap L$ did), and each component is either a
single Legendrian segment from $B$ or a union of 2 such segments.
In the first case, the segment is homotopic to a point relative to $L$
and consists of a single hyperbolic separatrix between a positive elliptic
point $p$ on the interior and a positive hyperbolic point on the boundary.
Convert the boundary point (Lemma~\ref{lem-convert}) to an elliptic point
and then cancel the newly created interior hyperbolic point with $p$
(Lemma~\ref{lem-elim}). This is illustrated for a typical region $R$
in Figure~\ref{fig-getNegCluster}.
Note that every time we carry out this procedure,
we create a type(b) region, having a negative elliptic point on the
boundary. All the rest of $R$ is broken into type(a) regions by the hyperbolic
separatrices that connect interior to boundary points. So $R$ can be
decomposed into type(a) and type(b) regions.

\begin{figure}
\centerline{\epsfxsize=14cm \epsfbox{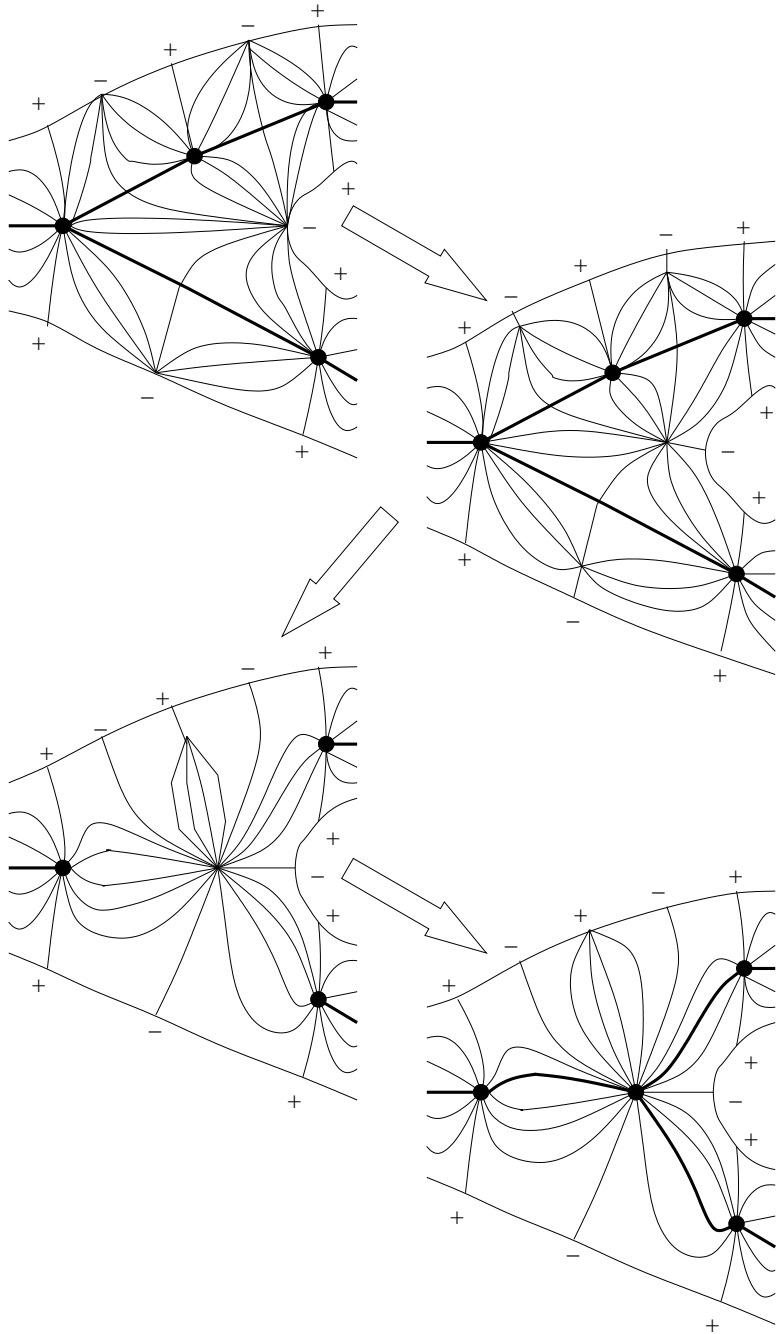}}\medskip
\caption{Example of procedure to alter region $R$}
\vfill\hbox{}
\label{fig-getNegCluster}
\end{figure}

Assume we have performed the above procedure on all regions determined by $P$.
Each is then decomposable into type(a) and type(b) regions,
so this is true of the whole disk; i.e. the disk foliation
is now in elliptic form.
\qed

\subsubsection{Legendrian trees of an elliptic form disk: skeletons and extended
 skeletons}\label{sec-skel}
Just as reduced form with normal absorbing boundary
results in an obvious Legendrian tree
on the interior, so
too does elliptic form. Suppose $D$ is an embedded disk with Legendrian
boundary $L$ whose characteristic foliation $D_\xi$ is in elliptic form.
Consider all (interior as well as boundary) elliptic singularities of $D_\xi$
and for each pair of these points which are connected by a smooth family of
singularity-free Legendrian arcs, choose one representative of this family of
arcs.
Define the {\it skeleton} of $D_\xi$ to
be the Legendrian graph that has for vertices all interior elliptic points
and for edges the representative arcs between these points.
Let the {\it extended
skeleton} be the Legendrian graph containing the skeleton but having as new
vertices the elliptic boundary points, and as new edges the representative
arcs attaching these points to vertices of the skeleton.
This is illustrated in Figure~\ref{fig-exskel} below.
We have defined the skeleton and extended skeleton as abstract graphs,
as well as giving their corresponding Legendrian embedding.
Note that as Legendrian graphs, they are well-defined by $D_\xi$
up to self-diffeomorphism of $D_\xi$ and choice of representative
arcs. As abstract graphs, they are thus well-defined up to graph isomorphism
given the topology of $D_\xi$.

\begin{figure}
\centerline{\epsfxsize=12cm\epsfbox{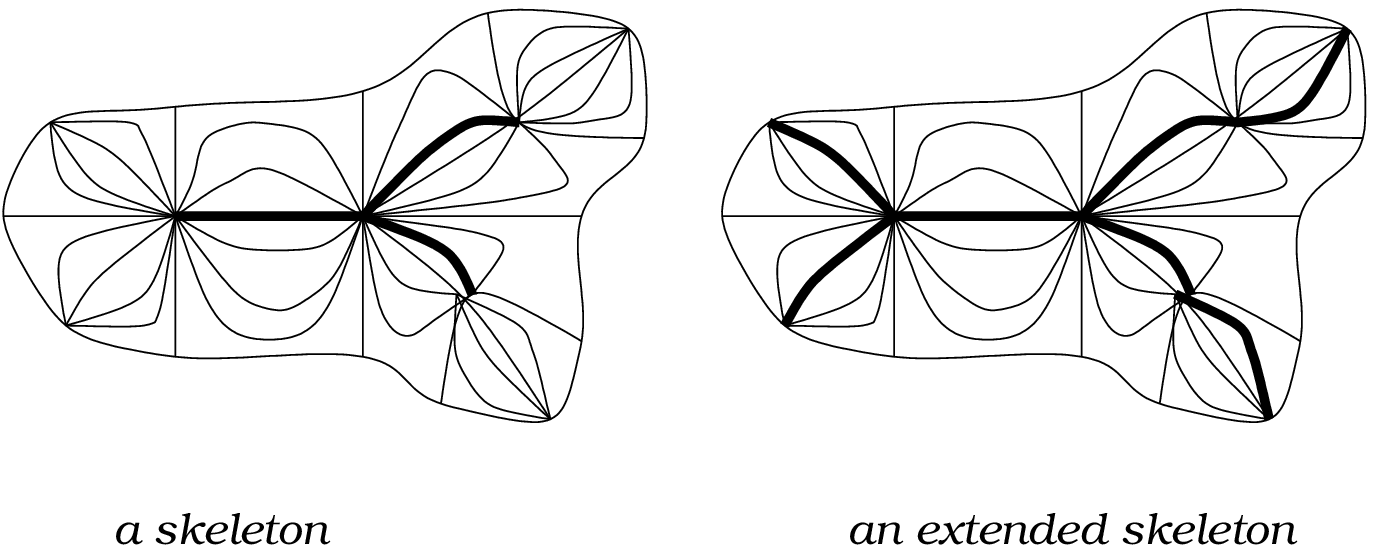}}\medskip
\caption{Extending the skeleton}
\label{fig-exskel}
\end{figure}

Moreover, the
skeleton and extended skeleton are both deformation retracts of the
disk. This follows from the decomposition of an elliptic form disk into
type(a) and type(b) regions as described in
Section~\ref{ellForm},
because on each such region, we can clearly define an appropriate deformation
retract which is standard where such regions meet. As a result, we see that
the skeleton and extended skeleton are in fact connected and simply-connected.
They are thus Legendrian trees.

\subsubsection{Graph terminology}\label{sec-twig}

In dealing with graphs, we will use the following terminology. The
{\it valence} of a vertex in an abstract graph will be defined as the
number of edges attached to that vertex. When a vertex of a tree has
valence one it
will be called an {\it end vertex}. The edge to which it is attached
will be called an {\it end edge}.
Note that a connected graph is a tree if
and only if all vertices have valence less than or equal to two.
Edges which share a common vertex are said to be
{\it adjacent edges}. As well, vertices which are the endpoints of some edge,
are called {\it adjacent vertices}.


\subsubsection{Signed tree exhibited by an elliptic form disk}

We will say the embedding $\mathcal T$ of a tree in the plane is {\it signed}
if there is a map from the set of vertices of  $\mathcal T$ to  $\{+,-\}$
such that adjacent vertices have opposite sign. Such a  map will be called
a {\it signing} of  $\mathcal T$.

Suppose  $\mathcal T$ and $\mathcal T'$ are signed embeddings of abstract trees in the
plane.
We will say they are
{\it equivalent} if there is a homeomorphism of the plane which
takes one tree to the other and respects the signings.

Recall that the skeleton of an elliptic form disk $D$ was defined as
a Legendrian embedding $\phi$
of an abstract tree $T$ into $D$. By using an embedding $\varphi$
such that $D$ is the image under $\varphi$ of the standard unit disk,
we obtain an embedding $\varphi^{-1}\circ\phi$ of $T$ into the plane.
It has a natural signing: that given by the signs of singularities
in $D_\xi$.
Observe, that the equivalence class of this signed embedded tree is well
defined by the topology of $D_\xi$, and does not depend
on the choice of $\varphi$. The same is true for the
extended skeleton.
Given a signed embedding of a tree in the plane, we will say it is
{\it exhibited} by an elliptic form disk $D$ if it is in the equivalence class
determined by the {\it extended skeleton} of $D_\xi$, and we will call
this equivalence class the {\it extended skeleton type} of $D$.

\subsubsection{Acceptable tree embeddings}

Let $\mathcal T$ be a planar embedding of an abstract tree.
We will say  $\mathcal T$ is {\it acceptable} if it
has the following properties:
\begin{enumerate}
\item it has at least one edge,
\item all edges are straight segments with slope between $\pm \epsilon$,
\item at any given vertex, there is at most one edge attached
on the left side of that vertex, all others are attached on the right
(this implies there is a {\it left-most vertex}),
\item the left-most vertex is an end vertex
(this implies there is a {\it left-most edge}).
\end{enumerate}

\section{Proof of the Main Theorem} \label{sec:main}

\subsection{General scheme of proof}
We now outline our strategy for proving the
Main Theorem. Overall, we will show any two Legendrian
unknots with a given value of $(tb,r)$ are isotopic to a lift of
the (unique) catalog front with that value (and so are isotopic to each
other).
This will be broken into the following steps.

\begin{enumerate}
\item construct from
any signed acceptable tree embedding $\mathcal T$ a
wavefront $\mathcal W_T$ with the property that
its Legendrian lift has an elliptic form  spanning disk
exhibiting $\mathcal T$ (Sections~\ref{sec:algo}, ~\ref{sec:treebased},
~\ref{sec:ess}, ~\ref{sec:frntSpDisk}).

\item establish a transversal homotopy from any wavefront
obtained by the above construction\footnote{These will be called
{\it tree-based} wavefronts.} to the
(unique) catalog front with the same value of
$(tb,r)$ (Lemma~\ref{lem:homoToCat}).

\item show that all Legendrian knots having elliptic form spanning
disks exhibiting the same embedded tree are Legendrian isotopic;
i.e. the extended skeleton type of elliptic form spanning disk
determines the Legendrian isotopy class of the (Legendrian)
boundary (Lemma~\ref{lm:shrink}).
\end{enumerate}

It is possible to avoid the use of wavefronts by completely standardizing
the disk foliation (see \cite{[F1]}), but then a more general version
of step (2) is required to finish the proof; namely, that any two
Legendrian knots with diffeomorphic spanning-disk foliation (not necessarily
of elliptic form) are Legendrian isotopic. This is shown in \cite{[F1]} using
a 1-parametric version of results from \cite{[E3]}, which although formulated in \cite{[E3]}
is not proved there. Thus we have decided instead to follow the method of proof
outlined above.
The use of wavefronts in the present
proof is similar to, though more elaborate than, the use of wavefronts
in \cite{[E2]} (transverse knot version of our Theorem).

\subsection{Wavefront arguments}\label{sec-front}

\subsubsection{Front construction algorithm}\label{sec:algo}

Suppose we
are given a signed acceptable tree embedding $\mathcal T$. The algorithm below
constructs from $\mathcal T$ a wavefront that will be denoted
$\mathcal W_T$.

Choose a small neighborhood around each vertex, so that no neighborhoods
overlap.
In the diagrams below, the thick lines
represent edges, the thin curves represent fronts, and the dotted
boxes represent these vertex neighborhoods. For each edge, we will
refer to the subsegment between the two endpoint neighborhoods as
the {\it open edge}.
The algorithm below steps through all vertices,
setting $v$ equal to the current vertex and then defining a portion of
front corresponding to $v$'s neighborhood and any open edges attached to
$v$ on the right.
We assume these portions
of front are
always connected to each other smoothly.

\medskip
{Anchor step: }

\centerline{\parbox[t]{12cm}{
Let $v$ be the left-most vertex of $\mathcal T$.
If $v$ is positive use the first front below and if $v$ is negative use the
second front to define the portion of $\mathcal W_T$ corresponding to
$v$'s neighborhood and to the (single) attached open edge.
Now set $v$ to be the right endpoint of this edge, and {\it go to
the induction step}.
}}\medskip

\centerline{\epsfxsize=8cm\epsfbox{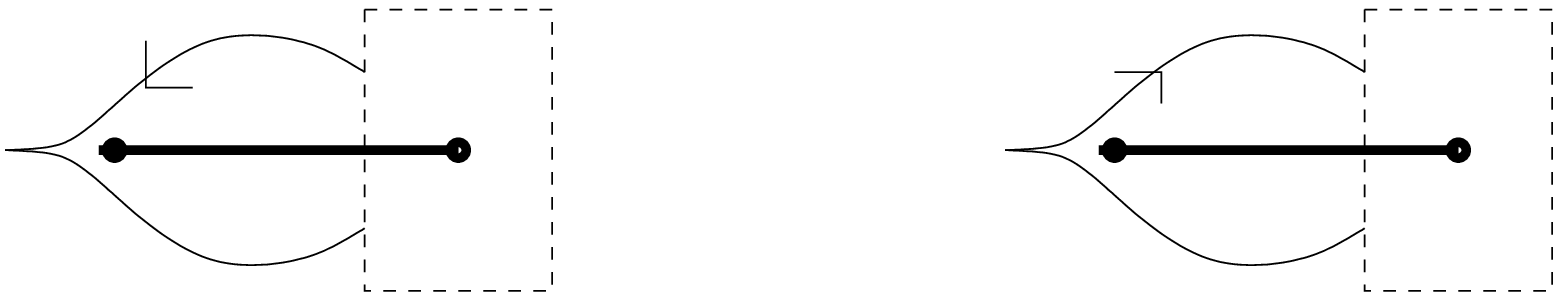}}

\medskip
{Induction step: }

\centerline{\parbox[t]{12cm}{
Let $n$ be the
number of edges attached to $v$ on the right side. We will
define the portion of $\mathcal W_T$ corresponding to $v$'s neighborhood and
to these $n$ open edges.
}}\medskip
\centerline{\parbox[t]{12cm}{
If $n=0$, then $v$ is an end-vertex.
Use the following front, and then {\it end the induction}.}}\medskip

\centerline{\epsfxsize=2cm\epsfbox{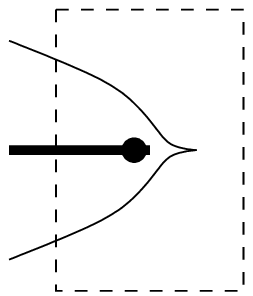}}

\centerline{\parbox[t]{12cm}{
If $n\neq 0$, then $v$ is not an end-vertex.
Replace the subtree that is attached to the right of $v$
with a new subtree that is obtained from the old
by reflecting in the horizontal axis.
Then, if $n=1$ use the front at left,
if $n>1$ use the front at right.
Now, number the $n$ edges attached to $v$ on the right
sequentially from $1..n$ starting at the top.
For each $i=1..n$, set $v$ to the right endpoint of the
$i$'th edge and {\it repeat the induction step} for this $v$.
}}

\centerline{\epsfxsize=12cm\epsfbox{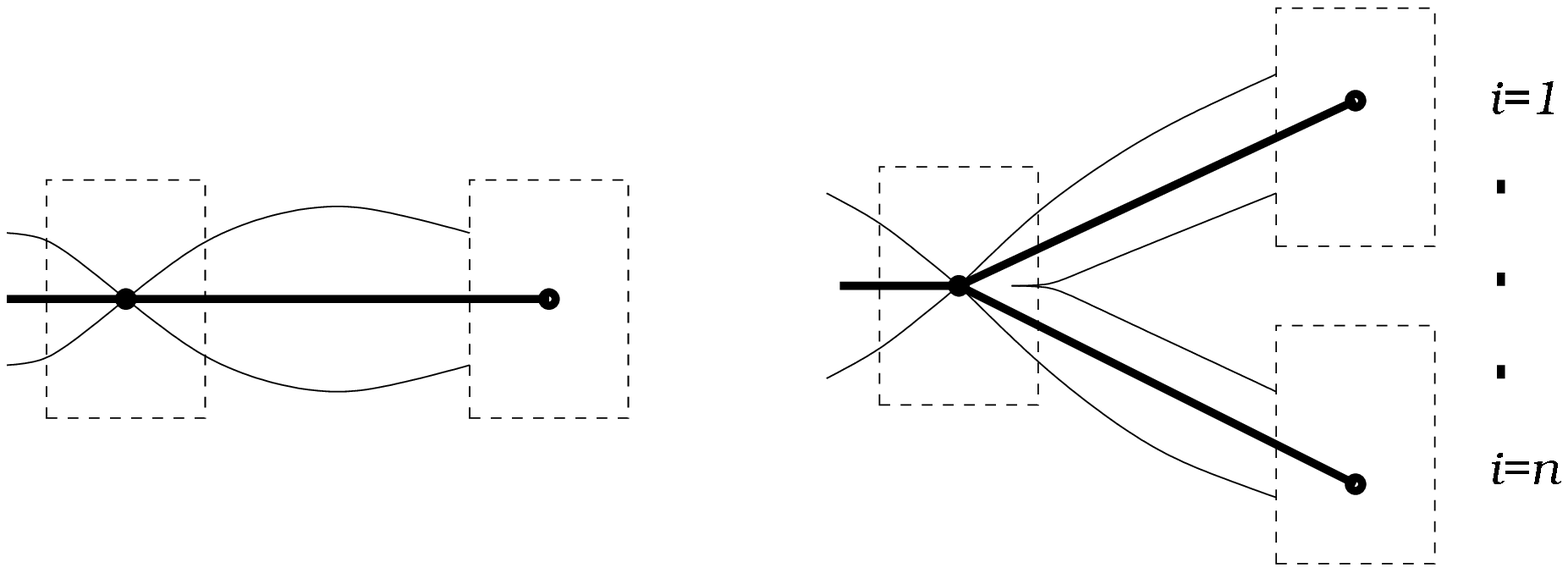}}

It is useful to note that as a result of this construction:
a cusp in $\mathcal W_T$ corresponding to a positive (end) vertex in $\mathcal T$
is oriented downward, as are both arcs of a self-intersection point
(i.e. front crossing)
corresponding to
a positive (non-end) vertex. The opposite is true for negative vertices.

\subsubsection{Tree-based wavefronts}\label{sec:treebased}

Define a {\it tree-based front} to be any front that is obtained as
$\mathcal W_T$ for some signed acceptable tree embedding $\mathcal T$.

Suppose $T$ is an abstract tree with the property that at most
one of its vertices
has valence greater than two, and at most one of the edges attached to this
vertex is a non-end edge. We will then say $T$ is
{\it almost linear}. Note that the class of catalog fronts is exactly
the class of tree-based fronts of the form $\mathcal W_T$,
such that $\mathcal T$ is a signed
acceptable embedding of an almost linear tree.

\subsubsection{Simplifying fronts}\label{sec:simplification}

A transversal homotopy of fronts lifts  to a Legendrian isotopy between the
corresponding Legendrian knots. Indeed,
this follows from the fact that we can canonically lift each transitional front in the
homotopy of fronts and
 this lift (a Legendrian knot) will be  embedded iff each self-intersection point of the
corresponding front is transversal (recall from Section 1.2).
\medskip

\begin{Lem} Any tree-based
front can be transversally homotoped to the catalog front
with that value of $(tb,r)$.
\label{lem:homoToCat}\end{Lem}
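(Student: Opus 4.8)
The plan is to reduce any tree-based front $\mathcal W_T$ to the almost-linear case by a sequence of transversal homotopies that simplify the branching structure of the underlying tree $\mathcal T$, while keeping track of the invariants $(tb,r)$ throughout. Since $(tb,r)$ can be read off combinatorially from the crossings and cusps of a front (via the formulas in Section~\ref{sec:catalog}), and since each catalog front is the unique $\mathcal W_T$ coming from a signed acceptable embedding of an \emph{almost linear} tree (Section~\ref{sec:treebased}), it suffices to show that the front of an arbitrary acceptable $\mathcal T$ can be transversally homotoped to the front of an almost linear tree with the same $(tb,r)$. I would carry this out by induction on the number of ``high-valence'' or ``branching'' vertices of $\mathcal T$ — those that obstruct almost-linearity, namely vertices of valence $>2$ having more than one non-end edge.

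The key geometric move is a local front manipulation that trades one branch for another without changing the isotopy class of the lift. First I would isolate, using the explicit description of the front-construction algorithm of Section~\ref{sec:algo}, exactly how a branch vertex contributes to $\mathcal W_T$: a non-end vertex of valence $n$ on the right contributes the crossing-and-cusp pattern recorded at the end of that section, and the attached subtrees are assembled by the reflection-and-recursion rule. The goal is to produce a local transversal homotopy that slides one subtree ``past'' another — concretely, a finite sequence of the standard generating moves for fronts (triple-point moves, and cusp/crossing passages that keep all crossings transverse so that the canonical lift stays embedded, exactly as noted in Section~\ref{sec:simplification}). Each such move leaves the algebraic count of crossings and signed cusps invariant, so $tb$ and $r$ are preserved; this must be verified against the formulas $tb=-\sum or(p)-\tfrac12\sum|\kappa(p)|$ and $r=\tfrac12\sum\kappa(p)$. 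The effect on the combinatorics is to reduce the number of branch vertices by one, feeding the induction.

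The main obstacle I expect is organizing the local move \emph{globally} so that pushing a subtree past a branch point does not create new branching elsewhere or fail to be realizable as an honest transversal homotopy of the embedded front — i.e. ensuring at every intermediate stage that all crossings remain transverse (so that the lift stays a genuine embedded Legendrian knot rather than acquiring a tangency or a non-transverse double point). The bookkeeping of signs is the delicate part: when a subtree is reflected in the horizontal axis during the algorithm, the orientations of its cusps and crossings flip, and I must check that the simplifying homotopy respects these orientation conventions so that the invariant counts genuinely cancel as branches are merged. Once the local move is established and shown to decrease the branching count while preserving $(tb,r)$ and transversality, the induction terminates at an almost-linear tree, whose front is the catalog front of the same $(tb,r)$ by the characterization in Section~\ref{sec:treebased}, completing the proof. \qed
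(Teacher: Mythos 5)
Your overall strategy --- reduce $\mathcal T$ to an almost linear tree by a sequence of transversal front homotopies preserving $(tb,r)$, then invoke the identification of catalog fronts with fronts of almost linear trees --- is the same as the paper's. But there is a genuine gap at exactly the point you flag as ``the main obstacle'': the key local move is never constructed, only posited. You ask for a homotopy that ``slides one subtree past another'' and thereby reduces the number of branching vertices, but you give no such homotopy, and as formulated the move is both harder than necessary and not obviously realizable. Transporting an entire subtree's worth of front past a branch point is a large-scale rearrangement of the front, and it is also unclear how re-attaching a whole subtree at some other vertex \emph{decreases} the branching count rather than merely relocating the branching (the target vertex's valence goes up). A side remark: the $(tb,r)$ bookkeeping you worry about is automatic, since any transversal homotopy of fronts lifts to a Legendrian isotopy and $tb,r$ are Legendrian isotopy invariants; the real difficulty is producing the homotopy at all.

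The paper's resolution is to shrink the elementary move drastically: one only ever moves an \emph{end edge} from a vertex to another vertex \emph{of the same sign}. The point, visible from the front construction algorithm, is that an end edge contributes to $\mathcal W_{\mathcal T}$ precisely a zig-zag (a pair of cusps), so after a short explicit local homotopy (Figure~\ref{fig-frntSimplif1}, applied near the vertices losing or gaining the edge) the fronts $\mathcal W_{\mathcal T}$ and $\mathcal W_{\mathcal T'}$ differ only in the placement of an upward or downward zig-zag along an otherwise identical front. The lemma then reduces to showing that a zig-zag can be transported along a connected front: past a transverse crossing this is immediate, and past a cusp it is the same homotopy of Figure~\ref{fig-frntSimplif1} run forwards and backwards. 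Repeated application of this end-edge move is what reduces an arbitrary signed tree to an almost linear one (the footnote in the paper notes this is why one gets only \emph{almost} linearity, since edges move only between same-sign vertices). Without the end-edge/zig-zag correspondence and the zig-zag transport argument, the induction you describe cannot be carried out.
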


{\it Proof.\ \ \ } Consider two signed planar tree embeddings  $\mathcal T$
and $\mathcal T'$.
Suppose $\mathcal T'$ is obtained from $\mathcal T$ by moving an
end edge from one positive (resp. negative) vertex to another positive
(resp. negative) vertex. We claim
there is a transversal homotopy of
wavefronts taking $\mathcal W_T$ to $\mathcal W_{\mathcal T'}$. This proves the lemma,
since any tree $\mathcal T$ can be reduced to an almost linear tree by
a sequence of such moves\footnote{We establish (as stated) only movement of
an edge between {\it same sign} vertices, and with such a tool can only
reduce a signed tree to {\it almost} linearity - not necessarily linearity.}.
To verify the claim, consider the subtree
$\mathcal S$ which is the common part of $\mathcal T$ and $\mathcal T'$,
let $e$ be the edge appearing only in $\mathcal T$ and let $e'$ be the
edge only in $\mathcal T'$. Let us designate the two vertices $v$ and $w$
(in all three trees), such that $e$ is attached to $v$ and $e'$ is attached
to $w$. If neither of these is an end vertex in $\mathcal S$,
then note that $\mathcal W_T$ and
$\mathcal W_{T'}$ differ only by the placement of an upward (resp. downward)
oriented {\it zig-zag}\footnote{(see
Figure~\ref{fig-frntSimplif1})}.
If, on the other hand, either of $v$ and $w$ is an end vertex in
$\mathcal S$, then apply the
transversal homotopy of fronts shown in Figure~\ref{fig-frntSimplif1}
- near $v$ in  $\mathcal W_T$, near $w$ in
$\mathcal T'$ (start at right and go to either top left or bottom left).
\begin{figure}
\centerline{\epsfxsize=12cm\epsfbox{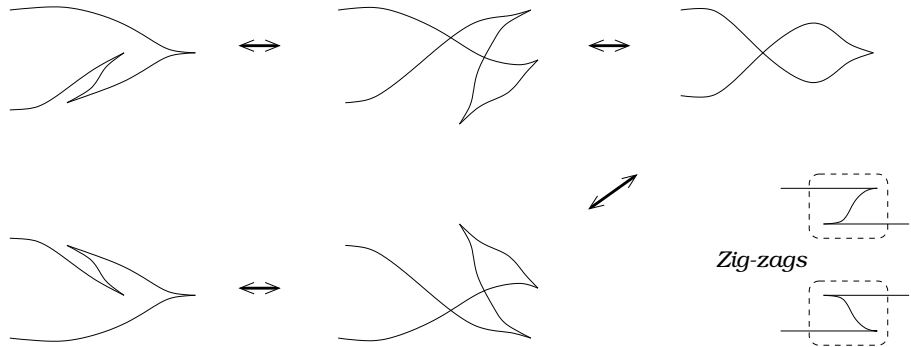}}
\caption{Transverse homotopy of fronts for proof of Lemma~\ref{lem:homoToCat}}
\label{fig-frntSimplif1}
\end{figure}
The two resulting fronts then differ only in the placement
 of an
   upward (resp. downward) oriented
zig-zag, (while $\mathcal W_S$ is identical to both
fronts with the zig-zag removed). So to prove the claim it suffices to
show that we can move an upward (resp. downward) zig-zag from any
position in a connected front to any other by a transversal front homotopy.
Clearly, we can displace the zig-zag past any transversal crossing in this
manner, so we need only check whether we can also displace it past cusps.
For an upward (resp. downward) cusp, this is clear. The same
transversal homotopy of fronts just given above
 shows we can do so for downward (resp. upward)
cusps as well (start at left, go to right and then back to left) .
\qed

\subsection{Spanning disks}

\subsubsection{Exceptional spanning disks and elliptic form disks}
\label{sec:ess}

Let $D$ be an embedded disk in $(M,\xi)$ with Legendrian boundary
$L$. If the characteristic foliation $D_\xi$ can be decomposed into
regions of the types shown in
Figure~\ref{fig-sector}, allowing each singularity curve to have
one interior corner (point of non-smoothness),
then we will say $D$ is an {\it exceptional
spanning disk} for $L$.
\begin{figure}
\centerline{\epsfxsize=12cm\epsfbox{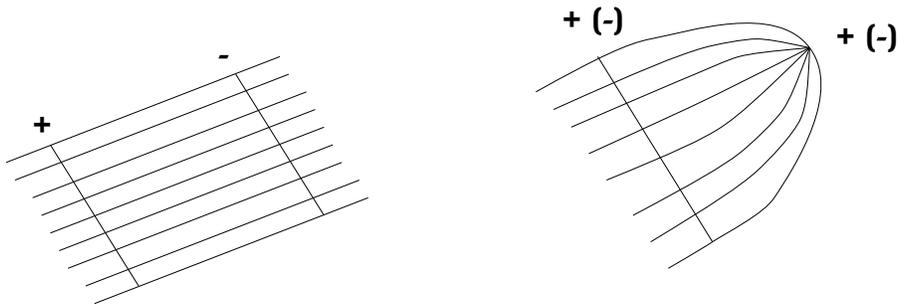}}\medskip
\caption{Types of region composing an exceptional spanning disk}
\label{fig-sector}
\end{figure}
Note that
if a spanning disk is in elliptic form then
we can convert it to an exceptional spanning disk,
by application of Lemma~\ref{lem-singCurve} (using the decomposition
into type(a) and type(b) sectors that we have for elliptic form foliations),
and vice versa. We will also, by analogy, speak of the {\it extended
skeleton exhibited by an exceptional spanning disk}.

\subsubsection{Exceptional spanning disk for the lift of $\mathcal W_{\mathcal T}$}
\label{sec:frntSpDisk}

Suppose we are
given a tree-based front $\mathcal W_{\mathcal T}$ with Legendrian lift $\mathcal L$.
We will define an exceptional spanning disk $D$
for $\mathcal L$ by specifying $D_\xi$, the family of Legendrian curves
that foliate $D$. This can be done by giving the corresponding
family of wavefronts for these Legendrian curves.
Since $\mathcal W_{\mathcal T}$ was defined
(in the front construction algorithm of section~\ref{sec:algo})
portion by portion for the edges of $\mathcal T$, we specify the relevant
wavefronts in this same manner, namely we do so for each type of
wavefront portion created in that algorithm.
This is given in Figure~\ref{fig-ess}.
\begin{figure}
\centerline{\epsfxsize=12cm\epsfbox{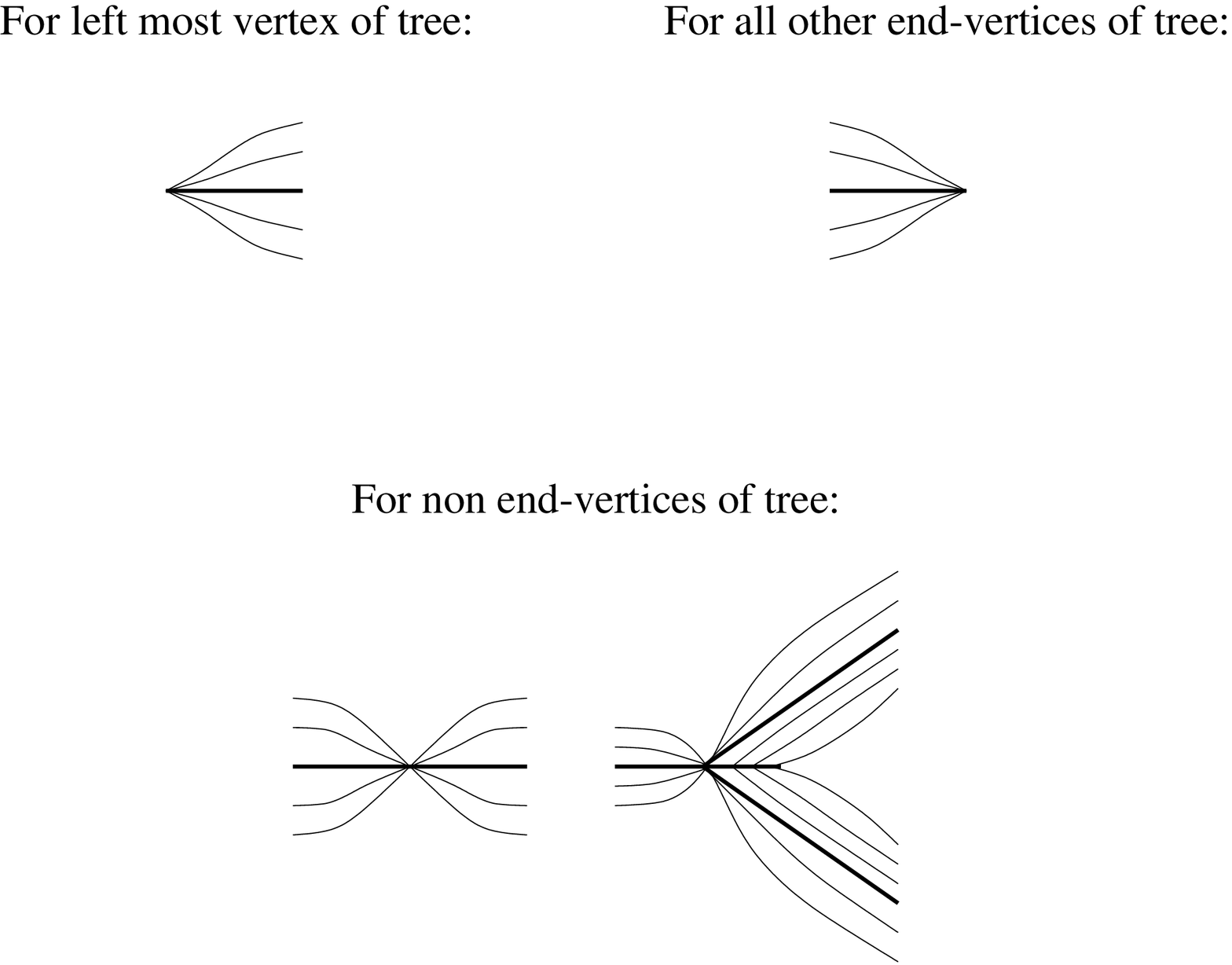}}\medskip
\caption{Construction of an exceptional
spanning disk $D$ for the lift of $\mathcal W_{\mathcal T}$
(by specifying wavefront projection of $D_\xi$ on each
portion of
$\mathcal W_{\mathcal T}$ created in the algorithm of Section~\ref{sec:algo})}
\label{fig-ess}
\end{figure}

\subsubsection{Spanning disks exhibiting the same extended skeleton}

\begin{Lem}\label{lm:shrink}
Suppose Legendrian knots $L$ and $L'$ bound  $D$ and $D'$  with diffeomorphic
characteristic
foliations in elliptic form. Then
$L$ and $L'$ are Legendrian isotopic.
\end{Lem}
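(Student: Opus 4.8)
The plan is to exploit the fact that both disks $D$ and $D'$ carry characteristic foliations in elliptic form, and that these are determined, up to the equivalence described in Section~\ref{sec-skel}, by their extended skeleton type. Since $D_\xi$ and $D'_\xi$ are diffeomorphic, they exhibit the same (equivalence class of) signed embedded extended skeleton $\mathcal T$. My strategy is to reconstruct each of $L$ and $L'$ from this common combinatorial datum by a canonical geometric procedure, and then to reduce the comparison of $L$ and $L'$ to a comparison of two contact neighborhoods built over the same Legendrian tree.

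First I would decompose each disk into its type(a) and type(b) regions using the apparatus of Section~\ref{ellForm}, so that the combinatorics of the decomposition match under the given diffeomorphism of foliations. Each interior elliptic point is modeled by a Darboux chart in which Lemma~\ref{lem-pivot} (the elliptic pivot lemma) applies; each type(a) region is swept out by a one-parameter family of Legendrian leaves emanating from the interior elliptic vertex and terminating at boundary hyperbolic separatrices. The key observation is that the extended skeleton, being a Legendrian tree by Section~\ref{sec-skel}, is an honest Legendrian graph, and by Lemma~\ref{lm:Leg-ambient} two diffeomorphic Legendrian trees can be carried one into the other by an ambient contact isotopy. Applying this to the extended skeletons of $D_\xi$ and $D'_\xi$, I may assume after an ambient contact isotopy that the two extended skeletons coincide in $M$.

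With the skeletons identified, the remaining task is to show that the two spanning disks $D$ and $D'$, which now share the same Legendrian tree as deformation retract, can be made to coincide region by region, thereby matching their boundaries $L$ and $L'$. The idea is that once the extended skeleton is fixed, each type(a) or type(b) region is determined up to Legendrian isotopy by the local data at its elliptic vertex together with the slopes of the bounding hyperbolic separatrices, and these are prescribed by the elliptic pivot lemma up to a contact isotopy supported near the vertex. I would therefore perform a ``shrinking'' of each disk onto a small neighborhood of its extended skeleton: using the flow of the characteristic foliation (directed outward toward $L$ in normal absorbing form), I can contract the boundary $L$ arbitrarily close to the skeleton while keeping everything Legendrian, and similarly for $L'$. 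Since both boundaries shrink onto the same Legendrian tree with the same infinitesimal attaching data at each vertex, the shrunken versions of $L$ and $L'$ agree, so by reversing the second shrinking we obtain the desired Legendrian isotopy from $L$ to $L'$.

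The hard part will be making precise the region-by-region matching and, in particular, controlling the Legendrian isotopy near the interior elliptic singularities where the pivoting of separatrices takes place. The elliptic pivot lemma gives the rotation of a single pair of rays, but an elliptic vertex of the skeleton may have high valence, meeting several hyperbolic separatrices at once, so I expect to need an iterated or multi-ray version of Lemma~\ref{lem-pivot} to rotate all incident leaves into a standard cyclic configuration simultaneously while preserving embeddedness. Equally delicate is ensuring that the successive region-by-region isotopies are compatible along the shared separatrices and patch to a single global ambient contact isotopy; this is exactly the sort of inductive gluing carried out in the proof of Lemma~\ref{lm:Leg-ambient}, and I would organize the induction over the vertices of the extended skeleton in the same way, at each stage keeping fixed a neighborhood of the portion of the tree already standardized.
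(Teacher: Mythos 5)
Your overall route is the same as the paper's: match the extended skeletons of $D$ and $D'$ by an ambient contact isotopy via Lemma~\ref{lm:Leg-ambient}, arrange that the surfaces coincide near the common skeleton, shrink each boundary knot toward that skeleton by an isotopy supported on the surface, and close up at the elliptic vertices with the pivot lemma. But there is a genuine gap in the central shrinking step. You propose to ``contract the boundary $L$ arbitrarily close to the skeleton while keeping everything Legendrian'' by ``using the flow of the characteristic foliation.'' On an elliptic form disk this does not work: the leaves of $D_\xi$ run from the interior elliptic vertices out to the boundary elliptic points, so a curve obtained by pushing $L$ inward is \emph{transverse} to the leaves rather than tangent to $\xi$, and is therefore not Legendrian. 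The flow along the characteristic foliation moves points along leaves; it does not carry the Legendrian curve $L$ through a family of Legendrian curves. This is exactly the difficulty the paper's proof is built to avoid: before shrinking, it first converts the elliptic form disks into \emph{exceptional spanning disks} (Section~\ref{sec:ess}, via Lemma~\ref{lem-singCurve}), whose characteristic foliation, by construction, \emph{is} a one-parameter family of Legendrian curves (with singularity curves) interpolating between $\partial D$ and the extended skeleton. Only then is the shrinking isotopy literally ``written on the surface,'' each intermediate curve being a leaf and hence automatically Legendrian. Without this conversion, or some substitute for it, your shrinking step has no justification.

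A secondary point: your worry about needing an iterated, multi-ray version of Lemma~\ref{lem-pivot} at high-valence vertices is misplaced. In the paper's argument the pivot lemma is invoked only at the \emph{end} vertices of the extended skeleton (the boundary elliptic points of valence one), where the knot makes a U-turn consisting of exactly two rays meeting at the elliptic point --- precisely the configuration $L_c$ of Lemma~\ref{lem-pivot}. The interior vertices, of arbitrary valence, are dealt with entirely by the skeleton-matching step and by the requirement that the two surfaces already coincide on a neighborhood $R$ of the common extended skeleton; the shrinking isotopy is supported away from the end-vertex neighborhoods and terminates once the knot enters $R$, so no simultaneous multi-ray pivoting is ever required.
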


{\it Proof.\ \ \ }
First, convert the elliptic form spanning disks to exceptional spanning disks
(as mentioned in section~\ref{sec:ess}).
Call these new disks also $D$ and $D'$.
Because the extended skeletons $T$ and $T'$
exhibited by $D$ and $D'$ are diffeomorphic
there exists, according to Lemma~\ref{lm:Leg-ambient} a
global contact isotopy taking
$T'$ to $T$. We may assume the surfaces coincide in a small
neighborhood $R$ of this common extended skeleton.
Written right on each surface,
there is an isotopy supported in the complement of small
neighborhoods of the end vertices which brings the portion of the
Legendrian knot outside these neighborhoods arbitrarily close to the
skeleton. This is indicated in Figure ~\ref{fig-isoAsWrit}.

Suppose that the relevant portion
of $L$ and also of $L'$ are each isotoped far enough that they reach the
region $R$ (around the common extended skeleton) where $D$ and $D'$
coincide.
By applying the Elliptic Pivot Lemma
(i.e. Lemma~\ref{lem-pivot}), we may extend each of
these isotopies
to the neighborhoods of end vertices. We thus obtain Legendrian
isotopies taking $L$ and $L'$ to the same Legendrian knot.
Thus $L$ and $L'$
are Legendrian isotopic.
\qed

\begin{figure}[h!]
\centerline{\epsfxsize=10cm\epsfbox{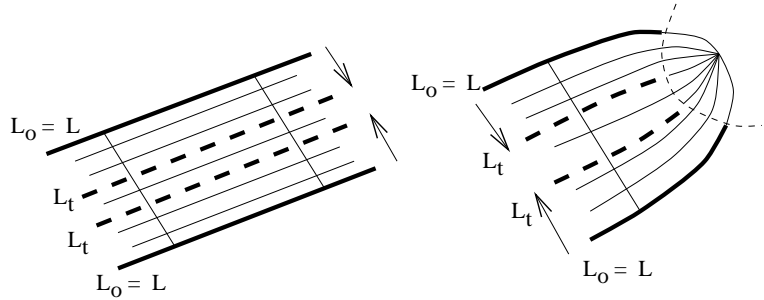}}\medskip
\caption{The isotopy written on an exceptional spanning disk}
\label{fig-isoAsWrit}
\end{figure}

\subsection{Proof of Main Theorem}\label{sec:mainProof}

{\it Proof.\ \ \ }(of Theorem~\ref{thm:main})
Given a Legendrian unknot $L$ in tight $(M,\xi)$, let $D$ be an
oriented embedded disk with $\partial D=L$. By applying Lemma~\ref{lem-stdize}
followed by Lemma~\ref{lem-ellForm}, we obtain a spanning disk $D'$ for $L$
that is in elliptic form. Let $\mathcal T$ be an acceptable signed planar tree
embedding
that is in the extended skeleton type of $D'$. Let $\mathcal W_T$ be the tree-based
front constructed from $\mathcal T$ by the algorithm of section~\ref{sec:algo}.
Let $\mathcal L$ be a Legendrian lift of  $\mathcal W_T$ to a Darboux neighborhood
in $(M,\xi)$. Then $L$ and $\mathcal L$ are Legendrian isotopic by
Lemma~\ref{lm:shrink} since $\mathcal L$ has an elliptic form spanning disk that
exhibits the same $\mathcal T$ as does the disk $D'$. Now, by
Lemma~\ref{lem:homoToCat}, all lifts like $\mathcal L$ having a
given value of $(tb,r)$ are Legendrian
isotopic to any lift of the (unique) catalog front with that value of $(tb,r)$.
So, any Legendrian unknot having the same value of $(tb,r)$ as $L$ has
will be Legendrian isotopic to $L$.

\qed

\section{Legendrian knots in overtwisted contact $3$-manifolds}\label{sec:twist}

Knots can be classified up to isotopy or up to global
diffeomorphism.
In the classical case  of knots in $\R^3$
these two problems are equivalent because the group of compactly supported diffeomorphisms of $\R^3$ is connected
(in fact even contractible, see \cite{[Hat]}). The same is true for Legendrian knots in tight
(i.e., standard) contact $\R^3$.
Indeed, according to \cite{[E3]} the group of compactly supported contact diffeomorphisms  of the standard contact $\R^3$
is connected   as well.  However, as it was observed by K. Dymara (see \cite{[D1],[D3]}
 and Corollary \ref{cor:Dymara} below), the group of coorientation preserving contactomorphisms of any closed overtwisted contact  manifold is disconnected.  
  
  We  consider in the next sections the problem of {\it coarse} classification of
Legendrian knots in an overtwisted contact manifold, i.e. the problem of classification of
Legendrian knots up to a global, coorientation preserving contact
diffeomorphism. The status of the Legendrian isotopy problem is discussed in Section
 \ref{sec:coarse-isotopy} below.

\subsection{Coarse classification of loose knots}
Let $(M,\xi)$ be an overtwisted contact manifold.
A Legendrian knot (or link) $L\subset M$ is called {\it loose} if
the restriction of the contact
structure $\xi$ to the complement $L^c=M\setminus L$ is still overtwisted.
Otherwise, i.e.
if $\xi|_{M\setminus L}$ is
tight, we will call $L$  {\it exceptional}.

One can immediately observe the following examples of loose knots.
\begin{Lem}\label{lem:someloose}
\begin{enumerate}
\item[a)]
Suppose that $(M,\xi)$ is  a non-compact manifold {\it overtwisted at
infinity}\ \footnote{
i.e. overtwisted    outside  of any compact set.
It is proven in \cite{[E6]} that $\R^3$, for instance, has a unique overtwisted at infinity contact structure.}
then any Legendrian link $L\subset M$ is loose.
\item[b)]  Let $(M,\xi)$ be any overtwisted contact manifold and $L$
a topologically trivial Legendrian knot with $tb(L)\leq 0$, then $L$ is loose.\footnote{More constraints on exceptional knots follow from 
 Theorem  \ref{thm:except} below.}
 \end{enumerate}
\end{Lem}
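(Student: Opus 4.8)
For part a), the observation is immediate. A Legendrian link $L$ is compact, hence contained in some compact set $K\subset M$. Since $\xi$ is overtwisted at infinity, there is an overtwisted disk $\Delta\subset M\setminus K$. As $\Delta\subset M\setminus L$, the restriction $\xi|_{M\setminus L}$ is overtwisted, so $L$ is loose. No use of $L$ being a knot, or of its invariants, is needed.

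For part b), the plan is to exploit the fact that the hypothesis $tb(L)\le 0$ is precisely the condition needed to place a \emph{convex} spanning disk. First I would invoke the standard input (see \cite{[Gi2],[Ho1]}) that a null-homologous Legendrian knot whose twisting number relative to a Seifert surface is non-positive --- here that number is $tb(L)\le 0$ --- bounds a spanning disk $D$ that is convex with $L$ as its Legendrian boundary. The dividing set $\Gamma_D$ then meets $L$ in exactly $2|tb(L)|$ points, and I would split into the two possibilities for $\Gamma_D$ on the disk.

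If $\Gamma_D$ contains a closed curve (which is automatic when $tb(L)=0$, since then $\Gamma_D$ is disjoint from $L$), that curve is homotopically trivial, so by Giroux's criterion a neighborhood of $D$ is overtwisted and in fact contains an overtwisted disk $\Delta$ lying in the interior part of that neighborhood, away from $\partial D=L$. Then $\Delta\subset M\setminus L$, so $L$ is loose. If instead $\Gamma_D$ consists only of boundary-parallel arcs, then by Giroux's criterion a neighborhood $N(D)$ is tight; since $D$ is a disk, $N(D)$ is a ball with $\partial N(D)\cong S^2$ carrying a connected dividing curve. Were the complement $M\setminus\Int N(D)$ also tight, then $M$ would be assembled by gluing two tight pieces along this convex $S^2$, hence tight (Eliashberg's uniqueness of the tight $B^3$, \cite{[E3]}), contradicting overtwistedness of $M$. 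Thus $M\setminus\Int N(D)$ is overtwisted, and any overtwisted disk it contains avoids $N(D)\supset L$, so again $L$ is loose.

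I expect the main obstacle to lie in the two inputs from convex surface theory rather than in the combinatorics of the dichotomy: namely (i) the existence of a convex spanning disk with $L$ as Legendrian boundary exactly when $tb(L)\le 0$ --- the degenerate case $tb(L)=0$, where $L$ behaves like a closed leaf, must be treated with care, and there one may instead argue directly that the contact-framed Legendrian pushoff $L'$ satisfies $\lk(L,L')=tb(L)=0$, hence bounds a disk in $M\setminus L$, and would violate the Bennequin inequality (Theorem~\ref{thm:ineq}) were $M\setminus L$ tight, since $tb(L')=0>-1$; and (ii) the gluing statement that capping a convex $S^2$ with connected dividing set by a tight ball preserves tightness. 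Both are standard (\cite{[E3],[Gi2],[Ho1]}), so once they are in hand the proof reduces to assembling them around the convex-disk dichotomy above.
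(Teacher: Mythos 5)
Your proof is correct, and parts of it coincide with the paper's: part (a) is dismissed as clear there too, and your fallback argument for $tb(L)=0$ (the contact-framed pushoff has linking number $0$ with $L$, hence bounds a disk in the complement on which it has $tb=0$, which is impossible in a tight complement) is exactly the paper's argument, phrased there via the Reeb flow. For $tb(L)<0$, however, you take a genuinely different route. The paper stays inside its own machinery: it puts the spanning disk into normal absorbing form along $L$, observes that if $L^c$ were tight the interior foliation could be standardized as in Lemma~\ref{lem-stdize}, so that a neighborhood of the disk would be contactomorphic to a neighborhood of a catalog disk and hence tight, and then derives tightness of all of $M$ by gluing --- a contradiction. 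You instead perturb the disk to be convex and run the dividing-set dichotomy: a closed dividing curve yields, by Giroux's criterion, an overtwisted disk near the interior of $D$ and away from $L$; no closed dividing curve yields a tight ball neighborhood of $D$, and the same gluing contradiction forces the complement of that ball to be overtwisted. The two arguments are parallel in the second branch --- both quietly rely on the fact that capping a tight complement with a tight ball along a convex sphere with connected dividing set yields a tight manifold (really Colin's gluing theorem, which the paper also invokes tersely as ``so then all of $M$ would be tight''), so this is a shared debt rather than a gap on your side. Your first branch is new relative to the paper and buys a slightly sharper conclusion, locating an overtwisted disk in a neighborhood of $D$ itself; the trade-off is that you import the Giroux--Honda convex-surface technology, which the paper deliberately avoids in favor of its explicit characteristic-foliation manipulations, so within the paper's own toolkit its version is the self-contained one.
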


{\it Proof.}
Part (a) is clear. If $tb(L)=0$ then the image of $L$ under Reeb flow for a 
short time has linking number $0$ with $L$, so it spans an overtwisted
 disk in $L^c$. 
Now suppose $tb(L)<0$. Then $L$ has a spanning disk $D$ with NAF boundary.
If $L^c$ were tight then
we could standardize the disk interior too, and thus obtain standard foliation
on the whole closed disk. This would imply
a neighborhood $B$ of $D$ is isomorphic to a neighborhood of a standard
disk (as listed in the catalog), and hence is tight. But $D^c$ is also a tight ball,
so then all of $M$ would be tight.
\qed
\medskip

\begin{Rem}{\rm In our earlier paper  \cite{[EF]} we   claimed without a proof a stronger version of 
\ref{lem:someloose}, that any homological to 0 Legendrian knot in any contact manifold is loose if it violates the Bennequin inequality \ref{thm:ineq}. This is wrong even for topologically trivial knots, as  our Theorem~\ref{thm:except}  below
shows.}
\end{Rem}

The problem of coarse classification of loose Legendrian knots is of pure homotopical nature:
\begin{Prop}\label{prop:loose-knots}
Let $L_1,L_2\subset (M,\xi)$ be two loose Legendrian knots. Suppose that there exists a diffeomorphism
$f:M\to M$ which sends $L_1$ to $L_2$, $\xi_1|_{L_1}$ to $\xi_2|_{L_2}$ and such that the plane fields
$\xi_2$ and $df(\xi_1)$ are homotopic on $M\setminus L_2$ relative to the boundary. Then $L_1$ and $L_2$
are coarsely equivalent.
\end{Prop}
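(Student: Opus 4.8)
The plan is to upgrade the diffeomorphism $f$ to a global coorientation-preserving contactomorphism by correcting it only on the complement of the knot, where looseness makes Eliashberg's $h$-principle for overtwisted structures available. Recall that coarse equivalence of $L_1$ and $L_2$ means precisely the existence of a global coorientation-preserving contactomorphism $\Phi$ of $(M,\xi)$ with $\Phi(L_1)=L_2$. The whole point of the hypothesis that the $L_i$ are \emph{loose} is that $\xi$ is overtwisted on each complement $M\setminus L_i$; this is exactly what will let me invoke the classification of overtwisted contact structures from \cite{[E5]} on the knot complement.

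First I would normalize $f$ near the knots. Since $f(L_1)=L_2$ and $df$ carries $\xi|_{L_1}$ to $\xi|_{L_2}$, the two Legendrian knots together with their contact germs are matched by $f$ along $L_1$; hence, by the Darboux--Weinstein theorem (diffeomorphic Legendrian submanifolds have contactomorphic neighborhoods), I can alter $f$ inside an arbitrarily small tubular neighborhood $N(L_1)$, keeping it fixed on $L_1$, so that the new map is a genuine contactomorphism of $N(L_1)$ onto a tubular neighborhood $N(L_2)$. This modification is supported in $N(L_1)$ and is isotopic there to the original $f$ rel $L_1$, so it does not change the homotopy class (rel boundary) of the pushed-forward plane field on the complement, nor the coorientation. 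After this adjustment $f\colon (M,\xi)\to (M,\eta)$ is a contactomorphism for the pushforward structure $\eta:=f_*\xi$, and $\eta$ coincides with $\xi$ on all of $N(L_2)$.

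Now set $M_0:=M\setminus\Int N(L_2)$, a compact manifold with torus boundary. On $M_0$ I have two cooriented contact structures, $\xi$ and $\eta$, which (i) agree near $\partial M_0$; (ii) are both overtwisted, since $L_2$ (resp.\ $L_1$, through $f$) is loose and the overtwisted disk can be placed in the interior; and (iii) are homotopic rel boundary as cooriented plane fields, by hypothesis (preserved under the normalization above). The relative form of Eliashberg's classification \cite{[E5]} then applies: $\xi$ and $\eta$ are homotopic through overtwisted contact structures rel $\partial M_0$, and by the relative Gray stability theorem this homotopy is realized by an isotopy $\psi$ of $M_0$ that is the identity near $\partial M_0$ and satisfies $\psi^*\xi=\eta$. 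Extending $\psi$ by the identity over $N(L_2)$ (where $\eta=\xi$) yields a coorientation-preserving contactomorphism $\psi\colon (M,\eta)\to(M,\xi)$ fixing $N(L_2)$ pointwise, so $\psi(L_2)=L_2$.

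Finally, the composition $\Phi:=\psi\circ f\colon (M,\xi)\xrightarrow{f}(M,\eta)\xrightarrow{\psi}(M,\xi)$ is a coorientation-preserving contactomorphism with $\Phi(L_1)=\psi(L_2)=L_2$, i.e.\ the required coarse equivalence. The main obstacle is the third step: one must verify that the knot complement genuinely satisfies the hypotheses of the relative overtwisted classification --- both structures overtwisted \emph{in the interior}, agreeing near the boundary, with the plane-field homotopy surviving the normalization of $f$ --- and then quote the relative, coorientation-respecting version of \cite{[E5]} together with relative Gray stability. The first two steps are essentially formal once looseness supplies the overtwistedness of the complement.
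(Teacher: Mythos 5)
Your proposal is correct and follows essentially the same route as the paper: normalize $f$ near the knot via Darboux--Weinstein so that it becomes a contactomorphism on a tubular neighborhood, then use looseness to invoke the relative classification of overtwisted contact structures from \cite{[E5]} on the complement, producing an isotopy fixed near the knot that corrects $f_*\xi$ to $\xi$, and compose. The only difference is presentational -- you spell out the intermediate Gray-stability step and the verification that the normalization preserves the rel-boundary homotopy class, which the paper's proof leaves implicit.
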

\begin{Cor}
Two topologically trivial  loose Legendrian knots in an overtwisted contact manifold $(M,\xi)$ are coarsely equivalent if and only if they have the same values
of $tb$ and $r$.
\end{Cor}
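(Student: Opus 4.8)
The plan is to prove the two implications separately, the forward one being immediate and the reverse one amounting to a verification of the hypotheses of Proposition~\ref{prop:loose-knots}. For the forward direction, suppose $L_1$ and $L_2$ are coarsely equivalent, so that $\phi(L_1)=L_2$ for some global coorientation preserving contactomorphism $\phi$. Such a $\phi$ preserves the coorientation of $\xi$, hence the orientation of $M$ determined by $\alpha\wedge d\alpha$, and it carries a spanning disk of $L_1$ to one of $L_2$. Since $tb$ is read off from the contact framing relative to the disk framing, and $r$ from the winding of the oriented tangent in a trivialization of $\xi$ over the disk (see Section~\ref{sec:tbr}), both quantities are invariant under $\phi$; thus $tb(L_1)=tb(L_2)$ and $r(L_1)=r(L_2)$. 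Here one keeps the knot orientations matched, as $r$ reverses sign under orientation reversal.

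For the converse, assume $tb(L_1)=tb(L_2)$ and $r(L_1)=r(L_2)$, and produce a diffeomorphism $f\colon M\to M$ realizing the three conditions of Proposition~\ref{prop:loose-knots}. First, because each $L_i$ is topologically trivial it bounds an embedded disk whose neighborhood is a ball, so each $L_i$ sits as a standard unknot inside a ball of the connected manifold $M$; sliding one ball onto the other shows $L_1$ and $L_2$ are ambiently isotopic, producing an $f$ \emph{isotopic to the identity} with $f(L_1)=L_2$ as oriented knots. Next I would fine-tune $f$ inside a tubular neighborhood $N(L_2)$ so that $df$ carries $\xi|_{L_1}$ to $\xi|_{L_2}$. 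The only obstruction to this matching is the discrepancy between the contact framings of $L_1$ and $L_2$ measured against the disk framings transported by $f$; as framings form a $\Z$-torsor and this discrepancy equals $tb(L_1)-tb(L_2)=0$, the adjustment can be carried out while keeping $f(L_1)=L_2$. After this step $df(\xi)$ and $\xi$ agree along $L_2$ and, after shrinking, on the boundary torus $\partial N(L_2)$.

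It remains to check that $df(\xi)$ and $\xi$ are homotopic as cooriented plane fields on $W=M\setminus\operatorname{Int}N(L_2)$ relative to $\partial W$, after which Proposition~\ref{prop:loose-knots} delivers the coarse equivalence. Trivializing $TM$ (a closed oriented $3$--manifold is parallelizable), plane fields become maps to $S^2$, and the obstructions to a homotopy rel $\partial W$ of two maps agreeing on $\partial W$ live in $H^2(W,\partial W;\Z)$ (primary) and $H^3(W,\partial W;\Z)\cong\Z$ (secondary, a $d_3$/Hopf-type invariant). The primary obstruction is the relative winding class over a meridian disk, which is exactly governed by $r(L_1)-r(L_2)$ and so vanishes; the congruence $tb+r\equiv 1\,(\mathrm{mod}\ 2)$ of Proposition~\ref{prop:congr} ensures the framing and winding data are mutually consistent. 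The main obstacle I anticipate is the secondary $\Z$--valued obstruction: the clean way to kill it is to split the global $d_3$ over $M$ as a sum of a contribution from $W$ and one from the solid torus $N(L_2)$, observe that the $N(L_2)$--contribution is a local quantity determined entirely by $(tb,r)$ of the two knots (hence equal by hypothesis), and use that $f\simeq\mathrm{id}$ makes $\xi$ and $df(\xi)$ globally homotopic so that the total $d_3$ vanishes; the complementary piece on $W$ is then forced to vanish as well. Carrying out this bookkeeping carefully — verifying that $(tb,r)$ exactly exhausts the local homotopy data near the knot — is the step I expect to require the most care; once it is done, Proposition~\ref{prop:loose-knots} applies and yields the coarse equivalence.
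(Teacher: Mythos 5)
Your overall strategy --- invariance of $(tb,r)$ under coarse equivalence for the forward direction, and verification of the hypotheses of Proposition~\ref{prop:loose-knots} for the converse --- is exactly the route the paper intends; the paper states the Corollary without writing out this verification, so the burden of your argument is precisely the obstruction-theoretic bookkeeping you sketch. The forward direction is fine (modulo the caveat of Remark~\ref{rem:rotchoice} about the dependence of $r$ on the choice of spanning disk when $e(\xi)\neq 0$).

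The genuine gap is in your treatment of the tubular neighborhood $N(L_2)$. You arrange only that $df$ carries $\xi|_{L_1}$ to $\xi|_{L_2}$ \emph{along the curve}, and then assert that the contribution of $N(L_2)$ to the $d_3$-splitting is ``a local quantity determined entirely by $(tb,r)$.'' That assertion is unproven and is essentially the crux: two plane fields on a solid torus that agree on the boundary still range over a full $\Z$ of rel-boundary homotopy classes (the secondary obstruction lives in $H^3(N,\partial N;\Z)\cong\Z$), so nothing yet forces the $N(L_2)$-piece to vanish merely because the cores have equal $(tb,r)$; even knowing the two restrictions are contactomorphic would not by itself give a homotopy rel $\partial N(L_2)$. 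The standard repair --- and the first step of the paper's own proof of Proposition~\ref{prop:loose-knots} --- is to invoke the Darboux--Weinstein theorem to modify $f$ so that it is an honest contactomorphism from an entire neighborhood $U_1\supset L_1$ onto $U_2\supset L_2$; then $df(\xi)=\xi$ identically on $U_2$, the $N$-contribution to both obstructions is literally zero, and your additivity argument for $d_3$ (using $f\simeq\mathrm{id}$ to kill the global difference class) closes correctly. The hypothesis $tb(L_1)=tb(L_2)$ enters at exactly this point, to guarantee that the neighborhood contactomorphism can be chosen isotopic to $f$ restricted to the neighborhood (equivalently, that no longitudinal Dehn twist is required), so that the modified $f$ remains isotopic to the identity. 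A smaller inaccuracy: the ``meridian disk'' you use to detect the primary obstruction does not lie in $W=M\setminus\Int N(L_2)$; the relevant relative $2$-cycle is the punctured Seifert disk, over which the relative Euler classes of $\xi$ and $df(\xi)$ differ by $r(L_1)-r(L_2)$, and for general $M$ one must also use $f\simeq\mathrm{id}$ to dispose of the components of the primary obstruction coming from $H_1(M)$.
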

 \begin{Rem} \label{rem:rotchoice}
 {\rm If the Euler class $e(\xi)$ does not vanish, then, unlike in the tight case, the definition of $r(L)$ is ambiguous and may depend on a choice of a disk $D$ spanning the knot $L$, $ r(L)=r(L|D)$, see Section \ref{sec:tbr}. The above corollary should be understood in the sense that  if  two knots have the same $tb$, and there are spanning discs such that $r(L_1|D_1)=r(L_1|D_2)$ then there exists a coarse equivalence which sends $D_1$ onto $D_2$.  
 }
 \end{Rem}
\medskip
\begin{proof}[Proof of Proposition \ref{prop:loose-knots}]
By the Darboux-Weinstein theorem  (see Section \ref{sec:contact-basic} above) we can assume that the
 diffeomorphism $f$ sends $\xi_1$ to $\xi_2$ on a neighborhood $U_1\supset L_1$ and
 $U_2=f(U_1)\supset L_2$. 
 Then the contact structures $\xi_2$ and $\widetilde \xi_1=df(\xi_1)$ coincide on the boundary of $U_2$ and are homotopic as plane fields via a homotopy fixed on $U_2$.
 Hence, according to the classification of overtwisted contact structures in \cite{[E5]}  (see also Theorem
 \ref{thm:overtwisted} below)
 there exists an isotopy   $h_t: M \to M, t\in[0,1],$ 
 which is fixed on $U_2$ and such that $h_0=\id$, 
 $dh_1(\widetilde\xi_1)=\xi_2$. Then the composition $h_1\circ f$ is a contactomorphism $(M,\xi_1)\to (M,\xi_2)$ as required which sends $L_1$ to $L_2$.
\end{proof}

\noindent{\bf Remark.}  Fuchs and  Tabachnikov informed us that they independently observed  a similar result.
A related result in the context of Legendrian isotopy classification of loose knots is proved by K. Dymara in \cite{[D1]} under the additional assumption that there exists an overtwisted disk not meeting either $L_i, i=1, 2$, see Corollary  \ref{cor:Dymara}.2 below.
\medskip
 
\subsection{Coarse classification of exceptional knots} \label{sec:excep}

Topologically trivial {\it exceptional} knots in an overtwisted contact $S^3$ can be completely coarsely classified using the theorem
of Giroux-Honda which classifies tight contact structures on solid tori.
Let us recall that according to \cite{[E5]}  (positive) overtwisted contact structures on $S^3$ are classified by the homotopy class of the corresponding cooriented plane field, which is in turn defined by its
{\it Hopf invariant}.  Namely, let us fix a  trivialization of $TS^3$, say by the frame
$ip,jp,kp\in T_pS^3, p\in S^3$, where we view $S^3$ as the unit sphere in $\R^4$ identified with the quaternion space $\HH$.
Then any cooriented plane field gives a map $S^3\to S^2$, and we denote by $h(\xi)$ its Hopf invariant, i.e. the linking number of properly oriented pre-images of two regular points in $ S^2$.  In particular, the Hopf invariant of the standard contact structure  $\zeta$ orthogonal to the Hopf fibration is equal to $0$.    
   We will denote by $\xi_h$ the unique positive overtwisted contact structure with the Hopf invariant $h$.  
\footnote{
It is also  customary in the contact geometric  literature to use, instead of the Hopf invariant, the so-called 
$d_3$-invariant, introduced by R. Gompf  in \cite{[Go]}, see also \cite{[DGS]}. For $S^3$ the invariants $d_3$ and $h$ are related by the formula $d_3=-h-\frac12$.}

Let us recall that according to R.  Lutz (see \cite{[Lu]}), any plane  homotopy class  of contact structures can be obtained from a given one by Lutz twists.  
Let us compute the Hopf invariant of the overtwisted contact structure $\xi$ obtained from $\zeta$ by  the $\pi$-Lutz twist along one of the fibers, say $F=\{e^{it}, t\in\R/2\pi\Z\}$, of the Hopf fibration. Note that the linking number of two  Hopf fibers  is equal to $+1$.
Take  a trivialization $U=D^2\times S^1$ of the Hopf fibration near one of the fibers. Then the normal vector field
$jp, p\in F$ rotates  $-2$ times with respect to the constant framing given by the trivialization (this is why the transversal unknot represented by the fiber has self-linking number equal to $-1$).
 Suppose we perform the $\pi$-Lutz twist of the contact structure $\zeta$ along $F$. Given any unit  vector field ${\bf v}=
 a(ip)+b(jp)+c(kp)$, $p\in F$,  with constant coefficients in the basis $ip,jp,kp\in T_pS^3$ we denote by $\Gamma_{\bf v}$  the set of points in $S^3$ where the normal vector field to the contact structure points in the direction of ${\bf v}$. Then  $\Gamma_{\bf v}$  is the preimage of the point $(a,b,c)\in S^2\subset\R^3$ under the Gauss
map 
 $S^3\to S^2$ which corresponds to the contact structure. Clearly,  if $b^2+c^2\neq 0$ then  $\Gamma_{\bf v}$ is a circle spiraling around $F$ 
 minus $2$ times, exactly as does the normal vector field
$jp, p\in F$.  The Hopf invariant  $h(\xi)$ is by definition  the linking number of  $ \lk (\Gamma_{\bf v},\Gamma_{\bf v'})$ for two different vector fields
${\bf v}$ and ${\bf v}'$. Note that by a continuity argument all these spirals should be coherently oriented. On the other hand,        $\Gamma_{\bf v'}$  is isotopic to $F$  in the complement of    $\Gamma_{\bf v}$, and hence 
$$h(\xi)= \lk (\Gamma_{\bf v},\Gamma_{\bf v'})= \lk (\Gamma_{\bf v},F)=1-2=-1,$$ i.e. $\xi=\xi_{-1}$.
\medskip

Note that given a $k$-component link $L=L_1\cup\dots L_k\subset S^3$ of transversal  knots in $\zeta$ with selflinking numbers
$l_i$, $i=1,\dots, k$,   simultaneous $\pi$-Lutz twists along all components of $L$ produce an overtwisted contact structure $\xi$ with
$$h(\xi) =\sum\limits_1^k l_i+2\sum\limits_{1\leq i<j\leq k}  l_{ij},$$ where $l_{ij}=\lk(L_i,L_j)$ is the linking number of positively oriented transversal curves $L_i$ and $L_j$. In particular, one can observe
\footnote{This was pointed out to us by E.Giroux.} that  the Lutz twist along  $k$ fibers of the Hopf fibration produces a contact structure with the Hopf invariant $k(k-2)$. 

\begin{Thm} \label{thm:except}
The contact manifold $(S^3, \xi_h)$ admits an exceptional unknot if and only if
$h=-1$. Moreover,
exceptional unknots are classified up to coarse equivalence by the invariants
$tb$ and $r$ and the following is a complete list of equivalence classes:
$(tb,r)=(1,0)$, $(tb,r)=(n,\pm(n-1))$ for positive integer $n$.
\end{Thm}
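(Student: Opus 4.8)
The plan is to analyze an exceptional unknot through its tight complement and to identify the resulting closed structure by combining the Giroux--Honda classification of tight contact structures on solid tori with the Hopf-invariant computation of the present section. Let $L$ be an exceptional Legendrian unknot in $(S^3,\xi)$. Part (b) of Lemma~\ref{lem:someloose} already forces $tb(L)\ge 1$; write $t=tb(L)$. I would take a convex tubular neighborhood $N(L)$ with minimal dividing set (two curves), so that $S^3=N(L)\cup_{T^2}V$ is the genus-one Heegaard splitting with $V=S^3\setminus\Int N(L)$ a solid torus. By the definition of exceptional, $\xi|_V$ is tight. Under the standard identification of $\partial V$ (meridian of $V$ = Seifert longitude of $L$, longitude of $V$ = meridian of $L$), the dividing curves are parallel to the contact framing $\lambda+t\mu$, so the boundary dividing slope of $V$ equals $1/t$. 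Moreover the Seifert disk of $L$ is isotopic to the meridian disk $\Sigma$ of $V$ capped off by an annulus in $N(L)$, so $r(L)$ is read off from the relative Euler class $\langle e(\xi|_V),[\Sigma]\rangle$.

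The first substantive step is the classification of the tight complement. The two dividing curves of slope $1/t$ meet $\partial\Sigma$ in $2t$ points, so $\Gamma_\Sigma$ consists of $t$ properly embedded arcs and, by Giroux's formula, $\langle e(\xi|_V),[\Sigma]\rangle=\chi(\Sigma_+)-\chi(\Sigma_-)$. The Giroux--Honda classification (\cite{[Gi3],[Ho1],[Ho2]}) then enumerates the tight structures on $V$ with this boundary and pins down the possible values of this relative Euler number, hence of $r(L)$. The crucial point, and the step I expect to be the main obstacle, is to show that only the two \emph{extremal} arc configurations occur as complements of an unknot in $S^3$, giving $\chi(\Sigma_+)-\chi(\Sigma_-)=\pm(t-1)$ and hence $tb-|r|=1$. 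This is exactly the relation satisfied by every pair in the asserted list, and it is the ``reverse'' of the Bennequin bound of Theorem~\ref{thm:ineq}, as befits the overtwisted ambient. The difficulty is that the abstract Giroux--Honda count also permits intermediate rotation numbers; excluding them requires the global constraint that $V$ is glued to the \emph{standard} neighborhood $N(L)$ to recover $S^3$. I would do this by tracking the relative Euler data across the separating torus and showing that the non-extremal tight fillings cannot close up compatibly, since they would force either a tight $S^3$ (impossible, as $t\ge 1$ violates Theorem~\ref{thm:ineq}) or a loose knot.

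Next I would fix the ambient homotopy class. Since $H^2(S^3)=0$, the plane field $\xi$ is determined up to homotopy by its Hopf invariant, and I would compute $h(\xi)=-1$ by identifying $\xi$ as a single $\pi$-Lutz twist of the standard tight $\zeta$ along a transversal unknot $T$ with $sl(T)=-1$: undoing the twist reassembles the tight filling $V$ and a standard neighborhood into the unique tight contact $S^3$, namely $(S^3,\zeta)$, and a self-linking computation parallel to the Hopf-fiber computation of this section gives $sl(T)=-1$. By the formula $h=\sum_i l_i+2\sum_{i<j}l_{ij}$ established earlier, a single twist along an $sl=-1$ transversal unknot yields $h(\xi)=-1$, so an exceptional unknot can live only in $(S^3,\xi_{-1})$. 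The same Lutz model, with $L$ taken to be the various Legendrian unknots carried by the tube, realizes each pair $(1,0)$ and $(n,\pm(n-1))$, establishing existence.

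Finally, for the coarse classification it remains to see that $(tb,r)$ is a complete invariant. Given exceptional unknots $L,L'$ with equal $(tb,r)$, their neighborhoods are contactomorphic standard neighborhoods and, by the uniqueness half of Giroux--Honda, their tight complements — having the same boundary slope $1/t$ and the same extremal relative Euler number $\pm(t-1)$ — are contactomorphic relative to the boundary. Gluing the two contactomorphisms along $T^2$, after matching the germs of the contact structures along the separating convex torus, produces a global coorientation-preserving contactomorphism of $S^3$ carrying $L$ to $L'$, so $L$ and $L'$ are coarsely equivalent. Combined with the preceding paragraphs, this yields both the stated list and its exhaustiveness.
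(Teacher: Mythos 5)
Your overall strategy --- reduce to the tight complement of a standard neighborhood, invoke the Giroux--Honda classification of tight solid tori to pin down $r$, then compute the Hopf invariant of the resulting structure --- is the same as the paper's. But there are two places where the argument as written has a genuine gap. First, the step you single out as ``the main obstacle'' (excluding intermediate rotation numbers of the meridian by a global gluing constraint) is both misdiagnosed and unproven. For the complement of a standard neighborhood of an unknot with $tb=n>0$ the boundary dividing slope is an \emph{integer} in the complement's own meridian framing, and for integer slope the Giroux--Honda classification (Theorem~\ref{thm:Giroux-Honda}.2) already yields exactly the two extremal values $r(\mu)=\pm(n-1)$ --- there are no intermediate tight fillings to exclude (the continued fraction count gives precisely $2$ structures for $n\ge 2$ and $1$ for $n=1$). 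So your proposed mechanism of ``tracking relative Euler data across the separating torus'' is solving a non-problem; worse, since you never actually carry it out, the step on which you say the whole proof hinges is left as a sketch. The fix is simply to quote the classification in the correct form, which is what the paper does via Lemma~\ref{lm:complement} (which also supplies the sign: $r(\mu)=-r(L)$) together with Theorem~\ref{thm:Giroux-Honda}.

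Second, your Hopf invariant computation is not a proof as it stands. You assert that ``undoing the twist reassembles the tight filling $V$ and a standard neighborhood into the unique tight contact $S^3$,'' but this presupposes that the glued structure $(U_n,\xi|_{U_n})\cup(Q,\zeta_n^{\pm})$ contains an explicit $\pi$-Lutz tube along a transversal unknot of self-linking $-1$ whose removal yields $\zeta$. Exhibiting that tube (or, equivalently, identifying the glued structure with Dymara's explicit model $\cos f(r)\,dz+\sin f(r)\,d\theta$ with $f(1)=\tfrac{3\pi}{2}$) is precisely the nontrivial content here, and the formula $h=\sum l_i+2\sum l_{ij}$ only applies once that identification is made. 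The paper avoids this entirely by computing $h$ directly: it chooses a reference vector field $\bf{v}$ adapted to the Legendrian ruling on $\p U_n$ and to the contact vector field on $Q$, a compatible metric, and computes the linking number of the two alignment loci $\Gamma_{\pm}$, which turn out to be the cores of the two solid tori, giving $h=-1$. Relatedly, your existence argument (``the various Legendrian unknots carried by the tube'') needs the separate verification that those knots have tight complements; the paper's gluing of the explicitly constructed tight $\zeta_n^{\pm}$ gets exceptionality for free, whereas in the Lutz model it is an additional check (done by Dymara). Either supply the Lutz-tube identification honestly, or replace that paragraph with a direct computation of $h$.
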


This confirms Conjecture 41 of [EtN], which predicted   $\xi_{-1}$ to be the
only contact structure on $S^3$ for which exceptional unknots exist.
 It also implies a corrected version of  Conjecture 42 of [EtN]. This conjecture claimed
 a reduced set of possible values of $(tb,r)$. 
 John Etnyre informed us that jointly with T. Vogel they independently proved Theorem \ref{thm:except}.
 
 \medskip
 Before proving Theorem \ref{thm:except} we need to  develop some necessary preliminary information and recall some known facts.
  
\medskip 

\noindent{\bf 1. Neighborhoods of  Legendrian knots.}
 Let $\xi$  be a contact structure in a contact manifold $(M,\xi)$, and $L\subset M$   a Legendrian knot.  
Let $(x,y,z)$, $y,z\in\R$, 
$x\in\R/2\pi\Z$,  be the canonical Darboux coordinates in a neighborhood of $L$, so that the contact structure 
$\xi$ on this neighborhood is given by the  contact $1$-form 
 $dz-ydx$ and $L$ is given by equations $y=z=0$.  Passing to cylindrical coordinates
 $$(x,r,\theta)\mapsto (x,r\cos\theta,r\sin\theta), \;\;x,\theta\in\R/2\pi\Z, r\in[0,\infty).$$
 we get
 $$dz-ydx=r\cos\theta d\theta+\sin\theta dr -r\cos\theta dx,$$ and   $L=\{r=0\}$.
 Note that the characteristic foliation on the torus $T_\eps=\{r=\eps\}$ is given 
 by the form $\cos\theta(d\theta-dx)$, and thus $T_\eps$ is ruled by Legendrian curves $\theta=x+\const$, and has two {\it singularity curves} $\theta=\pm\frac{\pi}2$, where the contact structure is tangent to the torus (see  Section \ref{singCurves} above for the definition of singularity curves). \footnote{Note that $T_\eps$ is convex in the sense of \cite{[EG]}, i.e. admits a transversal contact vector field (e.g. the field $Y=z\frac{\p}{\p z}+y\frac{\p}{\p y}$). The {\it dividing curves} of a convex surface, defined by Giroux (see \cite{[Gi2]}), are the sets of points where the contact vector field is tangent to the contact plane field. They are, up to isotopy, independent of a choice of the contact vector field (and for $Y$ are given 
 by the equation $\theta=0,\pi$). Hence the number of dividing curves and their slopes are respectively the same as 
those of the singularity curves.}

 We say that a torus $T$ with a fixed coordinate  system 
 $x,\theta\in\R/2\pi\Z$ has a {\it Legendrian ruling with the slope $\lambda$} if it has a foliation by Legendrian curves
 which is isotopic to the linear foliation with the slope $\lambda$. For a rational number $\lambda=\frac pq$ the torus
  $T$ is ruled by closed  Legendrian curves in the class of the curve $qx=p\theta$.
  In particular, the torus $T_\eps$ is ruled by Legendrian curves with the  slope $1$.
  
 \begin{Lem}\label{lm:shape}
 \begin{enumerate}
\item  For any function  $\phi:\R/2\pi\to(0,\infty)$ such that $\phi'(\pm\frac{\pi}2)=0$
the torus $T_\phi=\{r=\phi(\theta)\}$  has  two singularity curves $\theta=\pm\frac{\pi}2$ and admits a Legendrian foliation with a slope $\lambda=\lambda(\phi)$. 
\item
 For any   number $\mu$ there exists a function $\phi:\R/2\pi\to(0,\infty)$, arbitrarily $C^0$-close to $\eps$  which satisfies $\phi'(\pm\frac{\pi}2)=0$ and such that  $\lambda(\phi)=\mu$. 
 \end{enumerate}
 \end{Lem}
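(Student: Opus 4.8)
The plan is to reduce both parts to a single explicit computation of the restriction $\alpha|_{T_\phi}$ of the contact form to the graph torus $T_\phi=\{r=\phi(\theta)\}$. Writing the form as in the statement, $\alpha=r\cos\theta\,d\theta+\sin\theta\,dr-r\cos\theta\,dx$, and substituting $r=\phi(\theta)$, $dr=\phi'(\theta)\,d\theta$, I obtain
$$\alpha|_{T_\phi}=A(\theta)\,d\theta-B(\theta)\,dx,\qquad A=\sin\theta\,\phi'+\phi\cos\theta,\quad B=\phi\cos\theta.$$
For Part (1), the singular points of the characteristic foliation are exactly the common zeros of $A$ and $B$. Since $\phi>0$, $B$ vanishes only on $\theta=\pm\frac{\pi}{2}$, and there $A=\sin\theta\,\phi'(\pm\frac{\pi}{2})=\pm\phi'(\pm\frac{\pi}{2})=0$ by hypothesis; hence the singular set is precisely the two circles $\theta=\pm\frac{\pi}{2}$, which are the asserted singularity curves.

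Away from these circles $B\neq0$, so the foliation is the graph of the ODE $\frac{dx}{d\theta}=\frac{A}{B}=1+\tan\theta\,(\log\phi)'(\theta)$; in particular each regular leaf is an arc, a graph over $\theta$, running monotonically from one singularity circle to the other. Since along $\theta=\pm\frac{\pi}{2}$ the contact plane contains the whole tangent plane to $T_\phi$ (the foliation is singular there), these arcs may be continued through the singularity circles in an arbitrary tangent direction, and I splice them into closed Legendrian curves ruling $T_\phi$. The homology slope of such a ruling curve is its total $x$-displacement per single turn in $\theta$, divided by $2\pi$, so I define
$$\lambda(\phi)=\frac{1}{2\pi}\int_{-\pi/2}^{3\pi/2}\Bigl(1+\tan\theta\,(\log\phi)'(\theta)\Bigr)\,d\theta=1+\frac{1}{2\pi}\int_{-\pi/2}^{3\pi/2}\tan\theta\,(\log\phi)'(\theta)\,d\theta,$$
the second term being a genuinely convergent integral precisely because $\phi'(\pm\frac{\pi}{2})=0$ forces $(\log\phi)'$ to cancel the simple pole of $\tan\theta$ at $\pm\frac{\pi}{2}$. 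This exhibits a Legendrian ruling of slope $\lambda(\phi)$ and proves Part (1); for $\phi\equiv\eps$ the second term vanishes and $\lambda=1$, matching the model torus $T_\eps$ and fixing the slope convention.

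For Part (2) I must solve $\lambda(\phi)=\mu$, i.e. prescribe the value of $I:=\int_{-\pi/2}^{3\pi/2}\tan\theta\,(\log\phi)'\,d\theta=2\pi(\mu-1)$, while keeping $\phi$ arbitrarily $C^0$-close to the constant $\eps$. The point — and the only real subtlety — is that $C^0$-smallness of $\phi-\eps$ bounds the variation of $\log\phi$ but \emph{not} the size of $I$, because $\tan\theta$ blows up at the singularity circles. I exploit this by supporting the modification of $\log\phi$ in a window of width $\delta$ about $\theta=\frac{\pi}{2}$: take $\log\phi=\log\eps+a\,v\bigl((\theta-\tfrac{\pi}{2})/\delta\bigr)$ for a fixed bump $v$ with $v'(0)=0$ (so that $\phi'(\pm\frac{\pi}{2})=0$ is preserved). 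A change of variables together with the expansion $\tan(\frac{\pi}{2}+\delta s)=-\frac{1}{\delta s}+O(\delta s)$ gives $I\sim-\frac{a}{\delta}\int\frac{v'(s)}{s}\,ds$, a quantity of order $a/\delta$ whose sign is controlled by the sign of $a$, while $\|\phi-\eps\|_{C^0}=O(a)$. Hence, fixing $\delta$ small enough and then varying $a$ continuously (using the intermediate value theorem on the nearly-linear dependence of $I$ on $a$), I can realize any prescribed $I=2\pi(\mu-1)$ with $\|\phi-\eps\|_{C^0}$ as small as desired, which is exactly Part (2).

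The main obstacle is thus entirely contained in Part (2): reconciling an arbitrarily large prescribed slope with an arbitrarily $C^0$-small deformation of the torus. Everything hinges on the pole of $\tan\theta$ at the Legendrian divides, and on keeping the bump's derivative vanishing at $\theta=\frac{\pi}{2}$ so that $T_\phi$ retains its two singularity curves there; the remaining points (convergence of $I$, positivity of $\phi$, and embeddedness of $T_\phi$ inside the Darboux neighborhood) are routine.
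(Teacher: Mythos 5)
Your proposal is correct and follows essentially the same route as the paper: restrict the contact form to $T_\phi$ to get the ODE $\frac{dx}{d\theta}=1+\tan\theta\,(\log\phi)'$, identify $\theta=\pm\frac{\pi}{2}$ as the singularity curves, define $\lambda(\phi)$ by the averaged integral, and realize any prescribed value by a $C^0$-small bump in $\log\phi$ concentrated in a shrinking window about the pole of $\tan\theta$ (the paper's explicit choice $\delta(-1+(u/\sigma)^4)$ is just a particular instance of your $a\,v((\theta-\frac{\pi}{2})/\delta)$, with the same $a/\delta$ blow-up of the integral). The only cosmetic difference is that the paper places bumps at both $\theta=\frac{\pi}{2}$ and $\theta=\frac{3\pi}{2}$, while you use one window and control the sign via $a$; both work.
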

 \begin{proof}
 The characteristic foliation defined      in coordinates $(\theta,x)$ by the equation
 \begin{equation}\label{eq:char-fol}
( \phi\cos\theta+\phi'\sin\theta) d\theta-\phi\cos\theta dx=0, 
 \end{equation} 
 or equivalently
$$ \frac {dx}{d\theta}=1+\psi'\tan\theta,$$
where $\psi(\theta)=\ln\phi(\theta)$. The condition  $\phi'(\pm\frac{\pi}2)=0$ ensures that  
 $T_\phi$ has two singularity curves $\theta=\pm\frac{\pi}2$ and that the Legendrian foliation transversely intersect these curves. To prove the second part of the lemma
 it remains to find a periodic $\psi$, which is $C^0$-close to the constant $\ln\eps$ which satisfies the condition
\begin{equation}\label{eq:int}
\frac1{2\pi}\int\limits_0^{2\pi} (1+\psi'\tan\theta)d\theta=\mu.
\end{equation}
Let us consider the function $\alpha_{\delta,\sigma}(u)=\delta\left(-1+\left(\frac u\sigma\right)^4\right)$. Then $-\delta\leq\alpha_{\delta,\sigma}(u)\leq 0 $ for $u\in[-\sigma,\sigma]$ and 
$\alpha_{\delta,\sigma}(\pm\sigma)=0$.
On the other hand $$\int\limits_{-\sigma}^\sigma\frac{\alpha'_{\delta,\sigma}}u du=\frac{8\delta}{3\sigma}\mathop{\to}\limits_{\sigma\to 0}\infty.$$
Note that for small $u$ we have $\tan(\frac\pi2+u)\simeq -u$. Take now a continuous piecewise smooth $2\pi$-periodic  function
\begin{equation*}
\psi(u)=
\begin{cases}\ln\eps+ \alpha_{\delta,\sigma_1}(u-\frac\pi2),& u\in[\frac\pi2-\sigma_1,\frac\pi2+\sigma_1],\cr
\ln\eps- \alpha_{\delta,\sigma_2}(u-\frac{3\pi}2),& u\in[\frac{3\pi}2-\sigma_2,\frac{3\pi}2+\sigma_2],\cr
 \ln\eps,&\hbox{elsewhere},
 \end{cases}
  \end{equation*}
 and smooth its corners. Then choosing appropriate sufficiently small $\sigma_1,\sigma_2$ and $\delta$ we can arrange that the integral \eqref{eq:int} takes an arbitrary value. while   the function $\psi$  
 is $C^0$-close to $\ln\eps$ and satisfies the condition
  $\phi'(\pm\frac{\pi}2)=0$. 
 \end{proof}
 Note that the Legendrian foliation on $T_\phi$ is transversal to the vector field $\partial_x$, and  hence we can orient it by the coordinate $\theta$. We will call this orientation {\it canonical}.
 
 \begin{Lem}\label{lm:ruling}
Let $\phi$ be a function as in Lemma \ref{lm:shape}.1. Consider a vector field $X$ tangent to the Legendrian foliation
on $T_\phi$, and which defines  the canonical orientation of $T_\phi$. Then $X$ extends to the solid torus $U_\phi=\{r\leq\phi\}$ as a non-vanishing vector field tangent to the contact structure $\xi$ and  the core Legendrian curve $L$, and defines the  given orientation of $L$.
\end{Lem}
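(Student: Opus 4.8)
The plan is to trivialize $\xi$ near $L$ by an explicit frame in which one basis field already does the job on the core, thereby reducing the extension problem to a single winding-number computation on a meridian disk. First I would record that in the Cartesian Darboux coordinates $(x,y,z)$ with $\alpha=dz-y\,dx$, the two fields
\[
e_1=\partial_y,\qquad e_2=\partial_x+y\,\partial_z
\]
both satisfy $\alpha(e_i)=0$ and are everywhere linearly independent, so $(e_1,e_2)$ is a global frame of $\xi$ on the whole Darboux neighbourhood. The key observation is that $e_2$ is itself a nowhere-vanishing section of $\xi$ over all of $U_\phi$ which, on the core $L=\{y=z=0\}$, equals $\partial_x$; thus $e_2$ already realizes the required tangency to $L$ and its given orientation. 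Writing a section as $X=f\,e_1+g\,e_2$ identifies non-vanishing sections of $\xi$ with maps $(f,g)$ into $\R^2\setminus\{0\}$, with $e_2$ corresponding to the constant map $(0,1)$. The task becomes: extend the boundary data coming from the Legendrian ruling on $T_\phi$ to such a map on $U_\phi$, with value $(0,1)$ on the core.

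Next I would compute the boundary data. Parametrizing a ruling leaf by $\theta$ via $y=\phi\cos\theta$, $z=\phi\sin\theta$ and the Legendrian condition $\dot z=y\dot x$, the canonically oriented tangent field on $T_\phi$ is $X|_{T_\phi}=f\,e_1+g\,e_2$ with
\[
f(\theta)=\phi'\cos\theta-\phi\sin\theta,\qquad g(\theta)=1+\frac{\phi'}{\phi}\tan\theta;
\]
the condition $\phi'(\pm\tfrac{\pi}{2})=0$ from Lemma~\ref{lm:shape} makes $g$, hence $X|_{T_\phi}$, well-defined at $\theta=\pm\tfrac{\pi}{2}$. Crucially, this data depends only on $\theta$, not on $x$. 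I would then check that $(f,g)$ never vanishes: if $f=0$ and $\cos\theta\neq 0$ then $\phi'/\phi=\tan\theta$, forcing $g=1+\tan^2\theta>0$, while at $\cos\theta=0$ one has $f=\mp\phi\neq 0$. Since the boundary data is $x$-independent, extending $X$ over the solid torus reduces to extending $(f,g)$ over one meridian disk $\{x=x_0\}$ as a nowhere-vanishing map with the prescribed boundary values and center value $(0,1)$; the only obstruction is the winding number $W$ of the loop $\theta\mapsto(f(\theta),g(\theta))$ about the origin.

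Finally I would show $W=0$. The admissible functions -- positive with $\phi'(\pm\tfrac{\pi}{2})=0$ -- form a convex set, and the non-vanishing of $(f,g)$ holds for every such $\phi$ by the computation just made. Hence $W$ is invariant along the linear homotopy from $\phi$ to a constant $\phi\equiv\eps$, for which $(f,g)=(-\eps\sin\theta,\,1)$ stays in the half-plane $\{g>0\}$ and so has $W=0$. With $W=0$ the boundary loop is null-homotopic in $\R^2\setminus\{0\}$, and a homotopy from it to the constant loop $(0,1)$ is precisely a nowhere-zero filling of the meridian disk taking the value $(0,1)$ at the center. Using the same filling on every meridian disk (legitimate by the $x$-independence of the data) produces a nowhere-vanishing $X=f\,e_1+g\,e_2$ on $U_\phi$ that extends the ruling, is tangent to $\xi$, and equals $\partial_x$ on $L$, with the given orientation.

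The main obstacle is exactly the vanishing of this winding number; everything else is the standard degree-theoretic fact that a null-homotopic boundary loop fills a disk. The homotopy-invariance trick, resting on the universal non-vanishing of $(f,g)$, is what makes $W=0$ transparent and lets one sidestep a direct, slope-dependent integral computation.
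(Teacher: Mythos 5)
Your argument is correct, but it takes a genuinely different route from the paper. The paper's proof is purely geometric: it interpolates between $T_\phi$ and a small round torus by the tori $T_{\phi_t}$ with $\phi_t=\sigma+t(\phi-\sigma)$, notes that each $\phi_t$ still satisfies $\phi_t'(\pm\frac{\pi}{2})=0$ and hence each $T_{\phi_t}$ carries a Legendrian ruling by Lemma~\ref{lm:shape}.1, and simply declares $X$ to be the field tangent to the (canonically oriented) ruling on each leaf of this foliation of $U_\phi$ by nested tori, finishing with the round tori $T_r$, $0\le r\le\sigma$, whose ruling directions converge to $\partial_x$ on the core. No trivialization of $\xi$ and no computation is needed, and tangency to $\xi$ and to $L$ is manifest. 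You instead trivialize $\xi$ by the frame $(\partial_y,\;\partial_x+y\,\partial_z)$, reduce the extension problem to filling a single meridian disk with a nowhere-zero map to $\R^2\setminus\{0\}$, and kill the winding-number obstruction by deforming $\phi$ through the convex set of admissible functions to a constant. Your computation of $(f,g)$ and the non-vanishing check are right, and the convexity trick legitimately replaces a direct integral evaluation. What the paper's approach buys is brevity and the fact that the extension is itself a field of ruling directions (which is implicitly used later, e.g.\ in the Hopf invariant computation, where $\bv$ on $U_n$ is taken tangent to the rulings of concentric tori); what yours buys is an explicit identification of the topological obstruction and an argument that does not depend on producing a ruled foliation of the whole solid torus. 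One small polish point: the coning of the null-homotopy gives a continuous filling, so you should remark that it can be smoothed (and that the value at the center, where $\theta$ degenerates, is constant, so no issue arises there).
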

 
 \begin{proof}
 For a positive number $\sigma<\min\phi$ we denote $\phi_t:=\sigma+t(\phi-\sigma)$. Then  tori $T_{\phi_t}$ foliate
 the domain $U_{\sigma,\phi}=\{\sigma\leq r\leq\phi\}$. Each of these tori admits a Legendrian foliation as in Lemma  \ref{lm:shape}.1.
 Hence the vector field $X$ extends to $U_{\sigma,\phi}$ as the vector field tangent to the  Legendrian ruling on each of
 the  tori $T_{\phi_t}$. It further extends to the solid torus $U_\sigma=\{r\leq\sigma\}$ as the vector field tangent to the Legendrian ruling on round tori $T_r$, $0\leq r\leq\sigma$. 
 \end{proof}

For what follows we will need  only    tori with rational slopes of the form  $\lambda= -\frac1n$, $n>0$, and will denote by $U_n$ the neighborhood
$\{r\leq\phi(\theta)\}$ with $\lambda(\phi)-\frac 1n$.
Any two such neighborhoods are related by a contact isotopy which fixes $L$. \footnote{It is interesting to notice that the tori with different slopes $\mu_1$ and $\mu_2$ are contactomorphic (via a Dehn twist along the singularity curve) if and only if $\mu_1-\mu_2\in\Z$, In particular, the boundary tori  $\p U_n$ and $\p U_m$ are not contactomorphic if $m\neq n$.}

\medskip
Consider a solid torus $Q=D^2\times S^1$ with the fixed coordinates $t,u\in\R/2\pi\Z$  the boundary torus $\p D^2\times S^1$, where $t\in\p D^2$ and $u\in S^1$.
We  say that a contact structure $\xi$ on  $N$ is in the {\it standard form } near $T$ with the {\it boundary slope} $\frac pq\neq 0$ if it is ruled by Legendrian curves in the class of the meridian, and has two singularity curves with the slope $\frac pq$.  The contact structure $\xi$ near $T$ can be defined by a contact
form $\lambda$ such that $\lambda|_T=\cos(u-\frac pqt)du$.  

\begin{Lem}\label{lm:complement}
Let $L$ be a topologically trivial Legendrian knot in $S^3$ with a contact structure $\xi$. Suppose $tb(L)=n$. 
Let $U_n$ be a standard    neighborhood defined above. 
\begin{enumerate}
\item Then the complementary torus $N^c=S^3\setminus \Int U_n$ is in the standard form near $T=\p N^c=\p U_n$, and the singular curves on its boundary have the slope $n$ with respect to the coordinate system given by the meridians of the solid tori $N^c$ and $U_n$.
\item Let $L'$ be any of Legendrian curves which form the Legendrian ruling. Let us orient $L'$ so it represents the same homology class of $U_n$ as $L$.
Then $r(L')=-r(L)$.
\end{enumerate}
\end{Lem}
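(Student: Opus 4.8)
The plan is to read both assertions off the two natural framings of the boundary torus $T=\p U_n=\p N^c$, using the explicit characteristic foliation from Lemma~\ref{lm:shape} and the extension section from Lemma~\ref{lm:ruling}.

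For part (1), I would first record the characteristic foliation on $T$: by Lemma~\ref{lm:shape} it is the Legendrian ruling of slope $-\frac1n$ together with the two singularity curves $\theta=\pm\frac\pi2$, and these singularity curves are exactly the longitudinal ($x$-)loops realizing the contact framing of $L$. Fix the two meridian–longitude bases of $H_1(T)$: for $U_n$ put $\mu_1=[\theta\text{-loop}]$ (bounding a meridian disk in $U_n$) and $\lambda_1=[x\text{-loop}]$; for the complementary solid torus $N^c$ let $\mu_2$ be its meridian and $\lambda_2$ its longitude. Since $L$ is unknotted, $S^3=U_n\cup_T N^c$ is the genus-one Heegaard splitting, so $\lambda_2=\mu_1$ and $\mu_2$ is the Seifert (i.e.\ $0$-framed) longitude of $L$. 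The hypothesis $tb(L)=n$ says precisely that the contact framing is the $n$-framing, i.e.\ $\lambda_1=\mu_2+n\mu_1$ (checked by $\lk(L,\lambda_1)=n$). Substituting, the ruling class $n\mu_1-\lambda_1=-\mu_2$ is the meridian of $N^c$, so $N^c$ is ruled by Legendrian meridians and hence is in standard form near $T$; and the singularity-curve class $\lambda_1=n\mu_1+\mu_2=n\lambda_2+\mu_2$ has boundary slope $n$ in the standard-form convention, in which slope $\frac pq$ corresponds to the class $p\lambda_2+q\mu_2$. This yields both statements of part (1).

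For part (2), I would use the nonvanishing section $X$ of $\xi|_{U_n}$ produced by Lemma~\ref{lm:ruling}, which is tangent to $L$ (in its given orientation) and to the ruling, so that the tangent vector along $L$ and along each ruling leaf is a positive multiple of $X$. Because $H^2(S^3)=0$ we have $e(\xi)=0$, so the rotation number of any nullhomologous Legendrian knot equals the winding of its tangent relative to a fixed global trivialization $\tau$ of $\xi$, independent of a spanning surface. Comparing $X$ with $\tau$ defines a class $w\in H^1(U_n)\cong\Z$ with $r(\gamma)=\langle w,[\gamma]\rangle$ for every Legendrian integral curve $\gamma$ of $X$, where $[\gamma]\in H_1(U_n)$; in particular $r(L)=\langle w,[L]\rangle$. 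A ruling leaf of slope $-\frac1n$, oriented by $X$, represents $-[L]$ in $H_1(U_n)$, so its rotation number is $\langle w,-[L]\rangle=-r(L)$, which is the content of part (2) once the orientation of $L'$ is fixed as in the statement.

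The two homology computations are routine once carried out in the basis $(\mu_1,\lambda_1)$ and its image under the Heegaard gluing. The genuine obstacle is the orientation bookkeeping in part (2): one must reconcile the orientation by which $X$ (and hence the extension of Lemma~\ref{lm:ruling}) is defined with the orientation of $L'$ specified in the statement, keeping track of the fact that a ruling leaf of negative slope carries the class $\pm[L]$ according to how it is oriented, and that $r$ changes sign under reversal of orientation. I expect this sign reconciliation to be the only delicate step, everything else being a direct application of Lemmas~\ref{lm:shape} and~\ref{lm:ruling} together with the vanishing of $e(\xi)$.
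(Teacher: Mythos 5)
Part (1) of your proposal is correct and is essentially the paper's argument in different notation: the paper pins down the meridian $\mu$ of $N^c$ by the conditions $e_\theta\wedge\mu=1$ and $e_x\wedge\mu=tb(L)=n$ (so $\mu=-ne_\theta+e_x$ is the ruling direction and $e_x=\mu+ne_\theta$ has slope $n$), whereas you use the genus-one Heegaard splitting and the Seifert longitude. The bookkeeping is the same and both are fine.

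Part (2) has a genuine gap, and it sits exactly at the step you defer as ``sign reconciliation.'' In your framework the formula $r(\gamma)=\langle w,[\gamma]\rangle$ holds for any Legendrian loop $\gamma\subset U_n$ whose tangent is a positive \emph{or} negative multiple of $X$, because the winding of $-X$ relative to $\tau$ equals the winding of $X$. Applying it directly to $L'$ with the orientation of the statement therefore gives $r(L')=\langle w,[L']\rangle=\langle w,[L]\rangle=r(L)$: the minus sign you gain from $[\gamma]=-[L]$ for the canonically oriented leaf is exactly cancelled by the sign change of $r$ under the orientation reversal needed to pass from $\gamma$ to $L'$. So the computation you sketch does not yield $-r(L)$ for the curve oriented as in the statement; the two flips cancel and you land on $+r(L)$. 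The paper's own proof proceeds differently: it spans $L'$ by the meridian disk $D'$ of $N^c$, spans $L$ by $D'$ together with an annulus in $U_n$ over which the tangent field of $L$ is extended by $X$, and reads off the relation between $r(L)$ and $r(L')$ from the localization of the extension obstruction to $D'$, using that $X$ is positively tangent to $L$ but induces the opposite orientation on $L'$. To complete your proof you must either carry out that obstruction-theoretic comparison (paying careful attention to the orientation with which $D'$ sits inside the spanning surface of $L$) and exhibit where the extra sign enters, or identify a discrepancy between your trivialization conventions and the paper's; as written, your argument does not establish the stated sign, and this is not a formality but the entire content of part (2).
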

\begin{proof}
1.  Let us consider the lattice $\Lambda=\Z\oplus\Z$ on the plane $\R^2$ with coordinates $(\theta,x)$.
By construction the boundary torus  $T=\p U_n$  is identified with $\R^2/2\pi\Lambda$. Let $e_\theta=(1,0),
e_x=(0,1)$ be the  vectors of the basis.
The vector $e_\theta$ corresponds to the meridian of $U_n$, while $e_x$ directs the singularity curve.
Let
 $\mu$ denote a vector corresponding to the meridian of the complementary torus $N^c$. 
We choose $\mu$ in such a way that the pair $(e_\theta,\mu)$ be a positive basis of the lattice.
 Hence $e_\theta\wedge\mu=1$. On the other hand, by the  definition of the Thurston-Bennequin invariant we must have
 $e_x\wedge \mu=n$. Hence, $\mu$ is the vector $-ne_\theta+e_x$, which is tangent to the ruling direction, as required.
 Vectors $\mu, e_\tau$ define a positive basis with respect to the orientation of $T$ as the boundary of $N^c$, and we have $e_x=\mu+
 n e_\theta$, i.e. the singularity curve has the slope $n$.

\medskip\noindent 2.  First note that the  standard orientation of the ruling is opposite to  the homological orientation as in the statement of the lemma.  
  Let us recall that the rotation number $r(L)$ is the obstruction for extending the tangent vector field to $L$ as a non-vanishing vector field tangent to $\xi$ along a surface spanning $L$ in $S^3$. By assumption, $L'$ spans a disc in the complement of $U_n$. On the other hand, the vector field tangent to $L$ and $L'$, but defining opposite homological   orientations of $L$ and $L'$, extends to $U_n$ as the vector field tangent to $\xi$. Hence, $r(L')=-r(L)$.
\end{proof}

\medskip
\noindent{\bf 2. Special contact structure on the solid torus.}
We  explicitly describe here special tight  contact structures on the solid torus  which we will need in the proof of Theorem \ref{thm:except}. 
 
Given $n \in \mathbb{N}$, let ${\mathcal F}_n$ be a characteristic foliation on the disk $D=D^2$ such that $\partial D$ is Legendrian
with $2n$ hyperbolic singularities of alternating signs and there are one central negative elliptic singularity and $n$ positive elliptic singularities only on the disk away from the boundary (see Figure~\ref{fig-sym}). Moreover 
assume that positive hyperbolic singularities on $\partial D$ are sources of the foliation
along $\partial D$ and that polar coordinates $(\rho, \alpha)$ are given on the disk
such that the foliation has rotational symmetry group $\Z_n$ and only the $2n$ Legendrian curves drawn radially, attaching boundary hyperbolic points to the center, are 
actually ever radial.
Note this disk may be realized as  a disk with Legendrian boundary 
having $(tb, r) = (-n, n-1)$.   \footnote{Alternatively the tightness of the germ of the contact structure inducing this characteristic foliation  follows  from Giroux's  criterion: that the 
dividing multicurve of the surface have
no closed closed components, see \cite{[Gi2]}.}  In fact, its characteristic foliation is  up to orientation the only possible $\Z_n$ rotationally symmetric foliation of a disk with $tb = -n$ 
boundary that can occur in a tight manifold.
This disk  is convex, according to
\cite{[Gi2]}, i.e. it admits a transversal contact vector field. It then follows that there exists a vertically invariant tight contact structure 
$\tilde \xi$ on $Z = D\times\R$ such that $D_{\tilde\xi} = {\mathcal F}_n$.
 Note that we must have $\partial_t \in \tilde\xi_p$ at some point $p$ along each
Legendrian arc between opposite sign boundary hyperbolic points. In constructing $\tilde\xi$ we may begin with a germ of contact structure on $D$ such that
this verticality of the contact structure occurs exactly once on each such arc. This means the characteristic foliation on 
$\partial Z$ has singularity curves over these points and no other singularities. We may further assume the germ of contact structure on $D$ to have $\Z_n$ rotational symmetry,
so that all of $\tilde\xi$ does as well.

\begin{figure}[h]
\centerline{\epsfxsize=8cm\epsfbox{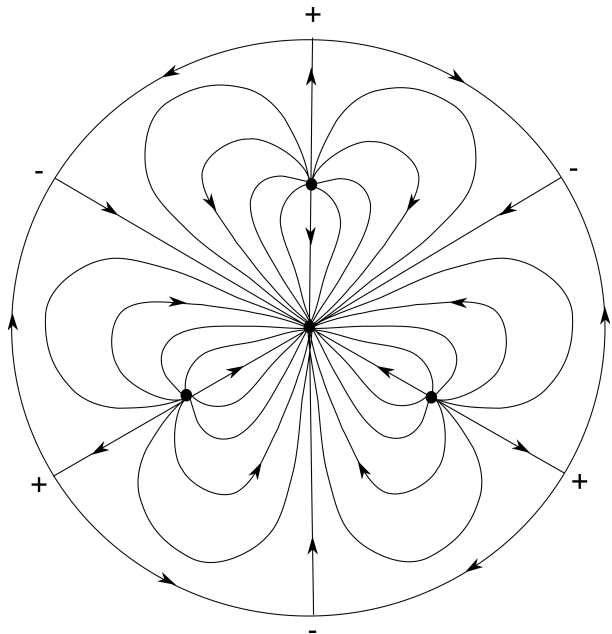}}\medskip
\caption{$\Z_n$ rotationally symmetric characteristic foliation ${\mathcal F}_n$ of a disk $D$, for the case $n=3$. 
}
\label{fig-sym}
\end{figure}

Now let $\phi$
be the composition of translation in the vertical direction by $1$ unit and rotation of the disk
by $\frac {2\pi} n$. Note that $\tilde\xi$ is invariant under $\phi$. Let $\zeta_n^-$ 
be the natural contact structure induced by $\tilde\xi$ on the quotient $Q = Z/_\phi$. Then $(Q, \zeta^-_n)$  is tight since it is covered by $(Z, \tilde\xi)$. Note that the foliation on its boundary still has exactly two singularity curves. 

We can do a similar construction, reversing the orientation of the disc $D$, i.e. beginning with the disc with one central positive elliptic singularity and $n$ negative elliptic singularities, and thus with $(tb,r)=(n,1-n)$.
We denote that resulting tight contact structure on $Q$ by $\zeta_n^+$.

In either case, we denote by $\bt$ the  contact vector field  in $Q$ which is the image of the vertical vector field $\partial_t$ under the quotient map $D\times \R\to Q$.
 
\medskip
\noindent{\bf 3. Giroux-Honda's classification of tight contact structures on the solid torus.}
The following theorem is an extract from  Giroux-Honda's classification of tight contact structures on the solid torus (see \cite{[Gi3]} and \cite{[Ho1]}).  
\begin{Thm}[Giroux, Honda]\label{thm:Giroux-Honda}
\begin{enumerate}
\item Let $\xi$ and $\xi'$ be   tight contact structure on  a solid torus $N$ which are in the standard form near the boundary with the boundary  slope $k\neq 0$.
Suppose that the Legendrian meridian $\mu$ has the same rotation number for both contact structures. Then $\xi$ and $\xi'$ are diffeomorphic by a diffeomorphism fixed on the boundary $T=\p N$.
\item Let $\xi$  be  a   tight contact structure on  a solid torus $N$ which is  in the standard form near the boundary with integer boundary  slope $n\neq 0$. Then the rotation  number $r(\mu)$ of the Legendrian meridian on its boundary is equal to $\pm (n-1)$. Moreover, both  these values of $r$ (the unique value $0$ if $n=1$) are realizable by tight contact structures.
\end{enumerate}
\end{Thm}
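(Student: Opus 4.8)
The plan is to prove both statements with Giroux's theory of convex surfaces together with Honda's bypass calculus, reducing the contact topology of $N$ to the combinatorics of the dividing set on a convex meridian disk. The decisive input is Eliashberg's uniqueness theorem for tight contact structures on the $3$-ball: once $N$ is cut open along such a disk, the germ of $\xi$ is pinned down by the dividing curves on the resulting boundary sphere, so the entire classification is governed by the dividing data of the meridian disk.

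First I would convexify. The boundary $T=\p N$ is given in standard form, hence convex, with two parallel dividing (singularity) curves of slope $k$. I would take an embedded meridian disk and, invoking the Legendrian realization principle (the meridian $\mu$ is non-isolating on $T$), arrange that $\p D=\mu$ is Legendrian and $D$ is convex. Its dividing set $\Gamma_D$ is then a properly embedded $1$-manifold whose number of boundary endpoints equals the geometric intersection $\#(\mu\cap\Gamma_T)$, a quantity fixed by $k$; in particular $tb(\mu)=-\frac12\#(\mu\cap\Gamma_T)$.

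Second I would use tightness to constrain $\Gamma_D$. By Giroux's criterion a tight neighborhood admits no homotopically trivial closed dividing curve, and since every closed curve on a disk is trivial, $\Gamma_D$ consists of $\p$-parallel arcs only. Cutting $N$ along $D$ and rounding edges produces a ball $B$ whose boundary sphere carries a single connected dividing curve --- again by tightness, now on $S^2$ --- so by Eliashberg's theorem the tight germ on $B$ is unique. Consequently $\xi$ is determined, up to isotopy fixing $T$, by the signed configuration of $\Gamma_D$, i.e. by the $\pm$-labelling of the regions of $D\setminus\Gamma_D$, for which the Legendrian meridian has rotation number $$r(\mu)=\chi(D_+)-\chi(D_-),$$ where $D_\pm$ are the positively and negatively signed regions. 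For part (1), fixing the slope $k$ together with the value of $r(\mu)$ leaves only configurations differing by a reshuffling of the arcs; such a reshuffling is induced by a diffeomorphism of $N$ fixing $T$ (a composition of meridional Dehn twists), which produces the asserted diffeomorphism. This is genuinely weaker than an isotopy, which is what accounts for the ``diffeomorphic'' rather than Legendrian-isotopic conclusion.

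For part (2) I would specialize to integer slope $n$ and track Euler characteristics. Here the tight-compatible signed arc configurations on $D$ are precisely the two extreme ones, in which all $n$ peripheral regions carry a common sign while the single central region carries the opposite sign; these give $r(\mu)=\chi(D_+)-\chi(D_-)=\pm(n-1)$, and the unique value $0$ when $n=1$, where $D$ has a single dividing arc. Realizability of both signs is then immediate from the explicit tight models $\zeta_n^{\pm}$ constructed on the solid torus above: each is tight because it is covered by $(Z,\tilde\xi)$, and its characteristic foliation $\mathcal F_n$ on the meridian disk is exactly one of the two extreme configurations. The hard part throughout is the second and fourth steps in combination: proving that tightness, together with the standard-form boundary, really confines the signed dividing set of $D$ to these few configurations --- in particular that the intermediate labellings are excluded for integer slope --- and that Eliashberg's ball uniqueness upgrades this combinatorial rigidity to a boundary-fixing diffeomorphism. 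This is exactly where Honda's bypass and continued-fraction bookkeeping, the non-isolating Legendrian realization principle, and Giroux's flexibility must be deployed with care.
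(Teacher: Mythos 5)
The paper does not prove this theorem: it is stated explicitly as ``an extract from Giroux--Honda's classification of tight contact structures on the solid torus'' and cited to \cite{[Gi3]} and \cite{[Ho1]}. So there is no internal proof to compare against; the only question is whether your sketch would stand on its own. Your route --- convexify $\p N$ and a meridian disk, use Giroux's criterion to force $\Gamma_D$ to be a union of $\p$-parallel arcs, cut and round to reduce to Eliashberg's uniqueness on the tight ball, and read off $r(\mu)=\chi(D_+)-\chi(D_-)$ --- is exactly the strategy of the cited sources, and each of those individual assertions is correct (including $tb(\mu)=-\tfrac12\#(\mu\cap\Gamma_T)$ and the identification of the two extreme configurations with $\zeta_n^{\pm}$ and $r(\mu)=\pm(n-1)$).

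The genuine gap is the one you name yourself and then do not close. For $2n$ endpoints on $\p D$ there are Catalan-many chord diagrams, and the claim that tightness of $(N,\xi)$ (not merely of a neighborhood of $D$) excludes every configuration except the two outermost ones is the actual content of part (2); it requires Honda's bypass argument (a nested or ``imbalanced'' arc yields a bypass whose attachment changes the boundary slope through the Farey tessellation until a slope-$0$, i.e.\ meridional, dividing set appears, producing an overtwisted disk). Likewise, in part (1) the assertion that two configurations with equal slope and equal $r(\mu)$ ``differ by a reshuffling of the arcs induced by meridional Dehn twists'' is not justified: the uniqueness statement is proved in \cite{[Ho1]} by factoring $N$ into basic slices and showing that shuffles within continued-fraction blocks preserve the isotopy class, not by an explicit mapping-class argument on $D$. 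As written, your proposal is a correct reduction of the theorem to its hardest step rather than a proof of it; to be complete you would either have to carry out the bypass/continued-fraction analysis or, as the paper does, cite Giroux and Honda for the whole statement.
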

Note that the contact structures whose existence is claimed in \ref{thm:Giroux-Honda}.2 must by \ref{thm:Giroux-Honda}.1 be the contact structures $\zeta_n^\pm$ explicitely constructed in step 4 above.
\medskip 

\noindent{\bf 4. Proof of Theorem \ref{thm:except}.}
Let $L$ be an exceptional Legendrian knot in a contact  $(S^3,\xi)$.
Then, according to Lemma \ref{lem:someloose}.2 we have $tb(L)=n>0$.  Let $r = r(L)$. Consider the standard neighborhood $U_n\supset L$. Then,  according to  Lemma \ref{lm:complement}.1 the boundary $T=\p U_n$ is ruled by Legendrian curves in the class of the meridian of the complementary torus $N^c=S^3\setminus \Int U_n$, and using Lemma \ref{lm:complement}.2  we conclude that the rotation number of any Legendrian meridian is equal to $-r(L)=-r$. 
Hence, Giroux-Honda's Theorem \ref{thm:Giroux-Honda} implies that $r=\pm(n-1)$ and that there exists a contactomorphism  $(Q,\zeta_n^\pm)\to (N^c,\xi|_{N^c})$, where the sign  is the {\it same} as the sign of $r$.
It remains to compute the Hopf invariant of the constructed contact structure
$$(S^3,\wt\xi_n^\pm)=(U_n,\xi|_{U_n})\mathop{\cup}\limits_f(Q,\zeta_n^\pm).$$ We do below the computation for the case of $r=1-n$. The case of the positive rotation number is similar.\footnote{In fact, the positive rotation case formally follows from the negative: if $L$ is a Legendrian unknot with $(tb,r)=(n,1-n)$, then the same knot with the opposite orientation has $(tb,r)=(n,n-1)$.}

 Let us choose a reference  vector field  $\bv\in TS^3$ as follows.
 On $U_n$ we take $\bv$ to be equal to the vector field $X\in\xi$ constructed in Lemma \ref{lm:ruling}.
  Note that on the boundary $\p Q=\p D\times S^1$, we have  $X=-\mu$  (we continue to use the notation introduced in the proof of Lemma \ref{lm:complement}.1), i.e. $X$
  is  tangent to the meridians $\p D\times x$, $x\in S^1$, of 
 $Q$, but determines their opposite orientation. Next, we extend $\bv$ to a small neighborhood  $\Omega=\{1-\sigma\leq \rho\leq 1\}$ of
  the boundary $T=\p Q=\{\rho=1\}$, where $\rho\in[0,1]$ is the radial coordinate on the disk $D$. We  extend $\bv$ to $\Omega$ as tangent to tori $\{\rho=\const\}$  and rotating  clockwise (with respect to the   orientation of   $T=\p Q$)
  from $-\mu$ to the vector field $e_x$ directing the singularity curves, 
  as $\rho$ decreases from $1$ to $1-\sigma$. Note that the vector field $e_x$ on $T$ coincides with the contact vector field $\bt$ constructed in Step 2 on Q. Hence, we can extend
   $\bv$ to the rest of $Q$ as  equal to $\bt$ elsewhere. 
  It is straightforward to check that the vector field $\bv$ thus defined is homotopic to the 
    basic vector field $ip\in TS^3$ of the chosen framing
  $(ip,jp,kp)$ of $TS^3$.
 
 Choose now a Riemannian metric on $S^3$ in the following special way. Let us recall that   $Q$ is the quotient of $D\times \R$ by a map $\phi:D\times \R$ which is the composition of translation by $1$ and rotation by $\frac{2\pi}n$. The map $\phi$ is an isometry of the standard Euclidean product metric on $D\times S^1$, and hence $Q$ inherits the quotient metric. We extend this metric arbitrarily to the complement $U_n=S^3\setminus Q$. Let us denote by $\bw$ the vector field normal to the contact plane field $\wt\xi$.
 By definition the Hopf invariant $h(\wt\xi)$ is the linking number of appropriately oriented curves
 $\Gamma_\pm=\{\bw=\pm\bv\}$.
 First, note that $\Gamma_\pm\cap U_n=\varnothing$. Indeed, in $U_n$ the contact vector field  $\bv$ is tangent to $\xi$ while $\bw$ is orthogonal to $\xi$. We also have $\Gamma_\pm\cap \Omega=\varnothing$. Indeed, let us lift all the objects to the universal cover $D^2\times S^1$.   The vector field $bt$ is vertical,  the background metric
 is the Euclidean product metric, and the contact structure, as well as  the  lifted vector  fields  $\bv$ and $\bw$ are invariant with respect to translations along the vertical axis. The only possible points of $\Omega$  where we could have   $\bv=\pm\bw$ are along separatrices, which connect radially (i.e. in the $\partial_\rho$-direction) to hyperbolic boundary points.
  Indeed,  only along the separatrices  both vector fields are tangent to concentric tori $\{\rho=\const\}$. However, as $\rho$ decreases from $1$ to $1-\sigma$, both vector fields rotate clockwise, $\bv$ rotates $-\frac\pi2$ from  $-\mu$ to $e_x$, while  
 $\bw$ rotates by a small angle away from $\pm e_x$. In both cases we cannot have $\bv=\pm\bw$ anywhere in $\Omega$. Finally, inside $Q\setminus\Omega$  the curves
 $\Gamma_+$  and $\Gamma_-$ coincide with the locus of  respectively positive or negative elliptic singular points of  characteristic foliations on discs $D\times x,x\in S^1$. Hence, $\Gamma_+$ is the core circle of $Q$, while $\Gamma_-$ is isotopic in the complement of $\Gamma_+$ to  the core circle $L$ of  $U_n$.  
  The analysis of the Hopf map near an elliptic point, shows that the orientation induced by the Hopf map on
   $\Gamma_\pm$ is the same as defined by the coorientation of the contact structure.  In other words, $\Gamma_+$ is oriented by $\bv$, while $\Gamma_-$ is oriented by $-\bv$, Therefore,   $\lk(\Gamma_+,\Gamma_-)=\lk(-L,\Gamma_+)=-1$.  \qed

\bigskip
 The exceptional knots $K_n^\pm$ with $(tb,r)=(n,\pm(n-1))$ in $\xi_{-1}$  can be explicitly exhibited in the contact  $(S^3,\xi_{-1})$, see \cite{[D1]}.
 Namely, let us view $S^3$ as a quotient  space of the solid torus $D^2\times S^1$ where  each longitude  $x\times S^1$ $x\in \p D^2$ on its boundary is  collapsed to a point. Consider a  contact structure $\xi$ given in cylindrical coordinates on $T$ by a 1=form $\cos f(r)dz+\sin f(r)d\theta$, where the function $f(r)$ has the following properties:
 $f$ is monotone function with $f(0)=f'(0)=0, f'(1)=0$ and $f'>0$ on $(0,1)$. Then, if $f(1)=\frac\pi 2$ then the contact structure is tight, and if $f(1)=\frac{3\pi}2$  the contact structure $\xi$ is isomorphic to $\xi_{-1}$.
 Consider the latter case.
 There are exactly two values $r_0,r_1\in(0,1)$, $r_0<r_1$ such that $\tan f(r_0)=-n, \tan f(r_1)=-\frac 1n$.
 Then the tori $T_{r_0}=\{r=r_0\}$ and   $T_{r_1}=\{r=r_1\}$ are foliated by Legendrian knots with $tb=n$ and $r=\pm(n-1)$, where the sign of the rotation number depends on the orientation of the knots.
 K. Dymara explicitely verified in \cite{[D1]} that these knots are exceptional. She also conjectured that two  Legendrian  exceptional Legendrian knots with the same $(tb,r)$, one on $T_{r_0}$ and the other on $T_{r_1}$  are not Legendrian isotopic. By Theorem~\ref{thm:except} they are, of course, coarsely equivalent.
 
\subsection{Coarse classification vs. Legendrian isotopy}\label{sec:coarse-isotopy} 
\subsubsection{Legendrian knots with negative $tb$}
\begin{Prop}\label{prop:coarse-isotopy}
Let $L_1,L_2$ be two topologically trivial Legendrian knots in an overtwisted contact manifold $(M,\xi)$ with the same values of $tb, r$. Suppose that  $tb(L_1)(=tb(L_2))< 0$. Then $L_1$ and $L_2$ are Legendrian isotopic.
\end{Prop}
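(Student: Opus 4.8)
The plan is to exploit the fact that, since $tb(L_1)=tb(L_2)<0$, part (b) of Lemma~\ref{lem:someloose} forces both $L_1$ and $L_2$ to be loose. By Lemma~\ref{lem-gray} it suffices to produce an ambient contact isotopy of $(M,\xi)$ carrying $L_1$ to $L_2$, and I will do this by showing that each $L_i$ is Legendrian isotopic to one and the same \emph{standard model}: the Legendrian lift $L_0$ of the catalog front with invariants $(tb,r)=(tb(L_1),r(L_1))$, placed inside a Darboux ball of $M$. Since $L_0$ depends only on $(tb,r)$, which $L_1$ and $L_2$ share, this gives $L_1\sim L_0\sim L_2$. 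Note that the coarse counterpart of this statement is already available from the corollary to Proposition~\ref{prop:loose-knots}, so the real content is to upgrade that coarse equivalence to a genuine Legendrian isotopy.

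First I would reduce each knot to the model inside a tight ball. Choose a spanning disk $D_i$ for $L_i$; because $tb(L_i)<0$, the first part of the argument of Lemma~\ref{lem-stdize} puts $\partial D_i=L_i$ into normal absorbing form using only the $2\pi$-twist count and Lemma~\ref{lem-convert}, with no appeal to tightness. Looseness provides an overtwisted disk $\Delta_i\subset M\setminus L_i$, which after a $C^0$-small perturbation I may take disjoint from $D_i$. The goal is then to standardize the interior foliation of $D_i$ by Lemmas~\ref{lem-stdize} and~\ref{lem-ellForm}, so that a neighborhood $B_i$ of $D_i$ becomes a tight ball with $L_i$ a catalog unknot inside it, exactly as in the proof of part (b) of Lemma~\ref{lem:someloose}. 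The elimination and reduction steps there use tightness only to forbid limit cycles, and the idea is that any limit cycle obstructing the standardization bounds an overtwisted disk, which can be pushed into the region of $\Delta_i$ using Eliashberg's classification of overtwisted structures (Theorem~\ref{thm:overtwisted}). Once $D_i$ is in elliptic form, $B_i$ is contactomorphic to a ball in standard $\R^3$, so Theorem~\ref{thm:main} applies inside $B_i$ and supplies a Legendrian isotopy, supported in $B_i$ and hence extended by the identity to all of $M$, carrying $L_i$ to $L_0$.

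The main obstacle is precisely this standardization-in-a-tight-ball step, equivalently the upgrade of coarse equivalence to a contactomorphism that is contact-\emph{isotopic} to the identity. This is where the hypothesis $tb<0$ is indispensable rather than merely a way of guaranteeing looseness: the group of coorientation preserving contactomorphisms of an overtwisted manifold is disconnected, so a coarse equivalence need not be realizable by an ambient contact isotopy, and the discrepancy is a monodromy living in $\pi_1$ of the space of cooriented plane fields (a $\Z/2$ in the case of $S^3$). When $tb<0$ the knot carries a loose chart — concretely, it is a stabilization — and the overtwisted disk $\Delta_i$ disjoint from $L_i$ can be used to realize this monodromy by an explicit Legendrian isotopy that absorbs one extra full rotation of the framing, thereby trivializing the difference between coarse equivalence and Legendrian isotopy.

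In carrying this out I would also need to verify that the relative homotopy obstruction between the two competing plane fields on the knot complement vanishes, which is where equality of the rotation number $r$, and not just of $tb$, is used: together $tb$ and $r$ determine the formal Legendrian data, and their equality makes the relative class trivial so that Theorem~\ref{thm:overtwisted} can be applied rel a neighborhood of the knot. I expect the monodromy-killing move and this obstruction computation to be the technical heart; the remainder is a direct application of the tight Main Theorem~\ref{thm:main} together with the relative form of the overtwisted classification.
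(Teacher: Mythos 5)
Your overall plan (show each $L_i$ is Legendrian isotopic to the catalog model $L_0$ placed in a Darboux ball) matches the paper's, but the way you try to place $L_i$ inside a tight ball has a genuine gap. You propose to standardize the interior foliation of a spanning disk $D_i$ directly in the overtwisted manifold, and to deal with the failure of tightness by ``pushing'' any obstructing limit cycle into the region of the overtwisted disk $\Delta_i$. This is not an argument: a limit cycle of $(D_i)_\xi$ is an intrinsic feature of the surface $D_i$, and relocating a disjoint overtwisted disk does nothing to the characteristic foliation of $D_i$; nor is there any guarantee that such a limit cycle can be removed by an isotopy of $D_i$ rel $L_i$. Lemma~\ref{lem-stdize} genuinely requires tightness (both the basin argument and the exclusion of limit cycles use it), and for a loose knot the complement of $L_i$ is overtwisted, so this route is blocked. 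Likewise, the ``monodromy-killing move'' you invoke to upgrade a coarse equivalence to a Legendrian isotopy is never made precise; the remark about absorbing an extra rotation of the framing does not engage with the actual obstruction, which lives in $\pi_0$ of the contactomorphism group.

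The paper sidesteps both difficulties. It takes the model $L_0$ inside a small Darboux ball and applies Proposition~\ref{prop:loose-knots} to obtain a \emph{global} contactomorphism carrying $L_0$ to $L_i$; the image of the Darboux ball is then automatically a tightly embedded ball containing $L_i$ as a standard unknot, with no disk standardization needed. The upgrade from coarse equivalence to Legendrian isotopy is then achieved not by analyzing the contactomorphism group but by invoking the connectedness of the space of embeddings of the standard tight contact $3$-ball into any contact manifold: the balls containing $L_1$ and $L_2$ can be contact-isotoped onto one another, after which Theorem~\ref{thm:main} applied inside the common tight ball finishes the proof. If you replace your disk-standardization step and your monodromy discussion by these two observations, your argument becomes the paper's.
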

\begin{proof}
According to Lemma \ref{lem:someloose}(a) the knots $L_1$ and $L_2$ are loose.
There exists a  Legendrian knot $L_0$ with the same $tb$ and $r$ which is contained
in a small Darboux ball. Proposition \ref{prop:loose-knots} then implies that all three knots, $L_0, L_1$ and  $ L_2$ are coarsely equivalent. 
Hence, $L_1$ and $L_2$ are contained in neighborhoods contactomorphic to the standard tight contact $3$-ball.   The space of embeddings of the tight $3$-ball in any contact manifold is connected, and hence the claim follows from Theorem \ref{thm:main} in the tight case.
\end{proof}
 
\subsubsection{Topology of the contactomorphism group of an overtwisted contact manifold}

Let $(M,\xi)$ be a connected  contact manifold. $M$ can be either closed, or non-compact. In the latter case all diffeomorphisms we consider are assumed to be with compact support. Similarly homotopies of contact structures and plane fields are always assumed to be  fixed at infinity.
Let  us denote by
\begin{itemize}
\item[--] $\Diff_0(M)$  the identity component of the group of compactly supported diffeomorphisms;
\item[--] $\Diff_0(M,\xi)$
the subgroup of $\Diff_0(M)$ consisting of those diffeomorphisms preserving the contact structure $\xi$ and its co-orientation;
 
\item[--] $\Distr(M|\xi)$ and  $\Cont(M|\xi)$ the spaces of respectively plane fields and contact structures  on  $M$ which coincide with $\xi$ at infinity if $M$ is non-compact;
\item[--] $\Distr_0(M|\xi)$ and $\Cont_0(M|\xi)$ the connected components of $\xi$ in $\Distr(M|\xi)$ 
and $\Cont(M|\xi)$, respectively. 
\end{itemize}

We recall: \nopagebreak 
\begin{Thm}[See \cite{[E5]}]\label{thm:overtwisted}
\begin{enumerate}
\item
If $\xi$ is overtwisted, then the group  $\Diff_0(M)$ acts transitively on $\Cont_0(M,\xi)$. 
\item
If $M$ is non-compact and $\xi$ is overtwisted at infinity then the inclusion $$j:\Cont_0(M|\xi)\hookrightarrow\Distr_0(M|\xi)$$ is a homotopy equivalence.
\end{enumerate}
\end{Thm}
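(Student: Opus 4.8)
The plan is to reduce both assertions to a single relative, parametric h-principle for overtwisted contact structures, whose flexibility is seeded by a fixed overtwisted disk near infinity. Part~(1) I would dispose of as essentially Gray's stability theorem (recalled in Section~\ref{sec:contact-basic}): by definition $\Cont_0(M,\xi)$ is the path component of $\xi$ in the space of contact structures, so any $\xi'$ in it is joined to $\xi$ by a path $\xi_t$ of contact structures; integrating the associated Gray vector field yields an ambient isotopy $\phi_t\in\Diff_0(M)$ with $(\phi_1)_*\xi=\xi'$, which is exactly transitivity of the action. (In the non-compact case one runs the same argument with compactly supported isotopies, using that the path is fixed at infinity.) Overtwistedness is not actually needed here; the genuine content of \cite{[E5]}, and the engine of the classification, is part~(2).

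For part~(2) I would prove that $j$ is a weak homotopy equivalence by verifying the standard criterion: every map of pairs $(D^k,\partial D^k)\to(\Distr_0(M|\xi),\Cont_0(M|\xi))$ can be deformed, rel $\partial D^k$, into $\Cont_0(M|\xi)$. Unwound, this is the relative parametric problem: given a family $\{\eta_s\}_{s\in D^k}$ of plane fields, all equal to $\xi$ at infinity and genuinely contact for $s\in\partial D^k$, homotope the family rel $\partial D^k$ and rel infinity to a family of contact structures. Since both spaces are function spaces of CW homotopy type, solving this for all $k$ upgrades the weak equivalence to an honest homotopy equivalence.

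I would solve the relative parametric problem in two movements. First, \emph{normalize near infinity}: since $\xi$ is overtwisted at infinity, fix a standard overtwisted ball $B_\infty$ outside a large compact set, arrange each $\eta_s$ to agree there with a fixed overtwisted model, and treat $B_\infty$ as a reservoir of flexibility. Second, run a \emph{handle induction}: choosing a handle decomposition of the compact region where the $\eta_s$ fail to be contact, extend the contact condition over successive skeleta in families over $D^k$. The low-dimensional extensions are controlled by the open, generic character of the contact condition along Legendrian curves and over surfaces and cause no trouble; everything hinges on the top-dimensional step.

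The hard part will be the local extension lemma driving the induction: given a contact structure already defined near $\partial B$ for a ball $B$, together with a prescribed plane-field extension over $B$, deform that extension rel $\partial B$ to an honest contact structure in the prescribed homotopy class --- and do so in families, rel the boundary parameter. The obstruction to such an extension is precisely what the overtwisted disk is built to annihilate: by piping a copy of the model from $B_\infty$ into $B$ and performing the explicit spinning/Lutz-type constructions of \cite{[E5]}, one realizes every formal homotopy class over $B$ by a genuine contact structure. Checking that this can be carried out parametrically and relative to boundary data --- that is, that overtwisted contact structures are truly flexible --- is the technical heart of the matter and the step I expect to be the main obstacle; once it is in place, the handle induction and the passage from the parametric h-principle to the homotopy equivalence of part~(2) are formal.
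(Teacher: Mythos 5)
There is nothing in the paper to compare your argument against: Theorem~\ref{thm:overtwisted} is not proved here at all, it is quoted from \cite{[E5]}, so your proposal has to be measured against that source and against how the theorem is actually used in Section~\ref{sec:twist}. On part~(1) your Gray-stability argument does prove the statement \emph{as literally written} (transitivity of $\Diff_0(M)$ on a path component of the space of contact structures), and you are right that overtwistedness is irrelevant to that reading --- but that reading is not the one the paper needs. In the proof of Proposition~\ref{prop:loose-knots} the hypothesis available is that $\xi_2$ and $df(\xi_1)$ are homotopic \emph{as plane fields} rel a neighborhood, and the conclusion drawn is an isotopy of contact structures; that implication is the actual content of the classification in \cite{[E5]} (plane-field homotopy of overtwisted structures upgrades to contact homotopy), and Gray's theorem gives you none of it. So your disposal of part~(1) proves a weaker statement than the one being invoked, and the overtwisted hypothesis you discard is exactly where the theorem lives.

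On part~(2) your outline --- normalize near a fixed overtwisted ball at infinity, run a handle induction, and reduce everything to a relative \emph{parametric} extension lemma over a ball --- is indeed the architecture of \cite{[E5]}, but the entire theorem is contained in the lemma you defer, and your closing claim that once it is in place ``the passage \dots{} is formal'' is contradicted by the paper itself: Remark~\ref{rem:warning} states explicitly that \cite{[E5]} carries out the argument only for $1$-parameter families, which yields an isomorphism on $\pi_0$ and an epimorphism on $\pi_1$ but \emph{not} the full homotopy equivalence asserted in part~(2). In other words, the parametric version of your local extension lemma is not a routine technicality that was ``checked'' in \cite{[E5]}; it is the precise point at which the published argument stops short. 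A correct write-up must either supply the genuinely parametric construction (relative in both the ball and the disk of parameters) or weaken the conclusion of part~(2) to what the $1$-parametric argument actually delivers --- which, as the paper notes, is all that is needed for the applications in Section~\ref{sec:twist}.
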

\begin{Rem}\label{rem:warning}
 {\rm
Though the homotopy equivalence   is   stated in \cite{[E5]},   the proof there is given only for 1-parametric families, which only implies an isomorphism on $\pi_0$ and an epimorphism on $\pi_1$.}
\end{Rem}

 Thus, for any overtwisted $\xi$ the evaluation map $f\mapsto f_*\xi$ defines a Serre fibration 
 $\pi:\Diff_0(M)\to \Cont_0(M|\xi)$, with the fiber $\Diff_0(M,\xi)$. If $M$ is not compact and  $\xi$ is overtwisted at infinity,
 \footnote{Note that a complement of an   overtwisted disc in its arbitrarily small neighborhood is overtwisted, hence a complement of an overtwisted disc in any manifold is overtwisted at infinity.} then the base $\Cont_0(M|\xi)$
 of this fibration is homotopy equivalent to $\Distr_0(M|\xi)$.
 
 \begin{Cor}[comp. Dymara,\cite{[D2]}]\label{cor:Dymara}
 \begin{enumerate}
 \item  The classifying space \linebreak $\mathrm{B}\Diff_0(\R^3,\xi )$ is homotopy equivalent to $\Cont_0(\R^3|\xi)\simeq
 \Distr_0(\R^3|\xi)\simeq \Map(S^3,S^2)$
 for any overtwisted at infinity contact structure $\xi$ on $\R^3$. Here 
 we denote by $\Map(S^3,S^2)$ the space of based maps $S^3\to S^2$. In particular, 
 $$\pi_0(\Diff_0(\R^3,\xi))=
 \pi_1(\mathrm{B}\Diff_0(\R^3,\xi))=\pi_4(S^2)=\Z_2\,.$$
 \item The coarse classification of Legendrian knots in $(\R^3,\xi)$ coincides with their classification up 
 to Legendrian isotopy.
\end{enumerate}
\end{Cor}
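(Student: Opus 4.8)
The plan is to read off both statements from the Serre fibration $\Diff_0(\R^3,\xi)\hookrightarrow\Diff_0(\R^3)\xrightarrow{\pi}\Cont_0(\R^3|\xi)$ constructed above, together with two external inputs: the contractibility of $\Diff_0(\R^3)$ (\cite{[Hat]}) and the homotopy equivalence $\Cont_0(\R^3|\xi)\simeq\Distr_0(\R^3|\xi)$ of Theorem~\ref{thm:overtwisted}.2. For part~(1), I regard $\pi$ as a principal $\Diff_0(\R^3,\xi)$-bundle; since its total space is contractible it is a universal bundle, so the base is a classifying space, $\Cont_0(\R^3|\xi)\simeq\mathrm{B}\Diff_0(\R^3,\xi)$, and Theorem~\ref{thm:overtwisted}.2 identifies the base with $\Distr_0(\R^3|\xi)$. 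Next I would identify the full space $\Distr(\R^3|\xi)$ with the based mapping space $\Map(S^3,S^2)=\Omega^3S^2$: fixing the framing $(ip,jp,kp)$ trivializes $T\R^3$, so a cooriented plane field is the same as its unit cooriented normal, a map $\R^3\to S^2$; agreeing with $\xi$ at infinity makes this map constant near infinity, hence a based map on the one-point compactification $S^3$. As $\Omega^3S^2$ is a loop space, all of its path components are homotopy equivalent, and $\Distr_0$ is one of them.

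The computation is then purely homotopy-theoretic. The long exact sequence of $\pi$, with contractible total space, gives $\pi_0(\Diff_0(\R^3,\xi))\cong\pi_1(\Cont_0(\R^3|\xi))=\pi_1(\mathrm{B}\Diff_0(\R^3,\xi))$, and the identifications above yield $\pi_1(\Cont_0(\R^3|\xi))\cong\pi_1(\Distr_0)=\pi_1(\Omega^3S^2)=\pi_4(S^2)=\Z_2$. Here I would flag the caveat of Remark~\ref{rem:warning}: the cited argument only gives an epimorphism on $\pi_1$, so I am using the full homotopy equivalence of Theorem~\ref{thm:overtwisted}.2 as stated in order to get the isomorphism.

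For part~(2), one implication is immediate: a Legendrian isotopy extends to an ambient contact isotopy by Lemma~\ref{lem-gray}, so Legendrian isotopic knots are coarsely equivalent. For the converse, note that since compactly supported diffeomorphisms of $\R^3$ form the connected group $\Diff_0(\R^3)$, coarse equivalence is precisely equivalence under $G:=\Diff_0(\R^3,\xi)$, whereas Legendrian isotopy is equivalence under its identity component $G^0$ (again by Lemma~\ref{lem-gray}). By part~(1), $\pi_0(G)=\Z_2$, so the coarse class of a Legendrian knot $L$ consists of at most two Legendrian isotopy classes, interchanged by the generator $g_0$ of $\pi_0(G)$; the two coincide exactly when $g_0(L)$ is Legendrian isotopic to $L$. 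Since any two representatives of the nontrivial class differ by an element of $G^0$, this property is independent of the chosen $g_0$, so it suffices to produce one representative that fixes $L$. I would obtain it by localization: because $\xi$ is overtwisted at infinity, choose an overtwisted ball $B'$ disjoint from the compact knot $L$. The generator of $\pi_4(S^2)$ is represented by a based map $S^4\to S^2$ supported in a ball, hence the corresponding generator of $\pi_1(\Distr_0)$ is represented by a loop of plane fields equal to $\xi$ off $B'$. Deforming this loop to a loop of contact structures supported in $B'$ and lifting it through $\pi$ produces a contact isotopy supported in $B'$ whose time-one map $g_0$ represents the nontrivial class; as $B'\cap L=\varnothing$ we have $g_0(L)=L$, which finishes the argument.

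The hard part is precisely this last localization. Keeping the loop of contact structures equal to $\xi$ outside $B'$ requires a relative, one-parameter version of Eliashberg's classification \cite{[E5]} on $B'$ rel $\partial B'$ — exactly the kind of statement Remark~\ref{rem:warning} warns is only partially established — so the delicate point is to ensure the plane-field loop can be deformed to a genuine loop of contact structures without disturbing it near $L$. Everything else reduces to the exact sequence of the fibration and the standard homotopy theory of $\Omega^3S^2$.
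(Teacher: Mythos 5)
Your argument is correct and follows the paper's proof in all essentials: part (1) is the same computation via the Serre fibration with contractible total space (Hatcher) and the identification $\Cont_0(\R^3|\xi)\simeq\Distr_0(\R^3|\xi)\simeq\Map(S^3,S^2)$, and part (2) rests on the same idea of localizing the nontrivial component of $\Diff_0(\R^3,\xi)$ in a ball disjoint from the knot. The localization step you flag as the hard part is handled in the paper more simply: one applies part (1) directly to the open ball $\Int B'$ (itself an overtwisted-at-infinity contact $\R^3$, so Theorem~\ref{thm:overtwisted}.2 applies verbatim with compact supports rather than any rel-$\partial B'$ version) to produce a contactomorphism supported there which is nontrivial in $\pi_0$, and then uses additivity of the resulting $\Z_2$-invariant for contactomorphisms with disjoint supports to correct the given coarse equivalence into the identity component; the $\pi_1$-caveat of Remark~\ref{rem:warning} affects both write-ups equally.
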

 \begin{proof} 
The statement \ref{cor:Dymara}.1 follows from  the fact
that  the homotopy classes of plane fields on a 3-manifold $M$  coincide with homotopy classes of maps $M\to S^2$, and from
Hatcher's theorem \cite{[Hat]}
that  the group $\Diff_0(\R^3)$ is contractible. 

 To prove  \ref{cor:Dymara}.2 let us consider a contactomorphism $f\in\Cont_0(\R_3,\xi)$ which maps one of two coarsely equivalent Legendrian knots $L_1$ and $L_2$ onto the other one.
 According to  \ref{cor:Dymara}.1 there exists exactly two connected components of $\Cont_0(\R_3,\xi)$.
 Suppose that $f$ does not belong to  the identity component.  Take a ball $B\subset \R^3\setminus \supp f$ such that the contact structure $\xi|_{\Int B}$ is overtwisted at infinity. Then again applying  \ref{cor:Dymara}.1
 we construct a contactomorphism $g\in\Diff_0(\Int B,\xi)$ which is not isotopic to the identity in 
 $\Diff_0(\Int B,\xi)$. Consider a diffeomorphism $\widetilde f\in \Diff_0(\R^3,\xi)$ which is equal to $f$ outside $B$, and equal to $g$ on $B$. Then $\widetilde f$ is a coarse equivalence between $L_1$ and $L_2$, as was $f$.
However, the $\Z_2$-invariant associated by  \ref{cor:Dymara}.1 with a contactomorphism  is additive for contactomorphisms with disjoint support, and hence it is trivial for $\widetilde f$.
 Thus,  $\widetilde f$  is isotopic to the identity inside the group $\Diff_0(\R^3,\xi)$, and in particular, $L_1$ and $L_2$ are Legendrian isotopic.
 \end{proof}
 
 \begin{Rem}{\rm
 \begin{enumerate}
 \item Using  \ref{cor:Dymara}.1 and some algebraic topology, one can show (see \cite{[D3]}) that 
  for any overtwisted contact manifold  $(M,\xi)$ the group $\Diff_0(M,\xi)$ is disconnected. 
  \item Yu Chekanov informed us that he proved  that for an overtwisted contact structure $\xi_n$ on $S^3$ with the Hopf invariant $n$  one has
    \begin{equation*}
  \pi_0(\Diff_0(S^3,\xi_n))=
  \begin{cases}
  \Z_2\oplus\Z_2&\hbox{if}\; n=-1;\cr
  \Z_2 &\hbox{otherwise}.
  \end{cases}
  \end{equation*}
  \end{enumerate}
  }
\end{Rem}


\end{document}